\theoremstyle{plain}
\newtheorem{thm}{Theorem}[section]
\newtheorem{lemma}[thm]{Lemma}
\newtheorem{remark}{Remark}[section]
\theoremstyle{remark}
\newcommand{\bbV}{{\bf V}}
\newcommand{\bbU}{{\bf U}}
\newcommand{\bbD}{{\bf D}}
\newcommand{\bbA}{{\bf A}}
\newcommand{\bbX}{{\bf X}}
\newcommand{\bbP}{{\bf P}}
\newcommand{\bbZ}{{\bf Z}}
\newcommand{\bbx}{{\bf x}}
\newcommand{\bbS}{{\bf S}}
\newcommand{\bbB}{{\bf B}}
\newcommand{\bSi}{\pmb \Sigma}
\newcommand{\bbalp}{{\bf \alpha}}
\newcommand{\bgl}{{\bf \lambda}}
\newcommand{\bGma}{{\bf \Gamma}}
\newcommand{\bPsi}{{\bf \Psi}}
\newcommand{\bgO}{{\bf \Omega}}
\newcommand{\bUps}{{\bf \Upsilon}}
\newcommand{\bbI}{{\bf I}}
\newcommand{\bbT}{{\bf T}}
\newcommand{\bqn}{\begin{eqnarray*}}
	\newcommand{\eqn}{\end{eqnarray*}}
\newcommand{\rtr}{{\textrm{tr}}}
\newcommand{\rdiag}{{\textrm{diag}}}
\newcommand{\bqa}{\begin{eqnarray}}
	\newcommand{\eqa}{\end{eqnarray}}
\newcommand{\al}{\alpha}
\newcommand{\CYRS}{\CYRS}
\begin{document}

\begin{frontmatter}
\title{A CLT for the LSS of large dimensional sample covariance matrices with unbounded dispersions}
\runtitle{CLT for LSS with unbounded dispersions}

\begin{aug}
\author{\fnms{Zhijun} \snm{Liu}\ead[label=e1,mark]
{liuzj037@nenu.edu.cn}},
\author{\fnms{Jiang} \snm{Hu}\ead[label=e2,mark]{huj156@nenu.edu.cn}},
\author{\fnms{Zhidong} \snm{Bai}\ead[label=e3,mark]{baizd@nenu.edu.cn}},
\and 
\author{\fnms{Haiyan}
\snm{Song}\ead[label=e4,mark]{songhy716@nenu.edu.cn}}
\address{KLASMOE and School of Mathematics and Statistics, Northeast Normal University, China.
\printead{e1,e2,e3,e4}}

\end{aug}

\begin{abstract}
In this paper, we establish the central limit theorem (CLT) for linear spectral statistics (LSS) of large-dimensional sample covariance matrix when the population covariance matrices are not uniformly bounded, which is a nontrivial extension of the Bai-Silverstein theorem (BST) (2004). The latter has strongly stimulated the development of high-dimensional statistics, especially the application of random matrix theory to statistics. However, the assumption of uniform boundedness of the population covariance matrices is found strongly limited to the applications of BST. The aim of this paper is to remove the blockages to the applications of BST. The new CLT, allows the spiked eigenvalues to exist and tend to inﬁnity. It is interesting to note that the roles of either spiked eigenvalues or the bulk eigenvalues or both of the two are dominating in the CLT.

Moreover, the results are checked by simulation studies with various population settings. The CLT for LSS is then applied for testing the hypothesis that a covariance matrix $ \bSi $ is equal to an identity matrix.  For this, the asymptotic distributions for the corrected likelihood ratio test (LRT)
and Nagao's trace test (NT) under alternative are derived, and we also propose the asymptotic power of LRT and NT under certain alternatives. 



\end{abstract}

\begin{keyword}[class=MSC]
\kwd[Primary ]{	60B20}
\kwd[; secondary ]{60F05}
\end{keyword}

\begin{keyword}
\kwd{Empirical spectral distribution}
\kwd{linear spectral statistic}
\kwd{random matrix}
\kwd{Stieltjes transform}
\end{keyword}
\end{frontmatter}

\section{Introduction}

Consider the general sample covariance matrix  $ \bbB_n=\frac{1}{n}\bbT_p\bbX_n\bbX_n^{\ast}\bbT_p^{\ast} $, where $ \bbX_n $ is a $ p\times n $ matrix with independent and identically distributed (i.i.d.) entries, $ \bbT_p $ is a $p \times p$ deterministic matrix, $\bbT_p\bbX_n$ can be considered a random sample from the population with the population covariance matrix $\bbT_p\bbT_p^{\ast}=\bSi$, and $^*$ represents
the complex conjugate transpose. In this sequel, we will simply write $\bbB\equiv\bbB_n$, $\bbT\equiv\bbT_p$ and $\bbX\equiv\bbX_n$ when there is no confusion. $ \lambda_{1},\lambda_{2},\ldots,\lambda_{p} $ denotes the eigenvalues of $\bbB$. For a known kernel function $f$, we call $\sum_{j=1}^{p}f\left( \bgl_{j}\right) $ the linear spectral statistic (LSS) of $\bbB$.  As most of the classical test statistics in multivariate statistical analysis are associated with the eigenvalues of sample covariance matrices,  LSSs are remarkable tools in many statistical problems (see  \cite{Anderson03I,YaoB15L} for details). By extensively studying high-dimensional data, 
it was found that the performances of the LSSs are significantly different between low dimensions and high-dimensional data.
For example, under the low-dimensional setting, Wilks' theorem (see \cite{Wilks38L}) provides the $\chi^2$ approximation for the likelihood ratio statistics (LRT), which is a kind of LSS. However,  when $ p $ is large compared with the sample size $ n $, the LRTs have Gaussian fluctuations (see \cite{10.1214/09-AOS694,JiangY13C}). 
More generally, \cite{10.1214/aop/1078415845} proved the central limit theorem for the LSSs of high-dimensional $ \bbB $ under very mild conditions using the random matrix theory (RMT). We refer to this result as the Bai-Silverstein theorem (BST) for brevity. Following the development of \cite{10.1214/aop/1078415845}, there are many extensions under different settings.  \cite{pan2008central} generalized the BST by removing the constraint on the fourth moment of the underlying random variables.  \cite{zheng2012central} and \cite{YangP15I} extended the BST to multivariate $F$ matrices and canonical correlation matrices, respectively. \cite{pan2014comparison} showed the CLT of the LSS for non-centered sample covariance matrices, and \cite{ 10.1214/14-AOS1292} studied the unbiased sample covariance matrix when the population mean is unknown. \cite{chen2015clt} focused on the ultra-high dimensional case in which the dimension $p$ is much larger than the sample size $n$. \cite{GaoH17H} and \cite{LiW21C} studied the CLT for the LSSs of the high-dimensional Spearman correlation and Kendall's rank correlation matrices, respectively. Without attempting to be comprehensive, we refer readers to other extensions \citep{BaiM07A, BaiH15C, BaiL19C, ZhengC19T, BannaN20C, najim2016gaussian, baik2018ferromagnetic, hu2019high, JiangB21G}.

Almost all the theories mentioned above have traditionally assumed that the spectral norms of $ \bSi $ are bounded in $ n $.
This assumption severely limits their applications in data analysis because in many fields, such as economics and wireless communication networks, the leading eigenvalues may tend to infinity. We use three examples here.
\begin{itemize}
	\item  \textit{\textbf{Panel data model}}(\cite{doi:10.1080/07474938.2017.1307580}): Consider a fixed effect panel data model $$ y_{i t}=x_{i t}^{\prime} \beta+\mu_{i}+v_{i t},~~i=1, \ldots, n,~~t=1, \ldots, T, $$ where $i$ is the index of the cross-sectional units, $t$ is the index of the time series observations, $\mu_{i}$ represents the time invariant individual effects, and $v_{i t}$ is the idiosyncratic error term. ${\bf v}_{t}=(v_{1 t},\dots,v_{n t})^{\prime} $ are i.i.d. $ N(0,\bSi). $  Assume that $ v_{i t}=\sum_{j=1}^{r}\gamma_{ij}f_{tj}+\epsilon_{t} $, where $ f_{tj} $ is the factor $ j $ in period $ t $, $ \gamma_{ij} $ is the factor loading of the individual $ i $ for factor $ j $, $ \epsilon_{t} $ is the error term with i.i.d. $N(0,\sigma^{2}$), and $ r $ is the known number of factors. 
	The sphericity test in the fixed effect panel data model is
	 $$ H_{0}: \bSi=\sigma^{2}\bbI_{n}~~\mbox{v.s.}~~H_{1}:  \bSi=\sigma^{2}\left(\bbI_{n}+\sum_{j=1}^{r}\frac{\sigma_j^2}{\sigma^2}\bm\gamma_{j}\bm\gamma_{j}'\right),$$
	  where $ \bbI_{n} $ denotes the $ n $-dimensional identity matrix. ${\bm\gamma}_j=(\gamma_{1j},\dots,\gamma_{nj})'$ is the vector of factor loading, and $\sigma_j^2$ is the variance of factor $f_{ tj}$. Many efforts have been made to analyze the asymptotic power of sphericity tests in high-dimensional setups, where the number of cross-sectional units $ n $ in a panel is large, but the number of time series observations $ T $ could also be large. When $ n $ jointly tends to infinity with $ T $,  the norm of the perturbation term in the alternative hypothesis is greater than the threshold or even goes to infinity. In this case, the existing methods that assume $ \bSi $ are bounded are not applicable.
	 
\item \textit{\textbf{Signal detection}}(\cite{10.1093/biomet/asw060}): Consider a single signal model
	\begin{equation*} 
		{\bf x}=\chi_{s}^{1/2}u{\bf h} +\sigma\bm{v},   
	\end{equation*} 
	where $ {\bf h} $ is an unknown $ p $-dimensional vector, $ u $ is a random variable distributed as $ N(0,1) $, $  \chi_{s}$ is the signal strength, $\sigma  $ is the noise level, and $\bm {v} $ is
 a random noise vector that is independent of $ u $ and distributed as a multivariate Gaussian $N_p (0, \bm\Sigma_{v})$.
 It is easy to check that the covariance matrix of $\bbx$ is $\bm\Sigma_x=\sigma^2\bm\Sigma_{v}+\chi_s{\bf h}{\bf h}'$. When the noise level is low, 
while the signal strength is large and sometimes tends to infinity, it is illogical to assume the boundedness of $\bm\Sigma_x$.

\item \textit{\textbf{$ \textbf{m} $-factor structure}}(\cite{li2020asymptotic}): Consider the $ m $-factor model
 $$
 \boldsymbol{X}_{t}=\mathbf{A} \boldsymbol{F}_{t}+\boldsymbol{E}_{t}
 $$
 where the factors $\boldsymbol{F}_{t} \sim N\left(0, \mathbf{I}_{m}\right)$  are independent of the idiosyncratic error terms $\boldsymbol{E}_{t} \sim$ $N\left(0, \sigma^{2} \mathbf{I}_{p}\right) .$ The loading matrix $\mathbf{A}_{p \times m}$ is deterministic and of full rank such that $\mathbf{A}^* \mathbf{A}$ has eigenvalues $a_{1}>\cdots>a_{m}>0$. The eigenvalues of the population covariance matrix $\bSi_{p}$ of $\boldsymbol{X}_{t}$ are
 $$\operatorname{Spec}\left(\bSi_{p}\right)=\{a_{1}+\sigma^{2}, \dots, a_{m}+\sigma^{2}, \underbrace{\sigma^{2}, \dots, \sigma^{2}}_{p-m}\},$$ 
  which follow the generalized spiked model. Because of the complexity of the real data, when $\sigma$ is small, the signal-to-noise ratio may be large enough to give rise to the large spectral norm of $\bSi_{p}/\sigma^2$. Therefore, in this case, we assume the unbounded spectra of the population covariance matrices would be more realistic.
\end{itemize}
For these reasons, it is of practical value to obtain the asymptotic properties of the LSS when $ \bSi $ is unbounded. Therefore, in this paper,
we focus on the CLT for the LSS of a general covariance matrix structure
\begin{align}\label{ds}
  \bSi=\mathbf{V}\left(\begin{array}{cc}
	\bbD_{1} & 0 \\
	0 & \bbD_{2}
\end{array}\right) \mathbf{V}^{\ast},
\end{align}
where $\mathbf{V}$ is a unitary matrix, $\bbD_{1}$ is a diagonal matrix consisting of the descending unbounded eigenvalues, and $\bbD_{2}$ is the diagonal matrix of the bounded eigenvalues. 
 The setting \eqref{ds} is attributed to the famous spiked model
in which a few large eigenvalues of the population covariance matrix are assumed to be well separated from the rest \citep{10.1214/aos/1009210544}. The spiked model has provided the foundations for a rich theory of principal component analysis through the performance of extreme eigenvalues as discussed in \cite{BAIK20061382, 10.2307/24307692, 10.1214/07-AIHP118, Nadler08F,JungM09P,  BAI2012167, OnatskiM14S, BloemendalK16P, WangY17E, DonohoG18O, PerryW18O, JohnstoneP18P, YangJ18E, YaoZ18E, Dobriban20P, JohnstoneO20T, cai2020limiting, JiangB21G}. Recently, \cite{li2020asymptotic}, \cite{yin2021spectral} and \cite{zhang2022asymptotic} investigated the trace of the large sample covariance matrix with the spiked model assumption.

{ We highlight the main contributions of the present paper. First, we prove a non-trivial extension of the BST to the case in which the population covariance matrices are unbounded in the spectral norm. In particular, we show how the kernel functions and the divergence rate of the population spectral norm affect the new CLT. 
Second, it is known that Gaussian-like moments, i.e., the first fourth moments, coincide with a standard Gaussian distribution, or the diagonality of the population covariance matrix is necessary for the CLT of the LSSs (e.g., \cite{10.1214/14-AOS1292}). However, we prove that these restrictions can be completely removed by renormalization. More importantly, even if the limit of the LSS variance does not exist, the renormalized CLT still holds. Third, 
 by combining the technical strategy in
\cite{10.1214/aop/1078415845} and the analysis of the block decomposition of the sample covariance matrix in \cite{JiangB21G}, we prove that the LSSs of the unbounded and bounded parts are asymptotically independent.} 
 The proof in this entire paper is built on the decomposition of the LSS, which is divided into two parts, an unbounded part and a bounded part. It is worth noting that the bounded part of the LSS cannot use the result in \cite{10.1214/aop/1078415845} directly since off-diagonal sample covariance matrix blocks are not 0, which yields a bias between the bounded part of the LSS and the LSS of the bounded sample covariance matrix blocks. In facing this challenge, we make full use of the RMT and prove for the first time that bias can be measured in probability in the literature. 
As an application, the established CLT is employed to study the asymptotic behavior of the LRT and Nagao's trace (NT) test under the hypothesis $$ H_{0}:\bSi=\bbI_{p}~~\mbox{v.s.}~~H_{1}:\bSi\neq\bbI_{p}.$$ It is known that the LRT and NT are typical examples of LSSs with kernel functions $ f(x)=x-\log x-1 $ and $ x^{2}-2x+1 $.  In this paper, we start from a different perspective by studying the asymptotic distribution of the LRT and NT under the alternative that $ \bbD_{1} $ is composed of $ M $ spiked eigenvalues tending to infinity with multiplicity, $ \bbD_{2}=\bbI_{p-M} $, and then we establish their asymptotic power under the above alternative. 
Based on previous knowledge (e.g., \citep{10.1214/09-AOS694}), it seems that when the number of spiked eigenvalues is small, the influence caused by the spiked part is small, and the distribution of the LSS is mainly decided by the bulk part; however, that is not the case. After simulation, a surprising result is that when the spiked eigenvalues are very large, the LSS will also be affected by the spiked part even though the number of spikes is small. 

The remaining sections are organized as follows: Section \ref{section 2} provides a detailed description of the notation and assumptions. The main results of the CLTs for the LSS of the sample covariance matrix are stated in Section \ref{section 3}. In Section \ref{section 5}, we explore the applications of our main results. We also present the results of our numerical studies in Section \ref{section 4}. Technical proofs of the theorems are presented in Section \ref{section 6}.

\section{Notation and assumptions}\label{section 2}

 Throughout the paper, we use bold capital letters and bold lowercase letters to represent matrices and vectors, respectively. Scalars are often in regular letters.  $\boldsymbol{e}_{i}$ denotes a standard basis vector whose components are all zero, except the $i$-th,
which equals 1. We use tr$ (\bbA) $, $ \bbA' $ and $ \bbA^{\ast} $ to denote the trace, transpose and conjugate transpose of matrix $ \bbA $, respectively. We also use $f'$ to denote the derivative of function $f$, and we use $ \frac{\partial}{\partial z_{1}}f(z_{1},z_{2}) $ to denote the partial derivative of function $ f $ with respect to $ z_{1} $; however, the context is clear enough that there is no risk of ambiguity. Let $ \left[\bbA \right]_{ij}  $ denote the $ (i,j) $-th entry of the matrix $ \bbA $ and $ \oint_{\mathcal{C}}f(z)dz $ denote the contour integral of $ f(z) $ on the contour $ \mathcal{C} $. Let $ \lambda_{i}^{\bbA} $ denote the $ i $th largest eigenvalue of matrix $ \bbA $. Weak convergence is denoted by $ \stackrel{d}{\rightarrow}$. Throughout this paper,  we use $o(1)$ (resp. $o_p (1)$) to denote a scalar negligible (resp. in probability), and the notation $C$ represents some generic constants that may vary from line to line.

Let $ \bbX=(\boldsymbol x_{1},\ldots,\boldsymbol x_{n})=(x_{ij}) $, $ 1\leq i\leq p $, $ 1\leq j\leq n $, and $ \bbT $ be a $p \times p$ deterministic matrix and $\bSi=\bbT\bbT^{\ast}$. The spectrum of $ \bSi $ is formed as $\rho_{ 1}\geq\cdots \geq \rho_{ p}$. 
Define the singular value decomposition of $\bbT$ as
\begin{equation}\label{decT}
	\bbT=\bbV\bbD^{1/2}\bbU^*=
	\bbV
	\left( 
	\begin{array}{cc}
		\bbD_{1}^\frac{1}{2} & 0\\
		0 & \bbD_{2}^\frac{1}{2}
	\end{array}
	\right)
	\bbU^{\ast}
\end{equation}
where  $\bbU$ and $\bbV$ are unitary matrices, $\bbD_{1}=\rdiag(\alpha_1,\dots,\alpha_1,\alpha_2,\dots,\alpha_2,\dots,\alpha_K,\dots,\alpha_K)$ is a diagonal matrix of the spiked eigenvalues for which the components tend to infinity, and $\bbD_{2}$ is the diagonal matrix of the eigenvalues with the bounded components. Here,
$\al_{1}>\cdots>\al_{K}$ denotes the unbounded spiked eigenvalues of $ \bSi $ with the multiplicity $ d_{k}, k=1,\ldots,K $, and $ d_{1}+\cdots+d_{K}=M$. 
Moreover, let $ \rho_{i} = \al_{k} $ if $ i \in J_{k} $, where $ J_{k}=\left\lbrace j_{k}+1, \ldots, j_{k}+d_{k}\right\rbrace $ is the set of ranks of the $d_{k}$-ple eigenvalue $\alpha_{k}$. Then, 
 the corresponding sample covariance matrix $$\bbB=\frac{1}{n}\bbT\bbX\bbX^{\ast}\bbT^{\ast}$$
is the so-called generalized spiked sample covariance matrix.  $\lambda_{1}\geq\lambda_{2}\geq\cdots \geq\lambda_{p}$ denotes the eigenvalues of $\bbB$. 
{ Corresponding to the decomposition of $\bbD$, we decompose $ \bbV=\left(\bbV_{1},\bbV_{2} \right)$,  $ \bbU=\left(\bbU_{1},\bbU_{2} \right)$, and $\bGma=\bbV_{2}\bbD_{2}^{1/2}\bbU_{2}^{\ast} $, $ \boldsymbol r_{j}=\frac{1}{\sqrt{n}}\bGma\boldsymbol x_{j} $, $ \bbA_{j}=\bbB-z\bbI-\boldsymbol r_{j}\boldsymbol r_{j}^{*}. $ Let $ \mathbb{E}_{j} $ be the conditional expectation with respect to the $ \sigma $-field generated by $ \boldsymbol r_{1}, \dots, \boldsymbol r_{j} $. 
%
For any matrix $\bbA $ with real eigenvalues, the empirical spectral distribution of $\bbA $ is denoted by
\begin{equation*}
	F^{\bbA}\left(x \right)=\frac{1}{p}\left(\text{number of eigenvalues of }  \bbA \leq x \right).  	
\end{equation*}	
For any function of bounded variation $F$ on the real line, its Stieltjes transform is defined by
$$
m_{F}(z)=\int \frac{1}{\lambda-z} \mathrm{~d} F(\lambda), \quad z \in \mathbb{C}^{+} :=\{z \in \mathbb{C}: \Im z>0\}.
$$

The assumptions used in the results of this paper are as follows:
\newtheorem{assumption}{Assumption}[]
\begin{assumption} \label{ass1}
	$ \{x_{ij},  1\leq i\leq p ,  1\leq j\leq n \} $ {are independent random variables with common moments} $$ \mathbb{E}x_{ij}=0,\quad  \mathbb{E}\left| x_{ij}\right| ^{2}=1, \quad  \beta_{x}= \mathbb{E}\left|x_{ij}\right| ^{4}- \left|\mathbb{E}x_{ij}^{2}\right|^{2}-2,  \quad \alpha_{x}=\left|\mathbb{E}x_{ij}^{2}\right|^{2},  $$ 
{ and satisfy the following Lindeberg-type condition: }
	\begin{align*}
		&\frac{1}{n p} \sum_{i=1}^{p} \sum_{j=1}^{n} \mathbb{E}\left\{\left|x_{ij}\right|^{4} \mathbbm{1}_{\left\{\left|x_{ij}\right| \geq \eta \sqrt{n}\right\}}\right\} \rightarrow 0, \quad \text { for any constant } \eta>0.
	\end{align*}	
\end{assumption} 
\begin{assumption} \label{ass2}
	{ As $\min\{p,n\}\to\infty$, the ratio of the dimension-to-sample size (RDS)} $ c_{n}:={p}/{n}\rightarrow c>0. $
\end{assumption} 
\begin{remark}
	Assumptions 1-2 are standard in the RMT. Note that if $\mathbb{E}x_{ij}\neq 0$, we can use the centralized sample covariance matrices and $n-1
	$ instead of $\bbB_n$ and $n$, respectively, and the following results also hold. Details can be found in \citep{10.1214/14-AOS1292}. Therefore, in this sequel, we assume $\mathbb{E}x_{ij}= 0$ without loss of generality. 
\end{remark}
\begin{assumption}\label{ass3}
	 {$ \bbT $ is non-random. As $\min\{p,n\}\to\infty$, $\alpha_K\to\infty$ and $H_n:=F^{\bGma\bGma^{*}}\stackrel{d}{\rightarrow}H$, where $H$ is a distribution function on the real line.  $M$ is fixed.}
%
\end{assumption} 

\begin{remark}
It was shown by \cite{Silverstein95S} that under Assumptions 1-3, $F^{\bbB}\stackrel{d}{\rightarrow}F^{c,H}$ almost surely, where $F^{c,H}$ is a non-random distribution function whose Stieltjes transform $m:=m_{F^{c,H}}(z)  $ satisfies equation
\begin{align}\label{Sequation}
  m=\int \frac{1}{t(1-c-c z m)-z} d H(t).
\end{align}
In this sequel, we call $F^{c,H}$ the limiting spectral distribution (LSD) of $\bbB$.
Moreover, as the matrix $\underline{\bbB}=\frac{1}{n}\bbX^*\bbT^{*}\bbT\bbX$ shares the same non-zero eigenvalues with $\bbB$, equation \eqref{Sequation} can be rewritten as
$$\underline{m}=-\left(z-c \int \frac{t }{1+t \underline{m}}d H(t)\right)^{-1},$$ 
where $\underline{m}:=m_{\underline{F}^{c,H}}(z)$ represents the Stieltjes transform of the LSD of $\underline{\bbB}$.
\end{remark} 
\begin{assumption}\label{ass4}
	Kernel functions $ f_{1},\dots, f_{h} $ are analytic on an open domain of the complex plan containing the support of $ F^{c,H}. $ Moreover, suppose that for any $l=1,\dots,h$,  $$\lim_{\{x_{n},y_n\}\to\infty\atop {x_{n}}/{y_{n}}\rightarrow 1}\frac{f_{l}'\left(x_{n} \right) }{f_{l}'\left(y_{n} \right)}= 1 .$$
\end{assumption}    
\begin{remark}
	In fact, Assumption \ref{ass4} is not too restrictive for application, many common functions such as logarithmic and polynomial functions satisfy it. However, it is worth noting that the exponential function may not satisfy this assumption.
\end{remark}

\begin{assumption}\label{ass5}
	$ \bbT $  {is real or the variables} $x_{ij}$ {are complex satisfying} $\alpha_{x}=0 .$ 
\end{assumption}
\begin{assumption}\label{ass6}
	$ \bbT^{*}\bbT $  {is diagonal or} $ \beta_{x}=0. $
\end{assumption}
\begin{remark}
	\textit{Assumption \ref{ass5} is for the second-order moment condition of $ x_{ij} $, and Assumption \ref{ass6} is for the fourth-order moment. They were first proposed by} \cite{10.1214/14-AOS1292},  \textit{ who proved that the two assumptions are necessary for their results when the Gaussian-like moment conditions in the BST do not hold. }
\end{remark}

\section{Main Results}  \label{section 3}
Now, we are in a position to present our main theorems, and their proofs are provided in Section \ref{section 6}. Note that $$\sum_{j=1}^{p}f\left( \bgl_{j}\right)=p\int f\left(x \right)dF^{\bbB}(x). $$ 
Thus, for brevity, we define the normalized LSSs as
$$  Y_{l}= \int f_{l}\left(x \right)dG_{n}\left( x\right)-{\sum_{k=1}^{K}d_{k}f_{l}\left(\phi_{n}\left(\al_{k} \right)  \right)}-\frac{M}{2\pi i}\oint_{\mathcal C}f_{l}\left(z \right)\frac{\underline{m}'(z)}{\underline{m}(z)}dz.\quad l=1,2,\dots,h, $$
where $$ G_{n}\left( x\right)=p[F^{\bbB}\left(x \right)-F^{c_{n},H_{n}}\left(x \right)]   , ~~\phi_n\left(x \right)=x\left(1+c_n\int\frac{t}{x-t}dH_n\left(t \right)  \right), $$ 
and $F^{c_{n},H_{n}}$ is the LSD $F^{c,H}$ with $c,~H$ replaced by $c_n,~ H_n$. $ \underline{F}^{c,H} $ denotes the LSD of matrix $ n^{-1}\bbX^{\ast}\bbU_{2}\bbD_{2}\bbU_{2}^{*}\bbX$, $ \bbU=(\bbU_{1},\bbU_{2}), $
$ \bbU_{1}=\left(u_{ij} \right)_{i=1,\dots,p;j=1,\dots,M}  $, $ \phi_{k}=\phi\left(x \right)\mid_{x=\al_{k}}=\al_{k}\left(1+c\int\frac{t}{\al_{k}-t}dH\left(t \right)  \right)  $,
$$ b_{n}\left(z \right)=\frac{1}{1+n^{-1}\mathbb{E}\rtr\bGma\bGma^{*}\bbA_{j}^{-1}\left(z \right) },~~~{\mathcal{U}_{i_{1}j_{1}i_{2}j_{2}}=\sum_{t=1}^{p}\overline{u}_{ti_{1}}u_{tj_{1}}u_{ti_{2}}\overline{u}_{tj_{2}},  } $$ $$\theta_{k}=\phi_{k}^{2}\underline{m}_{2}\left( \phi_{k}\right), ~~\nu_{k}=\phi_{k}^{2} \underline{m}^{2}\left(\phi_{k}\right),$$$$  \underline{m}\left( \lambda\right)=\int\frac{1}{x- \lambda }d\underline{F}^{c,H}\left( x\right),~~\underline{m}_{2}\left( \lambda\right)=\int\frac{1}{\left( \lambda-x\right) ^{2}}d\underline{F}^{c,H}\left( x\right), $$ 
$$  c_{nM}=\dfrac{p-M}{n},~~H_{2n}=F^{\bbD_{2}},  $$
$$ \bbP_{n}(z)=((1-c_{nM})\bGma\bGma^{*}-zc_{nM}m_{2n0}(z)\bGma\bGma^{*}-z\bbI_{p})^{-1}.  $$
Here, $ m_{2n0}(z) $ is the Stieltjes transform of $ F^{c_{nM},H_{2n}} $ and $ \underline{m}_{2n0}(z)=-\frac{1-c_{nM}}{z}+c_{nM}m_{2n0}(z) $. For clarification purposes, $ m_{1n0}(z) $ also denotes the Stieltjes transform of $ F^{c_{n},H_{n}} $, $m_{n}=\frac{1}{p}\mathrm{tr}\left(\bbB-z\bbI_{p} \right)^{-1} $, and $m_{2n}=\frac{1}{p-M}\mathrm{tr}\left(\bbS_{22}-z\bbI_{p-M} \right)^{-1}$, which will be used later in the proof. 

We first establish a CLT of the LSS
without any restrictions imposed on the Gaussian-like moments 
or on the structures of the population covariance matrix  by renormalizing the LSS.
 

\begin{thm}\label{thm1}
	\textit{Under Assumptions
	 \ref{ass1}--\ref{ass4}}, we have $$ \frac{Y_{1}-\mu_{1}}{\sqrt{\varsigma_{1}^{2}}}\stackrel{d}{\rightarrow}N\left(0, 1\right),    $$  \textit{where the mean function is}
	\begin{align}
		\mu_{1}&=\nonumber-\frac{\alpha_{x}}{2 \pi i}\cdot\oint_{\mathcal{C}}\frac{  c_{nM}f_{1}(z) \int \underline{m}_{2n0}^{3}(z)t^{2}\left(1+t \underline{m}_{2n0}(z)\right)^{-3} d H_{2n}(t)}{\left(1-c_{nM} \int \frac{\underline{m}_{2n0}^{2}(z) t^{2}}{\left(1+t \underline{m}_{2n0}(z)\right)^{2}} d H_{2n}(t)\right)\left(1-\alpha_{x} c_{nM} \int \frac{\underline{m}_{2n0}^{2}(z) t^{2}}{\left(1+t \underline{m}_{2n0}(z)\right)^{2}} d H_{2n}(t)\right) }dz \\
		&-\frac{\beta_{x}}{2 \pi i} \cdot \oint_{\mathcal{C}} \frac{c_{nM}  f_{1}(z)\int \underline{m}_{2n0}^{3}(z) t^{2}\left(1+t \underline{m}_{2n0}(z)\right)^{-3} d H_{2n}(t)}{1-c_{nM} \int \underline{m}^{2}_{2n0}(z) t^{2}\left(1+t \underline{m}_{2n0}(z)\right)^{-2} d H_{2n}(t)} dz, \label{33} \quad 
	\end{align}
	\textit{and the covariance function is} 
	\begin{align*}
		\varsigma_{1}^{2}=\sum_{k=1}^{K}\frac{\phi_{n}^{2}\left(\al_{k} \right)}{n}\left( f_{1}'\left(\phi_{n}\left(\al_{k} \right)\right)  \right)^{2}s_{k}^{2} -\frac{1}{4\pi^{2}}\oint_{\mathcal{C}_{1}}\oint_{\mathcal{C}_{2}}f_{1}\left(z_{1} \right)f_{1}\left(z_{2} \right)\vartheta_{n}^{2}dz_{1}dz_{2}.
	\end{align*} 
	Here, $ s_{k}^{2}=\frac{\sum_{j\in J_{k}}\left( \left(\al_{x}+1 \right)\theta_{k}+\beta_{x}\mathcal{U}_{jjjj}\nu_{k}\right) +\sum_{j_{1}\neq j_{2}}\beta_{x}\mathcal{U}_{j_{1}j_{1}j_{2}j_{2}}\nu_{k} }{\theta_{k}^{2}}, \vartheta_{n}^{2}=\Theta_{0,n}(z_{1},z_{2})+\al_{x}\Theta_{1,n}(z_{1},z_{2})+\beta_{x}\Theta_{2,n}(z_{1},z_{2}),$ where
	\begin{align*}
		\Theta_{0,n}(z_{1},z_{2})&=\dfrac{\underline{m}_{2n0}^{\prime}(z_{1}) \underline{m}_{2n0}^{\prime}(z_{2})  }{(\underline{m}_{2n0}(z_{1})-\underline{m}_{2n0}(z_{2}) )^{2} }-\dfrac{1}{(z_{1}-z_{2})^{2}},\\
		\Theta_{1,n}(z_{1},z_{2})&=\frac{\partial}{\partial z_{2}}\left\lbrace \dfrac{\partial \mathcal{A}_{n}(z_{1},z_{2})}{\partial z_{1}}\dfrac{1}{1-\al_{x}\mathcal{A}_{n}(z_{1},z_{2})} \right\rbrace ,\\
		{\mathcal{A}_{n}(z_{1},z_{2})}&=\dfrac{z_{1}z_{2}}{n}\underline{m}_{2n0}^{\prime}(z_{1}) \underline{m}_{2n0}^{\prime}(z_{2})\mathrm{tr}{\bGma^{*}\bbP_{n}(z_{1})\bGma\bGma^{\prime}\bbP_{n}(z_{2})^{\prime} \bar{\bGma}}\\
		\Theta_{2,n}(z_{1},z_{2})&=\dfrac{z_{1}^{2}z_{2}^{2}\underline{m}_{2n0}^{\prime}(z_{1}) \underline{m}_{2n0}^{\prime}(z_{2})}{n}\sum_{i=1}^{p}\left[ \bGma^{*}\bbP_{n}^{2}(z_{1})\bGma\right]  _{ii}\left[ \bGma^{*}\bbP_{n}^{2}(z_{2})\bGma\right]  _{ii}
	\end{align*}
	In the equation above, $\mathcal{C}, \mathcal{C}_{1}$ \textit{and} $\mathcal{C}_{2}$ \textit{are closed contours in the complex plan enclosing the support of the} $ F^{c, H}$, and $\mathcal{C}_{1}$ \textit{and }$\mathcal{C}_{2}$ \textit{are nonoverlapping}. It is worth noting that $ \Theta_{1,n}(z_{1},z_{2})  $ and $ \Theta_{2,n}(z_{1},z_{2}) $ may not converge.
\end{thm}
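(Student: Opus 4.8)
The plan is to split $Y_1$ into a contribution from the $M$ diverging (spiked) sample eigenvalues and a contribution from the remaining $p-M$ bulk eigenvalues, to prove a central limit theorem for each piece, and to show the two pieces are asymptotically independent; the recentring terms $\sum_k d_k f_1(\phi_n(\al_k))$ and $\frac{M}{2\pi i}\oint_{\mathcal C}f_1(z)\,\underline m'(z)/\underline m(z)\,dz$ in the definition of $Y_1$ are tailored precisely so that this decomposition leaves no deterministic residue. Using \eqref{decT} and the unitarity of $\bbV$, I would first replace $\bbB$ by the block matrix $\frac1n\bbD^{1/2}\bbU^*\bbX\bbX^*\bbU\bbD^{1/2}$, with blocks $\bbS_{11},\bbS_{12},\bbS_{21},\bbS_{22}$ induced by the partition $\bbU=(\bbU_1,\bbU_2)$, where $\bbS_{22}=\frac1n\bbD_2^{1/2}\bbU_2^*\bbX\bbX^*\bbU_2\bbD_2^{1/2}$ shares its nonzero eigenvalues with $\frac1n\bGma\bbX\bbX^*\bGma^*$; then I would invoke the eigenvalue-separation theory for generalized spiked models to isolate the $M$ largest eigenvalues $\lambda_1\ge\cdots\ge\lambda_M$ of $\bbB$, each lying near some $\phi_n(\al_k)$ with multiplicity $d_k$ and diverging with $\al_k$, from the bulk eigenvalues $\lambda_{M+1},\dots,\lambda_p$, which stay near the support of $F^{c_{nM},H_{2n}}$.

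For the spiked piece I would use the CLT for the spiked eigenvalues of the generalized spiked model with multiplicities, obtained via the block decomposition in \cite{JiangB21G} and adapted to the regime $\al_K\to\infty$: the quantities $\sqrt n\,(\lambda_j/\phi_n(\al_k)-1)$, $j\in J_k$, are asymptotically jointly Gaussian with a covariance matrix built from $\theta_k$, $\nu_k$ and the eigenvector interactions $\mathcal U_{i_1j_1i_2j_2}$ of $\bbU_1$, the centering $\phi_n(\al_k)$ being chosen to absorb the deterministic bias. A first-order Taylor expansion then gives $\sum_{j=1}^M f_1(\lambda_j)-\sum_k d_k f_1(\phi_n(\al_k))=\sum_k f_1'(\phi_n(\al_k))\sum_{j\in J_k}(\lambda_j-\phi_n(\al_k))+o_p(1)$, the remainder being controlled by Assumption \ref{ass4}: since $\lambda_j/\phi_n(\al_k)\to1$ with both diverging, $f_1'(\xi)/f_1'(\phi_n(\al_k))\to1$ uniformly for $\xi$ between them. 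Summing over $k$ yields a centered Gaussian limit with variance $\sum_k \frac{\phi_n^2(\al_k)}{n}\,(f_1'(\phi_n(\al_k)))^2 s_k^2$, the first term of $\varsigma_1^2$.

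The bulk piece is the technical heart. Since the off-diagonal blocks $\bbS_{12},\bbS_{21}$ are nonzero, the bulk part of the LSS of $\bbB$ differs from the LSS of $\bbS_{22}$ by a bias, and I would show via resolvent identities and a Schur-complement reduction that $\frac1p\mathrm{tr}(\bbB-z\bbI)^{-1}$ and $\frac1{p-M}\mathrm{tr}(\bbS_{22}-z\bbI)^{-1}$ are linked through $\bbS_{21}(\bbS_{11}-z\bbI)^{-1}\bbS_{12}$, the $O(\al_K^{-1})$ size of $(\bbS_{11}-z\bbI)^{-1}$ compensating the diverging norm of $\bbS_{12}$; integrating against $f_1$ over $\mathcal C$, the bias equals a deterministic quantity plus $o_p(1)$ — this is the ``bias measured in probability'' step, and that deterministic quantity is exactly what the contour term $\frac{M}{2\pi i}\oint_{\mathcal C}f_1\,\underline m'/\underline m$, together with the mismatch between the centerings $F^{c_n,H_n}$ and $F^{c_{nM},H_{2n}}$, cancels. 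The matrix $\bbS_{22}$ is then a sample covariance matrix with bounded population spectrum $\bbD_2$ but with $\bGma^*\bGma=\bbU_2\bbD_2\bbU_2^*$ non-diagonal in general, so the Gaussian-like/diagonality hypotheses of the original BST fail; instead I would apply its refinement under Assumptions \ref{ass5}--\ref{ass6} (\cite{10.1214/14-AOS1292,najim2016gaussian}), giving the CLT for $\sum_i f_1(\lambda_i^{\bbS_{22}})-(p-M)\int f_1\,dF^{c_{nM},H_{2n}}$ with mean $\mu_1$ (the $\al_x$- and $\beta_x$-terms of \eqref{33}) and variance $-\frac1{4\pi^2}\oint_{\mathcal C_1}\oint_{\mathcal C_2}f_1(z_1)f_1(z_2)\vartheta_n^2\,dz_1dz_2$.

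Finally, using the martingale-difference decomposition of \cite{10.1214/aop/1078415845} for the bulk LSS together with the block decomposition of \cite{JiangB21G}, I would show that the cross-covariance of the spiked and bulk pieces vanishes — heuristically the spiked fluctuations are governed by the $\bbU_1$-directions of $\bbX$ and the bulk LSS by the $\bbU_2$-directions — so the two pieces are jointly Gaussian with block-diagonal covariance; adding the two variances produces $\varsigma_1^2$ and collecting the means produces $\mu_1$, giving $(Y_1-\mu_1)/\sqrt{\varsigma_1^2}\stackrel{d}{\rightarrow}N(0,1)$. The hardest step should be the probabilistic control of the off-diagonal-block bias: the standard RMT resolvent and quadratic-form estimates assume a bounded spectral norm, so one must re-establish the required concentration bounds while tracking the precise $\al_K^{-1}$ gains and argue that the bias concentrates to its deterministic limit rather than merely being $O_p(1)$.
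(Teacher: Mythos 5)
Your decomposition is exactly the paper's: spiked part handled by the Jiang--Bai CLT for $\gamma_{kj}$ plus a Newton--Leibniz expansion controlled by Assumption \ref{ass4}; bulk part reduced to the LSS of $\bbS_{22}$ via a Schur-complement computation in which $(\bbS_{11}-z\bbI)^{-1}\approx \underline m(z)\bbD_1^{-1}$ compensates the diverging $\bbS_{12}$, producing the deterministic $\frac{M}{2\pi i}\oint f_1\,\underline m'/\underline m$ correction; asymptotic independence of the two pieces obtained by conditioning on $\bbU_2^*\bbX$. One small imprecision: there is no ``mismatch'' between the centerings to cancel --- the paper's Lemma \ref{lemma1} shows $p\int f\,dF^{c_n,H_n}=(p-M)\int f\,dF^{c_{nM},H_{2n}}$ exactly, so the contour term comes entirely from the off-diagonal-block bias.

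The one genuine problem is your treatment of the bulk CLT. Theorem \ref{thm1} is stated under Assumptions \ref{ass1}--\ref{ass4} only; its entire point (emphasized in the introduction) is that the structural restrictions of Assumptions \ref{ass5}--\ref{ass6} can be dropped for a \emph{single} LSS by self-normalization. You instead propose to ``apply its refinement under Assumptions \ref{ass5}--\ref{ass6}'', which would prove a strictly weaker statement --- essentially the $h=1$ case of Theorem \ref{thm2} --- and leaves Theorem \ref{thm1} as stated unproved. The correct move, which your citation of \cite{najim2016gaussian} already points to, is the one the paper makes in Lemma \ref{lemma3}: without Assumptions \ref{ass5}--\ref{ass6} the variance parameters $\Theta_{1,n}$ and $\Theta_{2,n}$ need not converge, but Najim--Yao show that the $n$-dependent proxy $\vartheta_n^2$ is asymptotically exact (close to the true finite-$n$ variance $(\vartheta_n^0)^2$ in the L\'evy--Prohorov sense), so that $\kappa_1^{-1}(Q_1-\mu_1)\stackrel{d}{\to}N(0,1)$ with the \emph{non-convergent} normalizer $\kappa_1^2=-\frac{1}{4\pi^2}\oint_{\mathcal C_1}\oint_{\mathcal C_2}f_1(z_1)f_1(z_2)\vartheta_n^2\,dz_1dz_2$. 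You need to make this step explicit rather than routing through Assumptions \ref{ass5}--\ref{ass6}; otherwise the argument does not deliver the theorem under its stated hypotheses.
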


As a minor price for the applicability enlargement, the new CLT described above only applies to a single LSS. To guarantee that the new CLT applies to multiple normalized LSSs,  structural
assumptions about the population covariance matrices are needed (Assumptions \ref{ass5} and \ref{ass6}). 
The following theorem is a non-trivial extension of the BST:

\begin{thm}\label{thm2}
	\textit{Under Assumptions \ref{ass1}--\ref{ass6}}, the random vector  $$ \left( \frac{Y_{1}-\mu_{1}}{\sqrt{\sigma_{1}^{2}}},\dots,\frac{Y_{h}-\mu_{h}}{\sqrt{\sigma_{h}^{2}}}\right)'\stackrel{d}{\rightarrow}N_{h}\left(0, \bPsi \right),    $$
    with mean function   \begin{align}
    	\mu_{l}&=\nonumber-\frac{\alpha_{x}}{2 \pi i}\cdot\oint_{\mathcal{C}}f_{l}(z)\frac{  c_{nM} \int \underline{m}_{2n0}^{3}(z)t^{2}\left(1+t \underline{m}_{2n0}(z)\right)^{-3} d H_{2n}(t)}{\left(1-c_{nM} \int \frac{\underline{m}_{2n0}^{2}(z) t^{2}}{\left(1+t \underline{m}_{2n0}(z)\right)^{2}} d H_{2n}(t)\right)\left(1-\alpha_{x} c_{nM} \int \frac{\underline{m}_{2n0}^{2}(z) t^{2}}{\left(1+t \underline{m}_{2n0}(z)\right)^{2}} d H_{2n}(t)\right) }dz \\
    	&-\frac{\beta_{x}}{2 \pi i} \cdot \oint_{\mathcal{C}} f_{l}(z) \frac{c_{nM} \int \underline{m}_{2n0}^{3}(z) t^{2}\left(1+t \underline{m}_{2n0}(z)\right)^{-3} d H_{2n}(t)}{1-c_{nM} \int \underline{m}^{2}_{2n0}(z) t^{2}\left(1+t \underline{m}_{2n0}(z)\right)^{-2} d H_{2n}(t)} dz, \label{33} \quad l=1,\dots,h,	
    \end{align}
    variance function
	\begin{align*}
		\sigma_{l}^{2}&=\sum_{k=1}^{K}\frac{\phi_{n}^{2}(\al_{k})}{n}(f_{l}^{\prime }(\phi_{n}(\al_{k})))^{2} s_{k}^{2}+\kappa_{ll}, \quad l=1,\dots,h,
    \end{align*} 
     and covariance matrix $  \bPsi =\left( \psi_{st}\right) _{h\times h} $, 
     	\begin{align*}
		\psi_{st}=\frac{\sum_{k=1}^{K}\varpi_{s}^{k}\varpi_{t}^{k}s_{k}^{2}+ \kappa_{st}}{\sqrt{\sum_{k=1}^{K}(\varpi_{s}^{k})^{2}s_{k}^{2} +\kappa_{ss}}\sqrt{\sum_{k=1}^{K}(\varpi_{t}^{k})^{2}s_{k}^{2} +\kappa_{tt}}},
	\end{align*} 
	\textit{where}	
	\begin{align*}
		\kappa_{st}
		&=-\frac{1}{4 \pi^{2}} \oint_{\mathcal{C}_{1}} \oint_{\mathcal{C}_{2}} \frac{f_{s}\left(z_{1}\right) f_{t}\left(z_{2}\right)}{\left(\underline{m} {\left.\left(z_{1}\right)-\underline{m}\left(z_{2}\right)\right)^{2}}\right.} d \underline{m}\left(z_{1}\right) d \underline{m}\left(z_{2}\right) 
		-\frac{c \beta_{x}}{4 \pi^{2}} \oint_{\mathcal{C}_{1}} \oint_{\mathcal{C}_{2}} f_{s}\left(z_{1}\right) f_{t}\left(z_{2}\right)\\
		&\times\left[\int \frac{t^2}{\left(\underline{m}\left(z_{1}\right) t+1\right)^{2}\left(\underline{m}\left(z_{2}\right) t+1\right)^{2}} d H(t)\right] d \underline{m}\left(z_{1}\right) d \underline{m}\left(z_{2}\right)\\
		&\quad-\frac{1}{4 \pi^{2}} \oint_{\mathcal{C}_{1}} \oint_{\mathcal{C}_{2}} f_{s}\left(z_{1}\right) f_{t}\left(z_{2}\right)\left[\frac{\partial^{2}}{\partial z_{1} \partial z_{2}} \log \left(1-a\left(z_{1}, z_{2}\right)\right)\right] d z_{1} d z_{2},\\
		a\left(z_{1}, z_{2}\right)&=\alpha_{x}\left(1+\frac{\underline{m}\left(z_{1}\right) \underline{m}\left(z_{2}\right)\left(z_{1}-z_{2}\right)}{\underline{m}\left(z_{2}\right)-\underline{m}\left(z_{1}\right)}\right).
		\end{align*} 
Note that $ s_{k}^{2} $ is defined in Theorem \ref{thm1}, and $ \varpi_{l}^{k}=\lim_{n\to\infty}\dfrac{\phi_{n}(\al_{k})}{\sqrt{n}}f_{l}^{\prime}(\phi_{n}(\al_{k}) )$ is allowed to be infinity.
	
\end{thm}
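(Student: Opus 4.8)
The plan is to derive Theorem~\ref{thm2} from Theorem~\ref{thm1} together with the structural Assumptions~\ref{ass5}--\ref{ass6}, rather than redoing the whole argument. First I would set up the fundamental decomposition of the LSS that underlies the entire paper: write $\bbB$ in the basis given by $\bbU$, split the index set according to the block structure of $\bbD$, and write
\begin{align*}
\int f_l\,dG_n
= \Big(\text{unbounded part depending on }\bbS_{11},\bbS_{12},\bbS_{21}\Big)
 +\Big(\text{bounded part depending on }\bbS_{22}\Big)+o_p(1),
\end{align*}
where $\bbS_{22}=n^{-1}\bGma^{*}\bbX\bbX^{*}\bGma$ (up to the conjugations in \eqref{decT}) and the off-diagonal blocks $\bbS_{12},\bbS_{21}$ contribute only a bias that, by the ``bias measured in probability'' estimate advertised in the introduction, is negligible. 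The unbounded part, after centering at $\sum_k d_k f_l(\phi_n(\alpha_k))$, is governed by the fluctuations of the $M\times M$ spiked block and contributes the term $\sum_k \varpi_l^k\cdot(\text{CLT quantity})$ with the variance constant $s_k^2$; the bounded part, after centering at the contour term, is a genuine Bai--Silverstein LSS of a bounded-spectrum sample covariance matrix and contributes $\kappa_{ll}$ via the classical BST formula (which is exactly the $\bbU_2\bbD_2\bbU_2^{*}$ model, whence $\underline m$, $\underline{F}^{c,H}$, and the $\alpha_x,\beta_x$ corrections of \cite{10.1214/14-AOS1292}).

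The second step is the asymptotic independence of these two parts. Here I would follow the block-decomposition analysis of \cite{JiangB21G}: condition on the columns of $\bbX$ restricted to the first $M$ coordinates (which drive the spiked fluctuations), show that the conditional distribution of the bounded part is, up to $o_p(1)$, the same as its unconditional distribution, and conclude by a characteristic-function argument that the joint limit factorizes. Concretely, one writes the joint characteristic function $\E\exp\{i\sum_l t_l Y_l\}$, splits $Y_l=Y_l^{(1)}+Y_l^{(2)}+o_p(1)$ into spike and bulk pieces, and uses a martingale CLT on the bulk piece conditionally on the $\sigma$-field generated by the spike coordinates, exploiting that the Marchenko--Pastur-type quantities entering the bulk CLT are asymptotically nonrandom and insensitive to that conditioning. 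This is the key technical ingredient that lets the covariance matrix $\bPsi$ be block-additive: $\psi_{st}\propto \sum_k \varpi_s^k\varpi_t^k s_k^2 + \kappa_{st}$, with the first term coming from the common randomness of the $K$ spikes and the second from the independent bulk.

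The third step is to handle the multivariate (multiple kernels) aspect and the $\alpha_x,\beta_x$ corrections. For a single LSS this is Theorem~\ref{thm1}; to pass to $h$ kernels I would apply the Cram\'er--Wold device, i.e. prove a CLT for $\sum_l t_l Y_l$, which by linearity of the contour integrals and of the $\phi_n(\alpha_k)$-terms is again an LSS (with kernel $\sum_l t_l f_l$) of the same matrix. The subtlety is that Theorem~\ref{thm1}'s variance $\varsigma_1^2$ involves the possibly-nonconvergent quantities $\Theta_{1,n},\Theta_{2,n}$, whereas Theorem~\ref{thm2} claims a clean limit $\bPsi$; this is precisely where Assumptions~\ref{ass5}--\ref{ass6} enter. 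Under Assumption~\ref{ass5} one has $\alpha_x=0$ (in the complex case) or $\bbT$ real (so $\alpha_x=1$ and the $\bbP_n$-terms simplify via $\bbT^{*}\bbT$-symmetry), which kills or tames the $\Theta_{1,n}$ term; under Assumption~\ref{ass6}, either $\beta_x=0$ or $\bbT^{*}\bbT$ diagonal, and in the diagonal case $\sum_i[\bGma^{*}\bbP_n^2(z_1)\bGma]_{ii}[\bGma^{*}\bbP_n^2(z_2)\bGma]_{ii}$ collapses to an integral against $H$, so that $\Theta_{2,n}$ converges and reproduces the $c\beta_x\!\int t^2/\!\cdots dH$ term in $\kappa_{st}$. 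I would verify these simplifications term by term, using \eqref{Sequation} and the identity $\underline m_{2n0}=-(1-c_{nM})/z+c_{nM}m_{2n0}$ to rewrite everything in terms of $\underline m$, then pass $n\to\infty$ using Assumptions~\ref{ass2}--\ref{ass4} (the latter guaranteeing the $\phi_n(\alpha_k)$-scaling behaves well and $\varpi_l^k$ is well-defined, possibly infinite).

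The main obstacle I expect is controlling the bias from the off-diagonal blocks $\bbS_{12},\bbS_{21}$ in probability and showing it does not contaminate the bulk CLT: one must show that replacing the relevant resolvent $(\bbB-z)^{-1}$ restricted to the bulk coordinates by $(\bbS_{22}-z)^{-1}$ incurs an error that is $o_p(1)$ after integration against $f_l$ over $\mathcal{C}$, uniformly in $z$ on the contour, despite the spiked eigenvalues of $\bbB$ escaping to infinity. This requires careful resolvent perturbation bounds (Woodbury-type identities relating $(\bbB-z)^{-1}$ to $(\bbS_{22}-z)^{-1}$ through the low-rank spike block) together with concentration for quadratic forms $\bbx_j^{*}(\cdot)\bbx_j$ under only the Lindeberg condition of Assumption~\ref{ass1}, and it is the step the introduction flags as ``proved for the first time.'' A secondary difficulty is the bookkeeping in the $\bbT$-real case of Assumption~\ref{ass5}, where $\alpha_x=1$ and one cannot simply drop the $\Theta_{1,n}$ contribution but must show it combines with $\Theta_{0,n}$ into the single clean $\log(1-a(z_1,z_2))$ term of $\kappa_{st}$; tracking the cancellations there, via the structure of $\bbP_n$ and the equation defining $\underline m$, is delicate but routine once the right identities are in place.
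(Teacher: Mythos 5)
Your proposal follows essentially the same route as the paper: decompose the LSS into the spiked block and the $\bbS_{22}$ bulk block, control the off-diagonal bias (the paper's Lemma \ref{lemma2}), establish asymptotic independence by a conditioning argument (Lemma \ref{lemma5}, which conditions on $\bbU_2^{*}\bbX$ rather than on the spike coordinates, but to the same effect), invoke the substitution-principle CLT of \cite{10.1214/14-AOS1292} for the bulk under Assumptions \ref{ass5}--\ref{ass6}, and add the two Gaussian limits. The only cosmetic difference is that you invoke Cram\'er--Wold explicitly where the paper sums two asymptotically independent Gaussian vectors directly; this changes nothing of substance.
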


\begin{remark}\label{remark}

If we rewrite the covariance matrix $  \bPsi$ in Theorem \ref{thm2} in the form of expressions with $n$, such as $\varsigma_{1}^{2}$ in Theorem \ref{thm1}, then Assumptions 5-6 should be removed analogously. However, in this case, we need to verify the nonsingularity of these covariance matrices, which is difficult unless the kernel functions are linearly related. Thus, we decided not to pursue that direction in this paper.

\end{remark}
\begin{remark}
	\textit{It is not difficult to find from the theorem that the asymptotic distributions of the LSSs depend on the divergence rates of} $ { \phi_{n}\left(\al_{k} \right)}$ and $ f'_{l}\left(\phi_{n}\left(\al_{k} \right) \right)  $. In particular, when $ \frac{ \phi_{n}\left(\al_{1} \right)}{\sqrt{n}} f'_{l}\left(\phi_{n}\left(\al_{1} \right) \right)\rightarrow 0 \left( n\rightarrow \infty \right)$, Theorem \ref{thm2} reduces to Theorem 2.1 in \cite{10.1214/14-AOS1292}; when $ \frac{ \phi_{n}\left(\al_{1} \right)}{\sqrt{n}} f'_{l}\left(\phi_{n}\left(\al_{1} \right) \right)\rightarrow \infty \left( n\rightarrow \infty \right)$, Theorem \ref{thm2} is a non-trivial extension of the CLT derived by \cite{JiangB21G}. Furthermore,  \cite{yin2021spectral} recently obtained a CLT when the kernel functions were polynomial. 
\end{remark}	
\begin{remark}\label{simple result}
	If $ \bbD_{2}=\bbI_{p-M} $, then the mean function $\mu_1$ and $ \kappa_{st} $ in the covariance function of Theorem \ref{thm2}  can be simplified from the results in \cite{wang2013sphericity} and \cite{10.1214/14-AOS1292}, i.e., 
	$$\phi_n\left(x \right)=x+\frac{x(p-M)}{n(x-1)},
~~~~
	\mu_{1}=\alpha_x I_{1}(f_{1})+\beta_{x} I_{2}(f_{1}),
$$
$$\kappa_{st}=(\alpha_x+1)J_{1}(f_{s}, f_{t})+\beta_{x} J_{2}(f_{s}, f_{t}),$$
	\begin{align*}
		I_{1}(f_{1})&=\lim _{r \downarrow 1} \frac{1}{2 \pi i} \oint_{|z|=1} f_{1}\left(\left|1+\sqrt{c} z\right|^{2}\right)\left[\frac{z}{z^{2}-r^{-2}}-\frac{1}{z}\right] d z,\\
		I_{2}(f_{1})&=\frac{1}{2 \pi i} \oint_{|z|=1} f_{1}\left(\left|1+\sqrt{c} z\right|^{2}\right) \frac{1}{z^{3}} d z,\\
		J_{1}(f_{s}, f_{t})&=\lim _{r \downarrow 1} \frac{-1}{4 \pi^{2}} \oint_{\left|z_{1}\right|=1} \oint_{\left|z_{2}\right|=1} \frac{f_{s}\left(\left|1+\sqrt{c} z_{1}\right|^{2}\right) f_{t}\left(\left|1+\sqrt{c} z_{2}\right|^{2}\right)}{\left(z_{1}-r z_{2}\right)^{2}} d z_{1} d z_{2},\\
		J_{2}(f_{s}, f_{t})&=-\frac{1}{4 \pi^{2}} \oint_{\left|z_{1}\right|=1} \frac{f_{s}\left(\left|1+\sqrt{c} z_{1}\right|^{2}\right)}{z_{1}^{2}} d z_{1} \oint_{\left|z_{2}\right|=1} \frac{f_{t}\left(\left|1+\sqrt{c} z_{2}\right|^{2}\right)}{z_{2}^{2}} d z_{2}.
	\end{align*}

\end{remark}

\section{Application}\label{section 5}
In this section, we focus on testing the hypothesis that a high-dimensional covariance matrix $ \bSi $ is equal to an identity matrix, that is,
$$ H_{0}:\bSi=\bbI_{p}\quad \text{vs} \quad H_{1}:\bSi=\mathbf{V}\left(\begin{array}{cc}
	\bbD_{1} & 0 \\
	0 & \bbI_{p-M}
\end{array}\right) \mathbf{V}^{\ast}
,$$ 
where $\bbD_{1}=\rdiag(\alpha_1,\dots,\alpha_1,\alpha_2,\dots,\alpha_2,\dots,\alpha_K,\dots,\alpha_K)$. 
For this problem, the two most classical test statistics are the likelihood ratio  test (LRT) statistic \citep{Wilks38L} and 
Nagao's trace (NT)
 test statistic \citep{Nagao1973}. Specifically, the LR and NT statistics can be formalized as
$$
L=\operatorname{tr} \bbB-\log \left|\bbB\right|-p~~\mbox{and}~~W=\mathrm{tr}(\bbB-\bbI_{p})^{2}, 
$$
respectively.  Under the null hypothesis, we refer to \cite{10.1214/09-AOS694,JiangY13C,Ledoit02,wang2013sphericity,OnatskiM13A} for the asymptotic properties of the LR and NT statistics for high-dimensional settings. In this section, we mainly focus on the alternative hypothesis $H_1$. However,  to provide a better comparison, we also present the asymptotic distributions under the null hypothesis in the following theorems:

\begin{thm}[CLT for LR]\label{thm3}
	Under Assumptions \ref{ass1}-\ref{ass4} with $c_{n}=p / n \rightarrow c \in(0,1)$, we have 
\begin{itemize}
	\item (Under $H_0$,) $$\dfrac{L-p\ell_L -\mu_{L}}{\sqrt{\varsigma_{L}^{2}}} \stackrel{d}{\longrightarrow} N(0,1),  $$
	where 
\begin{align*}
	\ell_L=1-\frac{c_{n}-1}{c_{n}} \log \left(1-c_{n}\right),~~~
	\mu_{L} = -\frac{\log \left(1-c_{n}\right)}{2}\al_{x}+\frac{c_{n}}{2}\beta_{x}
	\end{align*}
	and$$
	\varsigma_{L}^{2} =\frac{\al_{x}+1}{2}(-2 \log \left(1-c_{n}\right)-2 c_{n}).$$
    \item  (Under $H_1$,) $$\dfrac{L-(p-M)\breve{\ell}_L  -\breve{\mu}_{L}}{\sqrt{\breve{\varsigma}_{L}^{2}}} \stackrel{d}{\longrightarrow} N(0,1),  $$
	where 
	\begin{align*}
	\breve{\ell}_L=&1-\frac{c_{nM}-1}{c_{nM}} \log \left(1-c_{nM}\right)\\
\breve{\mu}_{L}=& -\frac{\log \left(1-c_{nM}\right)}{2}\al_{x}+\frac{c_{nM}}{2} \beta_{x} \\
&+\sum_{k=1}^{K}d_{k}\left( \phi_{n}\left({\al}_{k} \right)-\log\phi_{n}\left({\al}_{k} \right)-1 \right)-M(c_{n}+\log(1-c_{n})) \\
	\breve{\varsigma}_{L}^{2} =&\frac{\al_{x}+1}{2}(-2 \log \left(1-c_{nM}\right)-2 c_{nM})+\sum_{k=1}^{K}d_{k}\frac{2\left( \phi_{n}\left({\al}_{k} \right)-1\right)^{4}}{n\phi_{n}^{2}\left({\al}_{k} \right) }\\
	\phi_n\left({\al}_{k}  \right)=&{\al}_{k} +\frac{{\al}_{k} (p-M)}{n({\al}_{k} -1)}.
	\end{align*}	
\end{itemize}
%
\end{thm}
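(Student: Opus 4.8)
The plan is to derive Theorem \ref{thm3} as a direct specialization of Theorem \ref{thm1} (for the non-trivially renormalized single-LSS case) and Theorem \ref{thm2} with the kernel $f(x)=x-\log x-1$, noting that this $f$ is analytic on a neighbourhood of the support of $F^{c,H}$ for $c\in(0,1)$ (the support is bounded away from $0$) and satisfies Assumption \ref{ass4} since $f'(x)=1-1/x\to1$. Under $H_0$ the population covariance is $\bbI_p$, so there are no spiked eigenvalues, $H_n=\delta_1$, $H=\delta_1$, and $F^{c,H}$ is the Mar\v cenko--Pastur law; the LSD contributions to $L$ are then classical. Under $H_1$ we take $\bbD_2=\bbI_{p-M}$, so $H_{2n}=\delta_1$, $c_{nM}=(p-M)/n$, and $\bGma\bGma^*$ has $p-M$ eigenvalues equal to $1$; the spiked part contributes the $\alpha_k$-terms.

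First I would isolate the deterministic centering. Writing $L=\sum_j f(\lambda_j)=p\int f\,dF^{\bbB}$ and using the decomposition underlying $Y_1$, I have
$$L=p\int f\,dF^{c_n,H_n}+\sum_{k=1}^K d_k f(\phi_n(\al_k))+\frac{M}{2\pi i}\oint_{\mathcal C}f(z)\frac{\underline m'(z)}{\underline m(z)}\,dz+Y_1,$$
so the job is to evaluate each of the first three (deterministic) pieces explicitly for $f(x)=x-\log x-1$. The term $p\int f\,dF^{c_n,H_n}$ is the standard Mar\v cenko--Pastur integral: with parameter $c_{nM}$ (under $H_1$) it gives $(p-M)\big(1-\frac{c_{nM}-1}{c_{nM}}\log(1-c_{nM})\big)=(p-M)\breve\ell_L$ — this is the known evaluation $\int(x-\log x-1)\,dF^{c}(x)=1-\frac{c-1}{c}\log(1-c)$ for $c\in(0,1)$. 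Under $H_0$ the same computation with $c_n$ gives $p\ell_L$. The contour term $\frac{M}{2\pi i}\oint f(z)\frac{\underline m'}{\underline m}\,dz$ I would evaluate by residues using the explicit Stieltjes transform of the Mar\v cenko--Pastur law (here $\bbD_2=\bbI$ so $\underline m$ is explicit), which yields the $-M(c_n+\log(1-c_n))$ correction; for $\bbD_2=\bbI$ this is exactly the computation done in \cite{wang2013sphericity} and \cite{10.1214/14-AOS1292}, referenced in Remark \ref{simple result}. The spiked sum is simply $\sum_{k}d_k f(\phi_n(\al_k))=\sum_k d_k\big(\phi_n(\al_k)-\log\phi_n(\al_k)-1\big)$ with $\phi_n(\al_k)=\al_k+\frac{\al_k(p-M)}{n(\al_k-1)}$ from $\phi_n(x)=x(1+c_n\int\frac{t}{x-t}dH_n(t))$ evaluated at $H_n$ having an atom of mass $(p-M)/p$ at $1$ (the $M$ spiked atoms contribute $0$ since they sit at $\al_k$). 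Collecting these gives the centering $(p-M)\breve\ell_L+\breve\mu_L^{(\mathrm{det})}$ and leaves $Y_1$, whose mean $\mu_1$ and variance $\varsigma_1^2$ come from Theorem \ref{thm1} (or $\mu_L,\sigma_L^2$ from Theorem \ref{thm2}).

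Next I would compute the mean $\mu_1$ from \eqref{33} and the variance. With $H_{2n}=\delta_1$ and $\underline m_{2n0}$ the (explicit) companion Stieltjes transform of $F^{c_{nM},\delta_1}$, the integrals in \eqref{33} collapse to single-variable rational contour integrals in $z$ (equivalently, after the substitution used in Remark \ref{simple result}, to integrals over $|z|=1$ of $f(|1+\sqrt{c_{nM}}\,z|^2)$ against simple rational kernels), which are evaluated by residues: the $\alpha_x$-piece produces $-\frac{\log(1-c_{nM})}{2}\alpha_x$ via $I_1(f)$ and the $\beta_x$-piece produces $\frac{c_{nM}}{2}\beta_x$ via $I_2(f)$, using $I_1(x-\log x-1)=-\tfrac12\log(1-c)$ and $I_2(x-\log x-1)=\tfrac c2$. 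For the variance, $\kappa_{11}$ (or the $\vartheta_n^2$ double integral) reduces, for $\bbD_2=\bbI$, to $J_1(f,f)+J_2(f,f)$ scaled by $(\alpha_x+1)$ and $\beta_x$ respectively; since $f'(x)=1-1/x$, $J_2(f,f)=0$ (the residue of $f(|1+\sqrt c z|^2)/z^2$ at $0$ vanishes for this $f$), and $(\alpha_x+1)J_1(f,f)=\frac{\alpha_x+1}{2}(-2\log(1-c_{nM})-2c_{nM})$ by the same residue computation as in \cite{wang2013sphericity}. Finally the spiked contribution to the variance: from $\sigma_1^2=\sum_k\frac{\phi_n^2(\al_k)}{n}(f'(\phi_n(\al_k)))^2 s_k^2+\kappa_{11}$ with $f'(\phi)=1-1/\phi=(\phi-1)/\phi$ and, since $\bbD_2=\bbI$ so $\bGma\bGma^*=\bbI_{p-M}$ and $\underline F^{c,H}$ is Mar\v cenko--Pastur, one gets $\theta_k=\nu_k$, $\mathcal U_{jjjj}=\sum_t|u_{tj}|^4$, and the combination $s_k^2=(\alpha_x+1)/\theta_k+(\text{moment terms})$; for the real Gaussian-free reduction that the theorem statement uses one checks $\frac{\phi_n^2(\al_k)}{n}(f'(\phi_n(\al_k)))^2 s_k^2=\frac{2(\phi_n(\al_k)-1)^4}{n\phi_n^2(\al_k)}$ after the known evaluation of $\theta_k$ for the MP law (this uses $\phi_k^2\underline m_2(\phi_k)=\big(\tfrac{\phi_k-1}{\phi_k}\big)^{-2}\cdot\tfrac12$-type identities, i.e. $1/\theta_k=\tfrac12\,(\text{something})$). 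Assembling all pieces yields $\breve\mu_L$ and $\breve\varsigma_L^2$, and the CLT follows from Theorem \ref{thm1}/\ref{thm2}; the $H_0$ case is the $M=0$, $c_{nM}=c_n$ specialization.

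I expect the main obstacle to be the bookkeeping in the spiked-variance term: correctly identifying $\theta_k$, $\nu_k$, and the $\mathcal U$-factors when $\bGma\bGma^*=\bbI_{p-M}$ and verifying that $\sum_{j\in J_k}\big((\alpha_x+1)\theta_k+\beta_x\mathcal U_{jjjj}\nu_k\big)+\sum_{j_1\ne j_2}\beta_x\mathcal U_{j_1j_1j_2j_2}\nu_k$ over $\theta_k^2$, times $\frac{\phi_n^2(\al_k)}{n}(f'(\phi_n(\al_k)))^2$, collapses to the clean expression $d_k\frac{2(\phi_n(\al_k)-1)^4}{n\phi_n^2(\al_k)}$ — this requires the explicit Mar\v cenko--Pastur value of $\underline m(\phi_k)$ and $\underline m_2(\phi_k)$ and the cancellation of the $\mathcal U$-sums (which hold because under Assumptions \ref{ass5}--\ref{ass6} the relevant $\alpha_x$- or $\beta_x$-dependent pieces either vanish or combine with the eigenvector delocalization $\sum_{j}\mathcal U_{j_1j_1j_2j_2}=$ row-sum of $|\bbV_1^*\bbV_1|$-type quantities equal to $d_k$). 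A secondary but routine obstacle is carefully tracking the contour-integral residue computations for $f(x)=x-\log x-1$, in particular keeping the branch of $\log$ consistent and confirming that the spiked eigenvalues lie outside the contour $\mathcal C$ (so they do not contribute spurious residues) — this is guaranteed since $\al_k\to\infty$ while $\mathcal C$ encloses only the bounded bulk support.
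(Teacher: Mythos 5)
Your proposal follows essentially the same route as the paper: decompose $L$ via the centering terms defining $Y_1$, evaluate the Mar\v{c}enko--Pastur integral, the spiked sum, and the contour term $\frac{M}{2\pi i}\oint f_L\,\underline{m}'/\underline{m}\,dz=-M(c_n+\log(1-c_n))$ by residues, and then invoke Theorem \ref{thm1} with the $I_1,I_2,J_1,J_2$ evaluations of Remark \ref{simple result} (the paper handles $H_0$ by citing the substitution principle of Zheng--Bai--Yao, which is your $M=0$ specialization). Your residue computations ($I_2(f_L)=c/2$, $J_2(f_L,f_L)=0$) and the identification of the spiked variance term are consistent with the paper's (uncomputed) assembly, so the proposal is correct.
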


\begin{remark}
	If $ c>1$, then $\bbB_n$ is singular for large $n$, which gives rise to the undefined LR statistic $L$. Thus, in Theorem \ref{thm3}, we add an additional restriction $c<1$. 
\end{remark}
%
%
%
%
%
%
\begin{thm}[CLT for NT]\label{thm4}
	Under Assumptions \ref{ass1}-\ref{ass4}, we have
\begin{itemize}
	\item (Under $H_0$,) $$ \frac{W-p\ell_{W}-\mu_{W}}{ \sqrt{\varsigma_{W}^{2}} }\stackrel{d}{\longrightarrow} N(0,1),   $$
where 
	\begin{align*}
		\ell_{W}=c_{n},~~
		\mu_{W} =\al_{x}c_{n}+\beta_{x}c_{n}~~\mbox{and}~~		\varsigma_{W}^{2} =(\al_{x}+1)(4c_{n}^{3}+2c_{n}^{2})+4\beta_{x}c_{n}^{3}.
	\end{align*}

	\item  (Under $H_1$,)
	\begin{align*}
		\frac{W-(p-M)\breve{\ell}_{W} -\breve{\mu}_{W}}{\sqrt{\breve{\varsigma}_{W}^{2}}}\stackrel{d}{\longrightarrow} N(0,1), 
    \end{align*}	
where  \begin{align*}
	\breve{\ell}_{W}&=c_{nM},\\
	\breve{\mu}_{W}&=\al_{x}c_{nM}+\beta_{x}c_{nM}+\sum_{k=1}^{K}d_{k}\left( \phi_{n}^{2}\left({\al}_{k} \right)-2\phi_{n}\left({\al}_{k} \right)+1 \right)-M c_{n}^{2}\\
	\breve{\varsigma}_{W}^{2}&=(\al_{x}+1)(4c_{nM}^{3}+2c_{nM}^{2})+4\beta_{x}c_{nM}^{3}+\sum_{k=1}^{K}d_{k}\frac{8\left( \phi_{n}\left({\al}_{k} \right)-1\right)^{4}}{n}\\
	\phi_n\left({\al}_{k}  \right)&={\al}_{k} +\frac{{\al}_{k} (p-M)}{n({\al}_{k} -1)}.
	\end{align*}
	
\end{itemize}
%
%
\end{thm}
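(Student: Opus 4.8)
The plan is to recognize that Nagao's statistic $W=\mathrm{tr}(\bbB-\bbI_p)^2=\sum_{j=1}^{p}f(\lambda_j)$ is the LSS attached to the single analytic kernel $f(x)=(x-1)^2=x^2-2x+1$, which satisfies Assumption \ref{ass4} since $f'(x)/f'(y)=(x-1)/(y-1)\to1$ whenever $x/y\to1$ with $x,y\to\infty$. Hence Theorem \ref{thm1} applies with $h=1$ and $f_1=f$. Unfolding $Y_1$ from its definition and using $\int f\,dG_n=W-p\int f\,dF^{c_n,H_n}$ yields
$$W=Y_1+p\int f\,dF^{c_n,H_n}+\sum_{k=1}^{K}d_k f(\phi_n(\al_k))+\frac{M}{2\pi i}\oint_{\mathcal C}f(z)\frac{\underline m'(z)}{\underline m(z)}\,dz,$$
so Theorem \ref{thm1} already gives a CLT for $W$ with variance $\varsigma_1^2$ and centering $\mu_1+p\int f\,dF^{c_n,H_n}+\sum_k d_k f(\phi_n(\al_k))+\frac{M}{2\pi i}\oint_{\mathcal C}f\,\underline m'/\underline m$. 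The rest of the argument is the identification of these pieces in closed form for $f=(x-1)^2$ in both scenarios.

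Under $H_0$ we have $\bSi=\bbI_p$, so $K=M=0$, the spike sum and the $\oint$-term vanish, $F^{c_n,H_n}$ is the Marchenko--Pastur law, and Theorem \ref{thm1} collapses to the classical CLT. The centering is $p\int(x-1)^2\,dF^{c_n,\delta_1}=pc_n$, and $\mu_1,\varsigma_1^2$ are given by Remark \ref{simple result} with $M=0$. Substituting $g(z):=f(|1+\sqrt c z|^2)=c(z+z^{-1})^2+2c^{3/2}(z+z^{-1})+c^2$ into $I_1,I_2,J_1,J_2$ and reading off residues on $|z|=1$ one checks $I_1(f)=I_2(f)=c$, $J_1(f,f)=4c^3+2c^2$, $J_2(f,f)=4c^3$, which produce exactly $\ell_W=c_n$, $\mu_W=\al_x c_n+\beta_x c_n$ and $\varsigma_W^2=(\al_x+1)(4c_n^3+2c_n^2)+4\beta_x c_n^3$. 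No restriction $c<1$ is needed here (unlike for $L$): the only effect of $c\ge1$ is an atom at $0$ in the support of $F^{c,H}$, already enclosed by the contours.

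Under $H_1$ we have $\bbD_2=\bbI_{p-M}$, so $\bGma\bGma^*=\bbV_2\bbV_2^*$ is a rank-$(p-M)$ projection, $H_n=\tfrac{p-M}{p}\delta_1+\tfrac Mp\delta_0$ and $H_{2n}=\delta_1$. The term $\mu_1$ and the double-contour part of $\varsigma_1^2$ involve only the Stieltjes transforms of $F^{c_{nM},\delta_1}$, so they are the same residue computations as under $H_0$ but with $c_n$ replaced by $c_{nM}=(p-M)/n$, producing the $\al_x c_{nM}+\beta_x c_{nM}$ part of $\breve\mu_W$ and the $(\al_x+1)(4c_{nM}^3+2c_{nM}^2)+4\beta_x c_{nM}^3$ part of $\breve\varsigma_W^2$. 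The spike centering is $\sum_k d_k f(\phi_n(\al_k))=\sum_k d_k(\phi_n^2(\al_k)-2\phi_n(\al_k)+1)$ with $\phi_n(\al_k)=\al_k+\al_k(p-M)/(n(\al_k-1))$. The remaining correction $-Mc_n^2$ comes from combining two computations: first, from $\int x\,dF^{c_n,H_n}=(p-M)/p$ and $\int x^2\,dF^{c_n,H_n}=\tfrac{p-M}{p}(1+c_{nM})$ one gets $p\int f\,dF^{c_n,H_n}=(p-M)c_{nM}+M$; second, deforming $\mathcal C$ to infinity and expanding $\underline m'(z)/\underline m(z)=-z^{-1}-c z^{-2}-(2c+c^2)z^{-3}-\cdots$ gives $\tfrac1{2\pi i}\oint_{\mathcal C}(z-1)^2\,\underline m'/\underline m\,dz=-(c^2+1)$, so the two together equal $(p-M)c_{nM}-Mc^2$, matching $\breve\mu_W$ up to the $O(M^2/n)$ discrepancy between $c_n$ and $c_{nM}$.

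The delicate step is the spike part of the variance, $\sum_k\tfrac{\phi_n^2(\al_k)}{n}(f'(\phi_n(\al_k)))^2 s_k^2=\sum_k\tfrac{4\phi_n^2(\al_k)(\phi_n(\al_k)-1)^2}{n}s_k^2$, which must be shown to equal $\sum_k d_k\tfrac{8(\phi_n(\al_k)-1)^4}{n}$, i.e.\ $s_k^2=2d_k(\phi_n(\al_k)-1)^2/\phi_n^2(\al_k)$ up to lower order. Using the spiked-model identity $\underline m_{2n0}(\phi_n(\al_k))=-1/\al_k$ one gets $\nu_k=(\phi_n(\al_k)/\al_k)^2$ and $\theta_k=\nu_k/(1-c_{nM}/(\al_k-1)^2)$, so $\theta_k,\nu_k\to1$ and $(\phi_n(\al_k)-1)^2/\phi_n^2(\al_k)\to1$ as $\al_k\to\infty$; it then remains to control the eigenvector functionals $\mathcal U_{jjjj},\mathcal U_{j_1j_1j_2j_2}$ in $s_k^2$ (for the natural choice of $\bbT$ making $\bbU$ a coordinate frame one has $\mathcal U_{jjjj}=1$ and $\mathcal U_{j_1j_1j_2j_2}=0$ for $j_1\ne j_2$, so the numerator of $s_k^2$ collapses to $d_k$ times a spike-independent constant) and to check that the $O(1/\al_k)$ remainders, together with the $\al_x$- and $\beta_x$-dependent lower-order terms, are absorbed by the normalization $\sqrt{\breve\varsigma_W^2}$. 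Showing that the eigenvector and non-Gaussian contributions to $s_k^2$ become negligible relative to its leading term as the spikes diverge is where the main effort lies. The CLT for the LRT (Theorem \ref{thm3}) is obtained along identical lines with $f(x)=x-\log x-1$ and the closed forms of \cite{wang2013sphericity} for the bulk integrals, the only new ingredient being the restriction $c<1$ ensuring $L$ is well defined.
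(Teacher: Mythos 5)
Your proposal is correct and follows essentially the same route as the paper: specialize Theorem \ref{thm1} to $f(x)=(x-1)^2$, evaluate the bulk constants $I_1,I_2,J_1,J_2$ by residue calculus on $|z|=1$ (your values agree with the paper's Appendix \ref{appB}), and read off the spike centering $\sum_k d_k f(\phi_n(\al_k))$. The only differences are bookkeeping that cancels: you count the mass-$\tfrac{M}{p}$ atom of $F^{c_n,H_n}$ at the origin inside $p\int f\,dF^{c_n,H_n}$ (obtaining $(p-M)c_{nM}+M$) and correspondingly let the contour in $\oint f\,\underline{m}'/\underline{m}$ enclose the origin (obtaining $-M(c^2+1)$ where the paper, excluding the origin and invoking Lemma \ref{lemma1}, gets $(p-M)c_{nM}$ and $-Mc_n^2$ separately), the two extra $\pm M$ terms cancelling in the total centering; and your explicit reduction of the spike variance via $\underline{m}(\phi_k)=-1/\al_k$ and $\theta_k,\nu_k\to 1$ is precisely the step the paper leaves implicit in passing from $\phi_n^2(f')^2 s_k^2/n$ to $8d_k(\phi_n(\al_k)-1)^4/n$ (and your caution about the $\beta_x$-weighted eigenvector functionals in $s_k^2$ is warranted, since those terms are not lower order unless $\beta_x=0$ or $\mathcal{U}$ degenerates --- a point the paper's own proof does not address either).
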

\begin{remark}\label{remarkonespike}
For illustration, 
we study the asymptotic properties of the LR and NT statistics under a simplified spiked population model \citep{10.1214/aos/1009210544}, where the population covariance matrix is diagonal, with only one spiked eigenvalue, i.e.,
\begin{align} \label{alter2}
	\bSi=\operatorname{diag}(\al_{1},1,\dots,1)  
\end{align}
Let $ z_{a} $ be the upper $ a\% $ quantile of the standard Gaussian distribution $ \Phi$ and $\phi_{1}=\al_{1}+\frac{c\al_{1}}{\al_{1}-1} $. It is straightforward from Theorem \ref{thm3} that the asymptotic power of the LR statistic under the alternative hypothesis \eqref{alter2} is


 $$  \Phi( \frac{ -c+  \left( \phi_{1}-\log\phi_{1}-1 \right)-z_{a}\sqrt{\frac{\al_{x}+1}{2} (-2\log(1-c)-2c) }  }{ \sqrt{\frac{\al_{x}+1}{2} (-2\log(1-c)-2c)  +\frac{2}{n}\frac{(\phi_{1}-1)^{4}}{\phi_{1}^{2}}}   }   )  .$$
Analogously, the asymptotic power of the NT statistic under the alternative hypothesis \eqref{alter2} can be formulized as 
 $$   \Phi(\dfrac{-2c+ (\phi\left(\al_{1} \right)-1)^{2} -c^{2}-z_{a}\sqrt{(\al_{x}+1)(4c^{3}+2c^{2})+4\beta_{x}c^{3} }  }{ \sqrt{(\al_{x}+1)(4c^{3}+2c^{2} )+4\beta_{x}c^{3}+\frac{8}{n}(\phi(\al_{1})-1)^{4}  }  }  ). $$ 
It is easy to find that if $\alpha_1\to\infty$, the asymptotic power of the LR and NT statistics tend to 1 of order $\min\{\alpha_1,\sqrt{n}\}$ and $\min\{\alpha_1^2,\sqrt{n}\}$, respectively. 

\end{remark}

\section{Simulation}\label{section 4}
In this section, we conducted a number of simulation studies  to examine the asymptotic distributions of statistics $L $ and $W$ under $ H_{0} $ and $ H_{1} $ in Section \ref{section 5}. For brevity, under $ H_{1} $, we focus on a simplified spiked population with only one spiked eigenvalue $ \al_{1} $. In our experiments, we define $ \bbZ = \bbT\bbX $, where $ \bbX=(x_{ij})_{p\times n}  $, and $x_{ij}$ are i.i.d. standard normality distributions.
All the simulations are based on $ c=\frac{1}{3} $ and 1,000 repetitions.

The theories in Section \ref{section 5} reveal that for the LR statistic $L= \mathrm{tr}\bbB-\log\left|\bbB \right| -p $, when the spiked eigenvalue is weak, there is only a constant shift in the mean term, but the covariance term remains unchanged. When the spiked eigenvalue divergences of order $ n^{\frac{1}{2}} $, the mean term has a divergence shift, and the covariance term has a constant shift. When $ {\al_{1}}/{n^{\frac{1}{2}}}  $ tends to infinity, the asymptotic distribution is dominated by the spiked eigenvalue. For these properties, we set three cases in the simulation, which are :
\begin{align*}
\mbox{Case 1:}~\alpha_1=3,~~  \mbox{Case 2:}~\alpha_1=n^{1/2},~~ \mbox{Case 3:}~\alpha_1=n^{2/3}. 
\end{align*}
Similar to the discussion of the LR statistic, we know that the distinguishing divergence rate of $\alpha_1$ in the NT statistic $W=\mathrm{tr}(\bbB-\bbI_{p})^{2}$ is $n^{1/4}$. Thus, to examine the properties of $W$,  we set three cases:
\begin{align*}
\mbox{Case 4:}~\alpha_1=3,~~  \mbox{Case 5:}~\alpha_1=n^{1/4},~~ \mbox{Case 6:}~\alpha_1=n^{1/3}. 
\end{align*}
All the results are presented in Figures 1-6.
Note that the kernel density and normal density under each setting are compared using the same color.

We highlight two observations from Figures 1-6. First, all the curves fit the limiting distribution well when the dimension is sufficiently large, which is consistent with Theorem \ref{thm1} and Theorem \ref{thm2}. Second, 
   by comparing Figures 1-3 with Figures 4-6, we find that the NT statistic is more powerful than the LR statistic for the spiked alternative hypothesis, which indicates that different kernel function choices are also important for the testing problem.

\begin{figure}
	\centering
	\begin{minipage}[t]{0.3\textwidth}
		\centering
		\includegraphics[width=4.5cm]{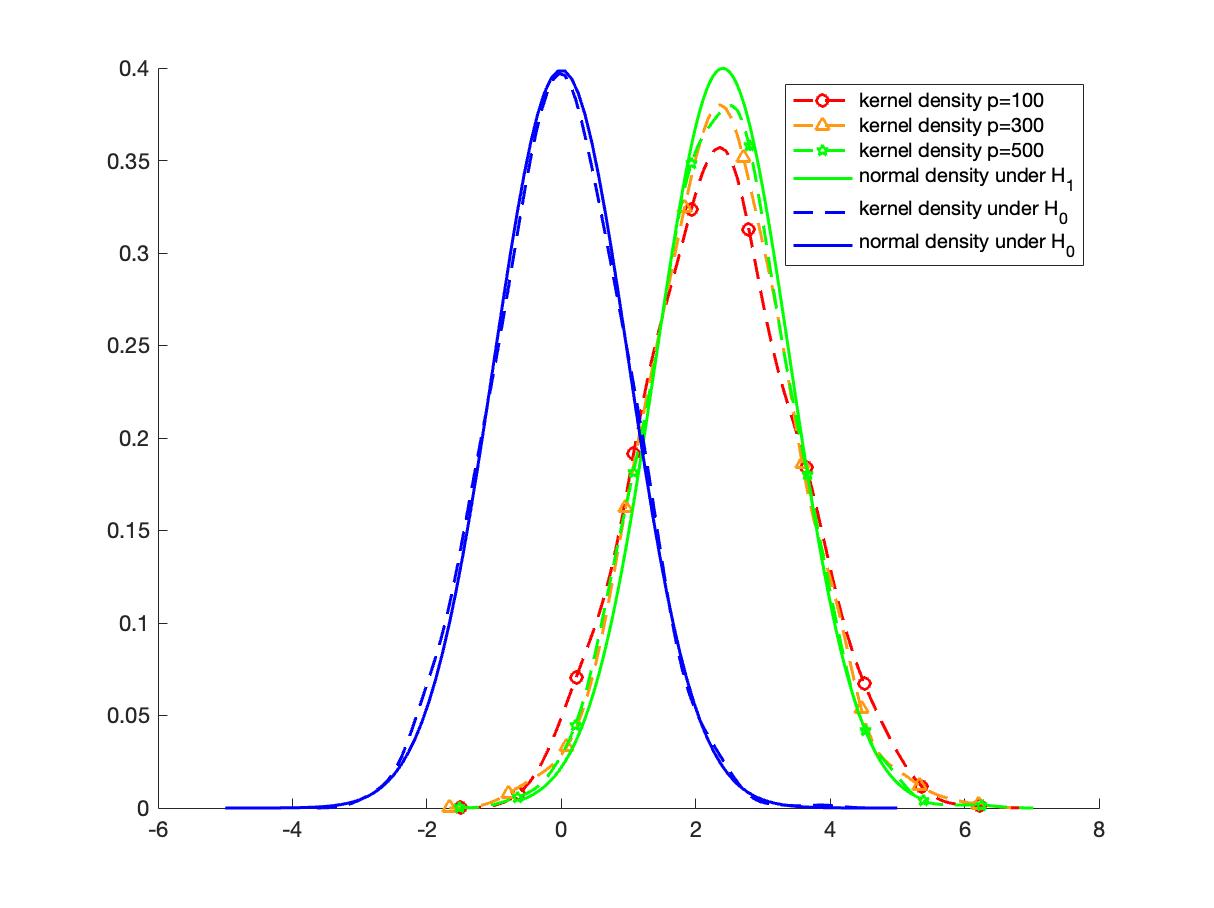}
		\captionsetup{font={scriptsize}}
		\caption{Case 1}
	\end{minipage}
	\begin{minipage}[t]{0.3\textwidth}
		\centering
		\includegraphics[width=4.5cm]{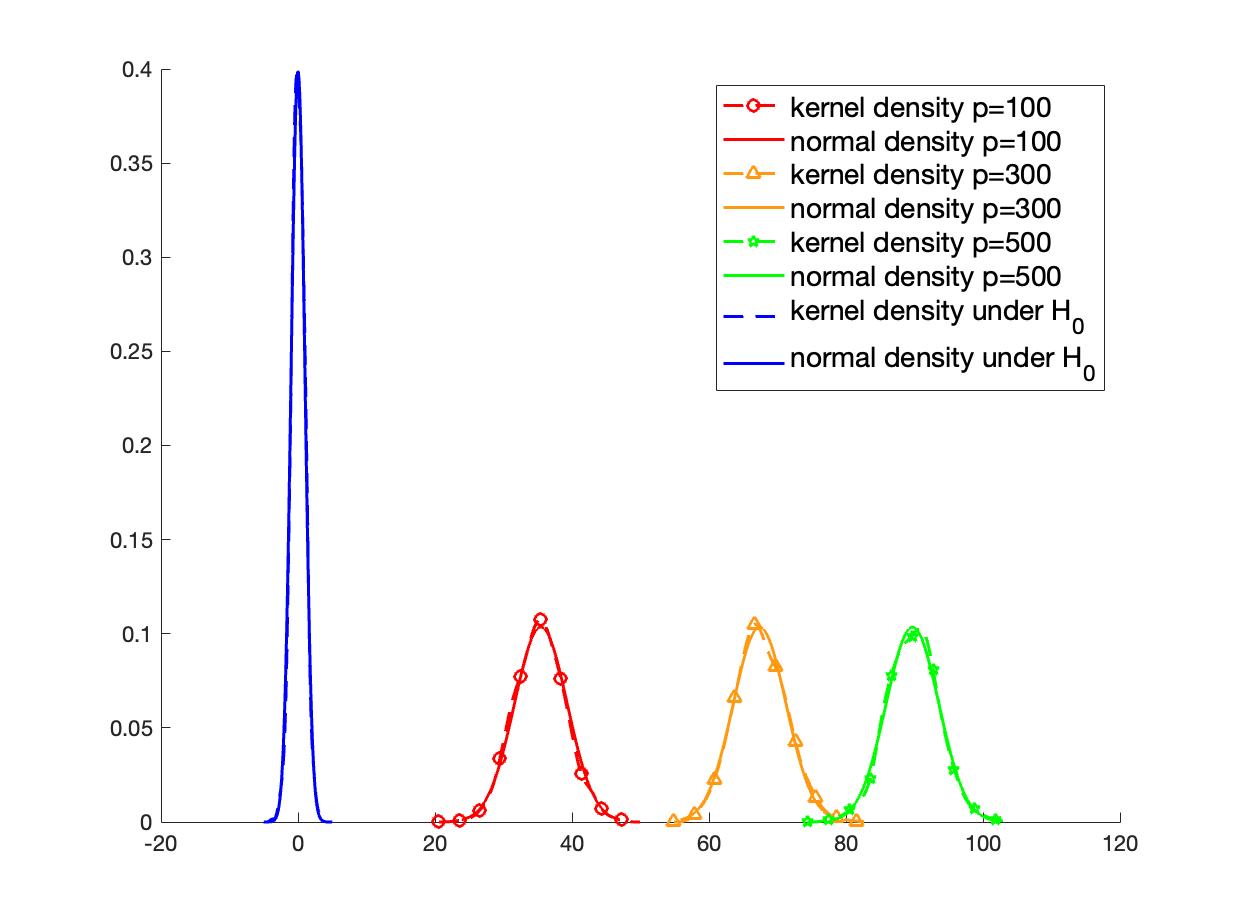}
		\captionsetup{font={scriptsize}}
		\caption{Case 2}
	\end{minipage}
	\begin{minipage}[t]{0.3\textwidth}
		\centering
		\includegraphics[width=4.5cm]{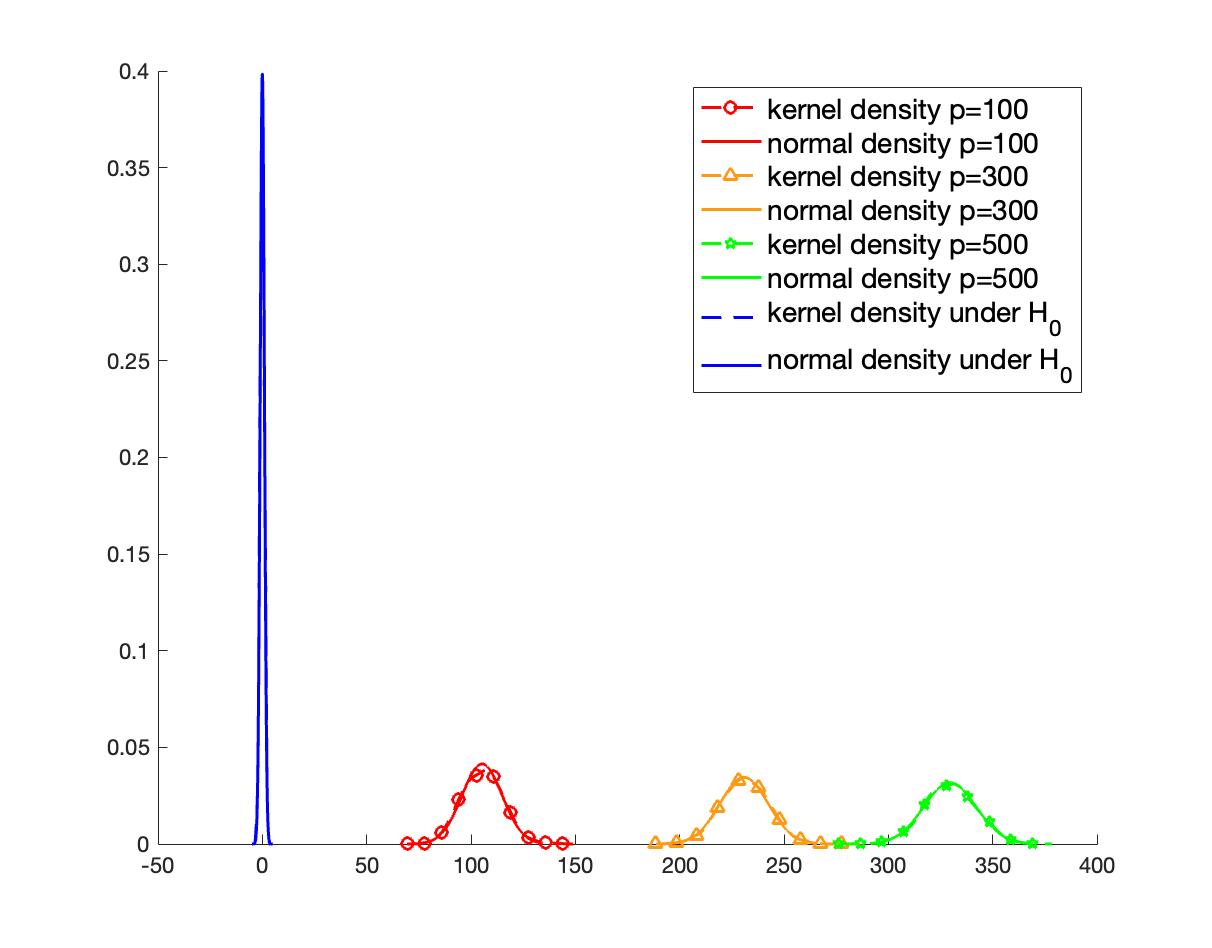}
		\captionsetup{font={scriptsize}}
		\caption{Case 3}
	\end{minipage}
	
	\begin{minipage}[t]{0.3\textwidth}
		\centering
		\includegraphics[width=4.5cm]{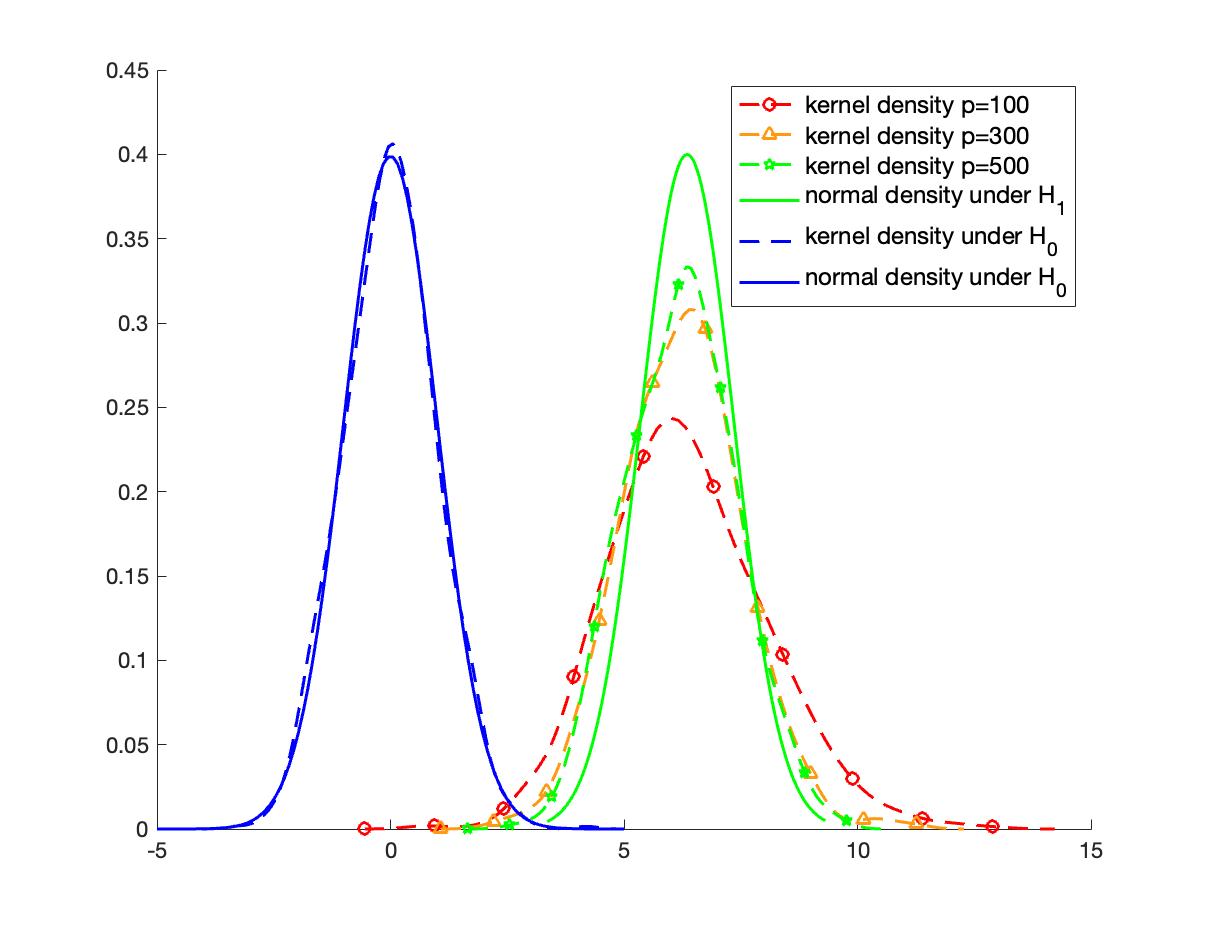}
		\captionsetup{font={scriptsize}}
		\caption{Case 4}
	\end{minipage}
	\begin{minipage}[t]{0.3\textwidth}
		\centering
		\includegraphics[width=4.5cm]{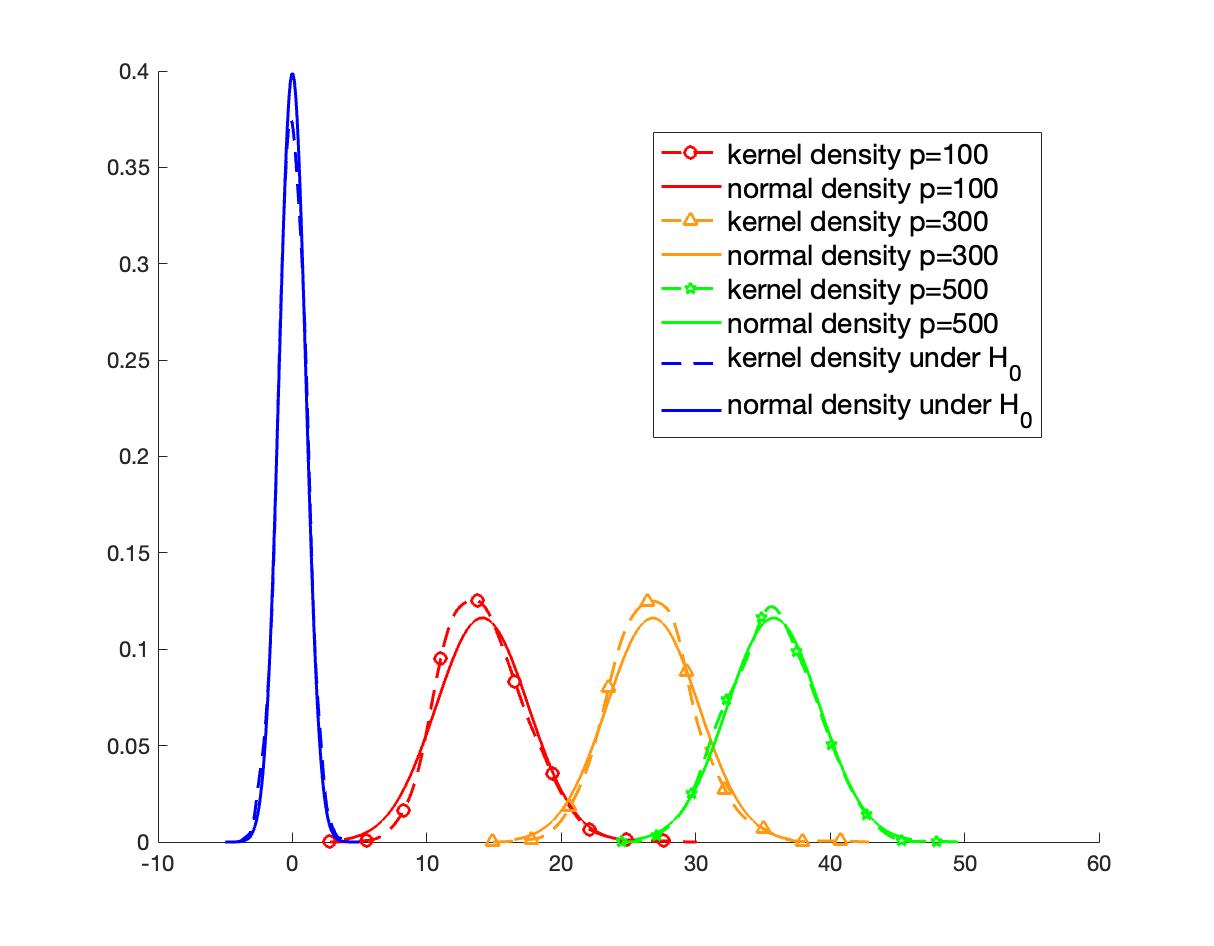}
		\captionsetup{font={scriptsize}}
		\caption{Case 5}
	\end{minipage}
	\begin{minipage}[t]{0.3\textwidth}
		\centering
		\includegraphics[width=4.5cm]{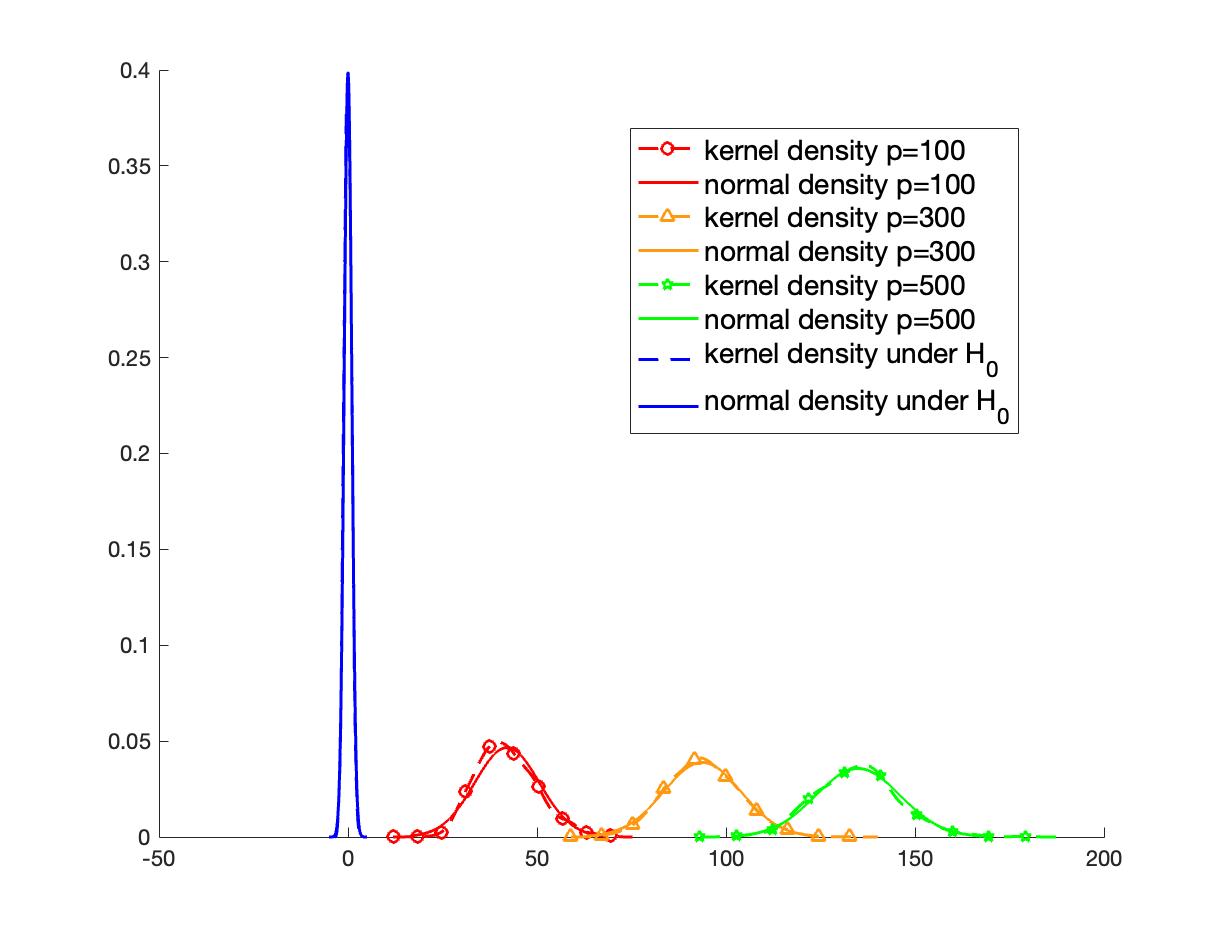}
		\captionsetup{font={scriptsize}}
		\caption{Case 6}
	\end{minipage}
	\caption*{Comparisons of kernel estimated density functions and their limits}

\end{figure}

\section{Technical proofs}\label{section 6}
In this section, we present the proofs of Theorems \ref{thm1}, \ref{thm2}, \ref{thm3} and \ref{thm4}. First, we truncate and renormalize the random variables to ensure the existence of their higher order moments. 
\subsection{ Truncation and renormalization}
Let $\hat{x}_{i j}=x_{i j} \mathbbm{1}_{\left\lbrace \left|x_{i j}\right|<\eta_{n} \sqrt{n}\right\rbrace} $ and $\tilde{x}_{i j}=\frac{\hat{x}_{i j}-\mathbbm{E} \hat{x}_{i j}}{ \hat\sigma_{n}}$, where $\hat\sigma_{n}^{2}=\mathbbm{E}\left|\hat{x}_{i j}-\mathbbm{E} \hat{x}_{i j}\right|^{2}$ and $\eta_{n} \rightarrow 0$ with a slow rate. Correspondingly, define $ \hat{\bbB}=\frac{1}{n}\bbT\hat{\bbX}\hat{\bbX}^{\ast}\bbT^{\ast} $ and $ \tilde{\bbB}=\frac{1}{n}\bbT\tilde{\bbX}\tilde{\bbX}^{\ast}\bbT^{\ast} $, where $ \hat{\bbX}=(\hat{x}_{i j})$ and $ \tilde{\bbX}=(\tilde{x}_{i j})$. $ \hat{G}_{n} $ and $ \tilde{G}_{n} $  denote the analogues of $ G_{n} $ with the matrix $ \bbB $ replaced by $ \hat{\bbB} $ and $ \tilde{\bbB} $, respectively. 
Next, we demonstrate that the entries of $\mathbf{X}$ in the LSSs are equivalent to and can be replaced by the truncated and renormalized entries.

According to the Lindeberg-type condition in Assumption \ref{ass1}, we obtain that as $\min\{n, p\} \rightarrow \infty$, 
\begin{align*}
		&\mathrm{P}(\bbB \neq \hat{\bbB})
		\leq \sum_{i, j} \mathrm{P}\left(\left|x_{i j}\right| \geq \eta_{n} \sqrt{n}\right)\rightarrow 0.
	\end{align*}
It follows from the definition of LSSs that for any $ l=1,\ldots,h $, 
\begin{align}  
&\left| \int f_{l}(x)d\hat{G}_{n}-\int f_{l}(x)d\tilde{G}_{n}(x)\right| =\sum_{i=1}^{p}\left| f_{l}(\lambda_{i}^{\hat{\bbB}})-f_{l}(\lambda_{i}^{\tilde{\bbB}})\right| \nonumber\\
\leq&\sum_{i=1}^{M}\left| f_{l}(\lambda_{i}^{\hat{\bbB}})-f_{l}(\lambda_{i}^{\tilde{\bbB}})\right|+\sum_{i=M+1}^{p}\left| f_{l}(\lambda_{i}^{\hat{\bbB}})-f_{l}(\lambda_{i}^{\tilde{\bbB}})\right|. \label{htGn}
\end{align}
Using the same discussion in \cite{10.1214/aop/1078415845}, we can easily obtain that the second term of \eqref{htGn} tends to 0 in probability. 
For the first term of \eqref{htGn}, from the arguments in Supplement B of \cite{JiangB21G}, we know that
\begin{align} \left|\lambda_{i}^{\hat{\bbB}}-\lambda_{i}^{\tilde{\bbB}}\right|=o_{p}(n^{-\frac{1}{2} } {\rho_{i}}) .\label{truncation}
\end{align} 
Then, for brevity, we denote $ \beta_{i}=({\lambda_{i}^{\hat{\bbB}}-\lambda_{i}^{\tilde{\bbB}}})/{\rho_{i}}$ and
 obtain that
\begin{align*}
	&f_l(\lambda_{i}^{\hat{\bbB}})-f_l(\lambda_{i}^{\tilde{\bbB}})	=\int_{0}^{\beta_{i}\rho_{i}}f_l^{\prime}(t+\lambda_{i}^{\tilde{\bbB}})	dt\\
	=&\int_{0}^{1}\beta_{i}\rho_{i}f_l^{\prime}(\beta_{i}\rho_{i}s+\lambda_{i}^{\tilde{\bbB}})ds
	=\beta_{i}\rho_{i}f_l^{\prime}(\rho_{i})\int_{0}^{1}\frac{f_l^{\prime}(\rho_{i}(\beta_{i}s+\frac{\lambda_{i}^{\tilde{\bbB}}}{\rho_{i}}) )}{f_l^{\prime}(\rho_{i})}ds,
\end{align*} 
which, together with Assumption \ref{ass4} and (\ref{truncation}), implies
 $$  \sqrt{n}\sum_{i=1}^{M}\dfrac{\left| f_{l}(\lambda_{i}^{\hat{\bbB}})-f_{l}(\lambda_{i}^{\tilde{\bbB}})\right|}{f_{l}^{\prime}(\rho_{i})\rho_{i}}= o_{p}(1).$$
Therefore, in the following proofs, we can safely assume that $ \left|x_{ij} \right|<\eta_{n}\sqrt{n}$. 

\subsection{Some primary definitions and lemmas}
In this section, we provide some useful results that will be used later in the proofs of Theorems \ref{thm1} and  \ref{thm2}. For the population covariance matrix $\bSi=\bbT\bbT^{\ast}$, consider the corresponding sample covariance matrix $ \bbB=\bbT\bbS_{x}\bbT^{\ast} $, where $ \bbS_{x}=\frac{1}{n}\bbX\bbX^{\ast}  $. 
By singular value decomposition of $ \bbT $ (see \eqref{decT}),
\begin{equation*}
	\bbB=\bbV\left(\begin{array}{cc}
		\bbD_{1}^\frac{1}{2}\bbU_{1}^{\ast}\bbS_{x}\bbU_{1}\bbD_{1}^\frac{1}{2}  &\bbD_{1}^\frac{1}{2}\bbU_{1}^{\ast}\bbS_{x}\bbU_{2}\bbD_{2}^\frac{1}{2}\\
		\bbD_{2}^\frac{1}{2}\bbU_{2}^{\ast}\bbS_{x}\bbU_{1}\bbD_{1}^\frac{1}{2}	&\bbD_{2}^\frac{1}{2}\bbU_{2}^{\ast}\bbS_{x}\bbU_{2}\bbD_{2}^\frac{1}{2}
	\end{array} \right)\bbV^{\ast}.
\end{equation*}
Note that
\begin{equation*}
	\bbS=\left(\begin{array}{cc}
		\bbD_{1}^\frac{1}{2}\bbU_{1}^{\ast}\bbS_{x}\bbU_{1}\bbD_{1}^\frac{1}{2}  &\bbD_{1}^\frac{1}{2}\bbU_{1}^{\ast}\bbS_{x}\bbU_{2}\bbD_{2}^\frac{1}{2}\\
		\bbD_{2}^\frac{1}{2}\bbU_{2}^{\ast}\bbS_{x}\bbU_{1}\bbD_{1}^\frac{1}{2}	&\bbD_{2}^\frac{1}{2}\bbU_{2}^{\ast}\bbS_{x}\bbU_{2}\bbD_{2}^\frac{1}{2}
	\end{array} \right)	\triangleq\left(\begin{array}{cc}\bbS_{11}&\bbS_{12}\\\bbS_{21}&\bbS_{22}\end{array} \right). 
\end{equation*}
Note that $ \bbB $ and $ \bbS $ have the same eigenvalues.

Recall that $\underline{\bbB}=\frac{1}{n}\bbX^{\ast}\bbT^{\ast}\bbT\bbX $ (the spectral of which differs from that of $ \bbB $ by $ \left|  n-p\right|  $ zeros). Its limiting spectral distribution is $ \underline{F}^{c,H} $, $\underline{F}^{c,H}\equiv\left(1-c \right)\mathbbm{1}_{\left[0,\infty \right) }+cF^{c,H}   $, and its Stieltjes transform is $ \underline{m}\left(z \right)  $.
Let $ \widetilde{\bgl}_{j} $ be the eigenvalues of $ \bbS_{22} $ so that the LSS of $ \bbS_{22} $ is $ \sum_{j=1}^{p-M}f( \widetilde{\bgl}_{j}) $. Correspondingly, recall that $ c_{nM}:=\frac{p-M}{n} $, $ H_{2n}:=F^{\bbD_{2}} $ and $ m_{2n0}:=m_{2n0}(z) $ are the 
Stieltjes transforms
 of $ F^{c_{nM},H_{2n}} $. Then, we have the following preliminary results:


\begin{lemma}\label{lemma1}
	Under Assumptions \ref{ass1}-\ref{ass4},  $$ \left( p-M\right)\int f\left(x \right)dF^{c_{nM},H_{2n}} =p\int f\left( x\right)dF^{c_{n},H_{n}}\left( x\right). $$
	
\end{lemma}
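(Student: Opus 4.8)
The plan is to reduce the claimed identity to an \emph{exact} relation between the two distributions $F^{c_{n},H_{n}}$ and $F^{c_{nM},H_{2n}}$ by comparing the Marchenko--Pastur--Silverstein equation \eqref{Sequation} at the parameter pairs $(c_{n},H_{n})$ and $(c_{nM},H_{2n})$. First I would relate the two populations. Since $\bGma=\bbV_{2}\bbD_{2}^{1/2}\bbU_{2}^{\ast}$ and the columns of $\bbU_{2},\bbV_{2}$ are orthonormal ($\bbU_{2}^{\ast}\bbU_{2}=\bbV_{2}^{\ast}\bbV_{2}=\bbI_{p-M}$), one has $\bGma\bGma^{\ast}=\bbV_{2}\bbD_{2}\bbV_{2}^{\ast}$, a $p\times p$ positive semidefinite matrix of rank $p-M$ whose nonzero eigenvalues are exactly those of $\bbD_{2}$; consequently
\[
 H_{n}=F^{\bGma\bGma^{\ast}}=\frac{M}{p}\,\mathbbm{1}_{[0,\infty)}+\frac{p-M}{p}\,H_{2n},
\]
that is, $H_{n}$ is $H_{2n}$ with an extra atom of mass $M/p$ at the origin; note also $c_{n}-c_{nM}=M/n$.

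Next I would show that the Stieltjes transforms $m_{1n0}$ and $m_{2n0}$ of $F^{c_{n},H_{n}}$ and $F^{c_{nM},H_{2n}}$, which by definition solve \eqref{Sequation} at $(c_{n},H_{n})$ and $(c_{nM},H_{2n})$ respectively, satisfy
\[
 m_{1n0}(z)=-\frac{M}{pz}+\frac{p-M}{p}\,m_{2n0}(z).
\]
To prove this, insert the right-hand side as a trial function into \eqref{Sequation} at $(c_{n},H_{n})$ and split $dH_{n}=\tfrac{M}{p}\delta_{0}+\tfrac{p-M}{p}\,dH_{2n}$: the $\delta_{0}$ term contributes exactly $-M/(pz)$, while on the $H_{2n}$ term the algebraic identity
\[
 1-c_{n}-c_{n}z\,m_{1n0}(z)=1-c_{nM}-c_{nM}z\,m_{2n0}(z)
\]
(immediate from $c_{n}-c_{nM}=M/n$ and the trial form of $m_{1n0}$) reduces the integrand to that of \eqref{Sequation} at $(c_{nM},H_{2n})$, so that term equals $\tfrac{p-M}{p}m_{2n0}(z)$; hence the trial function solves \eqref{Sequation} at $(c_{n},H_{n})$. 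Since $z\mapsto -M/(pz)+\tfrac{p-M}{p}m_{2n0}(z)$ maps $\mathbb{C}^{+}$ into $\mathbb{C}^{+}$, uniqueness of the solution of \eqref{Sequation} in that class (see \cite{Silverstein95S}) identifies it with $m_{1n0}$.

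Inverting the Stieltjes transforms then gives the exact relation $p\,F^{c_{n},H_{n}}=M\,\delta_{0}+(p-M)\,F^{c_{nM},H_{2n}}$ on $[0,\infty)$, so that $p\int f\,dF^{c_{n},H_{n}}=Mf(0)+(p-M)\int f\,dF^{c_{nM},H_{2n}}$; the lemma follows because in all the places where it is invoked $f$ is integrated over (a contour enclosing) the support of the bulk distribution $F^{c_{nM},H_{2n}}$, from which the origin is excluded, so the atom $M\delta_{0}$ is immaterial. I expect the main --- though minor --- obstacle to be exactly this bookkeeping at the origin: $F^{c_{n},H_{n}}$ genuinely carries the additional mass $M/p$ there, inherited from the $M$-dimensional kernel of $\bGma\bGma^{\ast}$, so one must be careful that the identity is read for integrands (or on contours) that avoid $0$; checking that the trial function in the second step belongs to the admissible solution class so that the uniqueness theorem applies is the other routine verification.
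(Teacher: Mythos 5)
Your proposal is correct and is essentially the paper's own argument in an equivalent dress: the paper passes to the companion transforms $\underline{m}_{1n0},\underline{m}_{2n0}$ and observes that the $M$ zero eigenvalues of $\bGma\bGma^{*}$ drop out of the self-consistent equation, so that $\underline{m}_{1n0}=\underline{m}_{2n0}$ and the two contour integrals coincide; your relation $m_{1n0}(z)=-\tfrac{M}{pz}+\tfrac{p-M}{p}m_{2n0}(z)$ is exactly this identity rewritten for the non-companion transforms, verified by the same substitution into the fixed-point equation. The one place you go beyond the paper is the bookkeeping at the origin: you are right that at the level of measures $p\,F^{c_{n},H_{n}}=M\delta_{0}+(p-M)F^{c_{nM},H_{2n}}$, so the stated equality carries a hidden $Mf(0)$ unless the integrals are read as contour integrals over curves enclosing only the bulk support (which excludes $0$ for $c<1$ and $\bbD_2$ bounded below); this is indeed how the lemma is used, and the paper's proof silently relies on that convention in its Cauchy-formula step, so your caveat is a legitimate clarification rather than a defect of your argument.
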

\begin{proof}
	By the Cauchy integral formula, 
	$$ p\int f(x)dF^{c_{n},H_{n}}=-\frac{p}{2 \pi i}\oint_\mathcal{C}f(z)m_{1n0}dz=-\frac{n}{2 \pi i}\oint_\mathcal{C}f(z)\underline{m}_{1n0}dz, $$ $$ (p-M)\int f(x)dF^{c_{nM},H_{2n}}=-\frac{p-M}{2 \pi i}\oint_\mathcal{C}f(z)m_{2n0}dz=-\frac{n}{2 \pi i}\oint_\mathcal{C}f(z)\underline{m}_{2n0}dz, $$
	where $\underline{m}_{1n0} $ and $ \underline{m}_{2n0}$ are the Stieltjes transforms of $\underline{F}^{c_{n},H_{n}} $ and $ \underline{F}^{c_{nM},H_{2n}}$, respectively.
	Then,  $$ (p-M)\int f(x)dF^{c_{nM},H_{2n}}-p\int f(x)dF^{c_{n},H_{n}}=\frac{n}{2 \pi i}\oint_\mathcal{C}f(z)\left(\underline{m}_{1n0}-\underline{m}_{2n0}\right) dz. $$
	Next, we prove that $\underline{m}_{1n0}=\underline{m}_{2n0}$.
	
	Note that
	$ m_{1n0} $ and $ m_{2n0} $ are the unique solutions to
	\begin{align}
		z=-\frac{1}{\underline{m}_{1n0}}+c_{n}\int\frac{tdH_{n}\left( t\right) }{1+t\underline{m}_{1n0}}\label{1}\\     
		z=-\frac{1}{\underline{m}_{2n0}}+c_{nM}\int\frac{tdH_{2n}\left( t\right) }{1+t\underline{m}_{2n0}} \label{2}, 
	\end{align}
	respectively,
	where $ \underline{m}_{1n0}=-\frac{1-c_{n}}{z}+c_{n}m_{1n0} $ and $ \underline{m}_{2n0}=-\frac{1-c_{nM}}{z}+c_{nM}m_{2n0} $. 
	Since $$  H_{n}(t)=\frac{1}{p}\left[\sum_{i=1}^{M}\mathbbm{1}_{\left\lbrace 0\leq t\right\rbrace }+\sum_{i=M+1}^{p}\mathbbm{1}_{\left\lbrace \alpha_{i}\leq t\right\rbrace } \right]=\frac{M}{p}+\frac{1}{p}\sum_{i=M+1}^{p}\mathbbm{1}_{\left\lbrace \alpha_{i}\leq t\right\rbrace }  $$ and $ H_{2n}(t)=\frac{1}{p-M}\sum_{i=M+1}^{p}\mathbbm{1}_{\left\lbrace\alpha_{i}\leq t \right\rbrace } $,
	  (\ref{1}) can be written as
	\begin{align}
		\nonumber z&=-\frac{1}{\underline{m}_{1n0}}+\frac{p}{n}\int\frac{td\left( \frac{M}{p}+\frac{1}{p}\sum_{i=M+1}^{p}\mathbbm{1}_{\left\lbrace \alpha_{i}\leq t\right\rbrace }\left( t\right)\right)  }{1+t\underline{m}_{1n0}}\\	
		\nonumber&=-\frac{1}{\underline{m}_{1n0}}+\frac{p}{n}\int\frac{td\left( \frac{1}{p}\sum_{i=M+1}^{p}\mathbbm{1}_{\left\lbrace \alpha_{i}\leq t\right\rbrace }\left( t\right)\right)  }{1+t\underline{m}_{1n0}}\\
		&=-\frac{1}{\underline{m}_{1n0}}+\frac{1}{n}\sum_{i=M+1}^{p}\frac{\alpha_{i}}{1+\al_{i}\underline{m}_{1n0}}.\label{3}
	\end{align}	
	Similarly, equation (\ref{2}) can be written as 
	\begin{align}
		z=-\frac{1}{\underline{m}_{2n0}}+\frac{1}{n}\sum_{i=M+1}^{p}\frac{\al_{i}}{1+\al_{i}\underline{m}_{2n0}}.\label{4}
	\end{align}	
Thus, according to the fact that $ m_{1n0} $ and $ m_{2n0} $ are the unique solutions of \eqref{3} and \eqref{4}, respectively, we have $ m_{1n0} = m_{2n0} $, which completes the proof of this lemma.
\end{proof}

\begin{lemma}\label{lemma2} \textit{Under Assumptions \ref{ass1}-\ref{ass4}}, 
$$\sum_{j=M+1}^{p}f\left( \lambda_{j}\right)   - \sum_{j=1}^{p-M}f\left( \widetilde{\bgl}_{j}\right)- \frac{M}{2\pi i}\oint_\mathcal{C}f\left(z \right)\frac{\underline{m}^{'}(z)}{\underline{m}(z)}dz=o_p(1).$$
\end{lemma}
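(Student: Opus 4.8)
\emph{Strategy.} The plan is to compare the two linear spectral statistics via the Cauchy (contour-integral) representation of the LSS and a Schur-complement analysis of the block matrix $\bbS$ with blocks $\bbS_{11},\bbS_{12},\bbS_{21},\bbS_{22}$ introduced above. First I would record the resolvent representation. Since $\alpha_{K}\to\infty$, exact-separation and no-eigenvalue-outside-support arguments (as in \cite{10.1214/aop/1078415845} and Supplement B of \cite{JiangB21G}) show that, with probability tending to one, the $M$ largest eigenvalues $\lambda_{1},\dots,\lambda_{M}$ of $\bbB$ diverge and hence leave the fixed contour $\mathcal{C}$, while $\lambda_{M+1},\dots,\lambda_{p}$ and all eigenvalues $\widetilde{\lambda}_{1},\dots,\widetilde{\lambda}_{p-M}$ of $\bbS_{22}$ stay inside $\mathcal{C}$; I also take $\mathcal{C}$ not to enclose the origin, so that (when $c>1$) the zero eigenvalues of $\bbB$ and of $\bbS_{22}$ enter the two statistics in equal number and cancel. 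By the Cauchy formula,
$$\sum_{j=M+1}^{p}f(\lambda_{j})-\sum_{j=1}^{p-M}f(\widetilde{\lambda}_{j})=-\frac{1}{2\pi i}\oint_{\mathcal{C}}f(z)\Bigl[\mathrm{tr}(\bbB-z\bbI_{p})^{-1}-\mathrm{tr}(\bbS_{22}-z\bbI_{p-M})^{-1}\Bigr]dz+o_{p}(1),$$
and, using $\mathrm{tr}(\bbB-z\bbI)^{-1}=\mathrm{tr}(\bbS-z\bbI)^{-1}$ together with Schur's identity $\det(\bbS-z\bbI)=\det(\bbS_{22}-z\bbI)\det{\bf W}(z)$ with ${\bf W}(z)=\bbS_{11}-z\bbI_{M}-\bbS_{12}(\bbS_{22}-z\bbI_{p-M})^{-1}\bbS_{21}$, the bracket equals $-\tfrac{d}{dz}\log\det{\bf W}(z)$, so the left-hand side becomes $\tfrac{1}{2\pi i}\oint_{\mathcal{C}}f(z)\tfrac{d}{dz}\log\det{\bf W}(z)\,dz+o_{p}(1)$.

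Next I would reduce ${\bf W}(z)$ to an $M\times M$ object built only from the bounded block. Factoring $\bbD_{1}^{1/2}$ out of $\bbS_{11},\bbS_{12},\bbS_{21}$ and applying the resolvent identity $\bbZ_{2}^{*}(\tfrac1n\bbZ_{2}\bbZ_{2}^{*}-z\bbI_{p-M})^{-1}\bbZ_{2}=n[\bbI_{n}+z(\underline{\bbB}_{22}-z\bbI_{n})^{-1}]$ with $\bbZ_{2}=\bbD_{2}^{1/2}\bbU_{2}^{*}\bbX$ and $\underline{\bbB}_{22}:=\tfrac1n\bbX^{*}\bbU_{2}\bbD_{2}\bbU_{2}^{*}\bbX$ (whose LSD is $\underline{F}^{c,H}$), one obtains the clean form ${\bf W}(z)=-z\bbD_{1}^{1/2}{\bf M}_{n}(z)\bbD_{1}^{1/2}-z\bbI_{M}$, where ${\bf M}_{n}(z)=\tfrac1n\bbY_{1}(\underline{\bbB}_{22}-z\bbI_{n})^{-1}\bbY_{1}^{*}$ and $\bbY_{1}=\bbU_{1}^{*}\bbX$. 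Consequently $\det{\bf W}(z)=(-z)^{M}\det\bbD_{1}\cdot\det({\bf M}_{n}(z)+\bbD_{1}^{-1})$, hence
$$\frac{d}{dz}\log\det{\bf W}(z)=\frac{M}{z}+\mathrm{tr}\bigl[({\bf M}_{n}(z)+\bbD_{1}^{-1})^{-1}{\bf M}_{n}'(z)\bigr].$$

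Then I would invoke concentration. Since $\bbD_{2}$ is bounded, $\underline{\bbB}_{22}$ has bounded spectral norm and $(\underline{\bbB}_{22}-z\bbI_{n})^{-1}$ is uniformly controlled for $z\in\mathcal{C}$; the rows of $\bbY_{1}$ are, after the truncation step above, uncorrelated, of unit variance, and uncorrelated with $\underline{\bbB}_{22}$, so standard quadratic-form concentration in RMT, combined with a leave-one-out/martingale argument and Vitali's theorem, gives ${\bf M}_{n}(z)=\underline{m}_{n}(z)\bbI_{M}+o_{p}(1)$ uniformly on $\mathcal{C}$, where $\underline{m}_{n}(z)=\tfrac1n\mathrm{tr}(\underline{\bbB}_{22}-z\bbI_{n})^{-1}\to\underline{m}(z)$ (equivalently $\underline{m}_{2n0}(z)\to\underline{m}(z)$). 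Because $\alpha_{k}^{-1}\to0$ and $\mathcal{C}$ is chosen so that $\underline{m}$ stays bounded away from $0$ on $\mathcal{C}$, one gets $({\bf M}_{n}(z)+\bbD_{1}^{-1})^{-1}\to\underline{m}(z)^{-1}\bbI_{M}$ and ${\bf M}_{n}'(z)\to\underline{m}'(z)\bbI_{M}$ uniformly, so $\mathrm{tr}[({\bf M}_{n}+\bbD_{1}^{-1})^{-1}{\bf M}_{n}']\to M\,\underline{m}'(z)/\underline{m}(z)$ uniformly on $\mathcal{C}$. Substituting this into the contour integral and using $\oint_{\mathcal{C}}f(z)\tfrac{M}{z}\,dz=0$ yields $\sum_{j=M+1}^{p}f(\lambda_{j})-\sum_{j=1}^{p-M}f(\widetilde{\lambda}_{j})=\tfrac{M}{2\pi i}\oint_{\mathcal{C}}f(z)\tfrac{\underline{m}'(z)}{\underline{m}(z)}\,dz+o_{p}(1)$, which is the assertion.

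\emph{Main obstacle.} The hard part is the concentration step ${\bf M}_{n}(z)\to\underline{m}(z)\bbI_{M}$ uniformly on $\mathcal{C}$. The divergence of the spikes in $\bbD_{1}$ is actually harmless here because $\bbD_{1}$ has already been factored out, so only a bare $o_{p}(1)$ with no rate is needed; the genuine difficulties are that in the non-Gaussian case $\bbY_{1}=\bbU_{1}^{*}\bbX$ and $\underline{\bbB}_{22}$ are merely uncorrelated (not independent), forcing a careful leave-one-out treatment and control of the deterministic equivalent $\underline{m}_{2n0}$, and that one must ensure $\underline{m}$ does not vanish on $\mathcal{C}$ — which is precisely the statement that $\det{\bf W}$ has no spurious zero inside $\mathcal{C}$, i.e.\ that the sample outlier locations $\phi_{n}(\alpha_{k})$, at which $\underline{m}_{n}(\phi_{n}(\alpha_{k}))\approx-\alpha_{k}^{-1}$, escape to infinity rather than re-entering the bulk region.
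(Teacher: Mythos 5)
Your proposal is correct and follows essentially the same route as the paper: Cauchy's formula plus the Schur complement of $\bbS_{22}$ in $\bbS$, factoring out $\bbD_{1}^{1/2}$, and the key concentration $\frac{1}{n}\bbU_{1}^{*}\bbX(\underline{\bbB}_{22}-z\bbI_{n})^{-1}\bbX^{*}\bbU_{1}=\underline{m}_{2n}(z)\bbI_{M}+o_{p}(1)$ (which the paper imports from Theorem 3.1 of Jiang and Bai (2021)), with $\bbD_{1}^{-1}\to 0$ killing the remaining term. Your packaging of the two trace terms as $\frac{d}{dz}\log\det\mathbf{W}(z)$ is just a tidier form of the paper's separate computation of $T_{1}$ and $T_{2}$, and you are in fact more careful than the paper about the spiked eigenvalues leaving the contour and about the $\oint f(z)z^{-1}dz$ term.
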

\begin{proof}
Note that
	\begin{align*}
		L_{1}:=\sum_{j=M+1}^{p}f\left( \lambda_{j}\right).
	\end{align*}
	By the Cauchy integral formula, we have
	\begin{align*}
		L_{1}=-\frac{p}{2\pi i}\oint_{\mathcal C}f\left(z \right)m_{n}\left(z \right)dz,
	\end{align*}
	where $m_{n}=\frac{1}{p}\mathrm{tr}\left(\bbS-z\bbI_{p} \right)^{-1}=\frac{1}{p}\mathrm{tr}\left(\bbB-z\bbI_{p} \right)^{-1} $. Analogously, we have
	\begin{align*}
		L_{2}:=\sum_{j=1}^{p-M}f\left( \widetilde{\bgl}_{j}\right)=-\frac{p-M}{2\pi i}\oint_{\mathcal C}f\left(z \right)m_{2n}\left(z \right)dz,	
	\end{align*}
	where $m_{2n}=\frac{1}{p-M}\mathrm{tr}\left(\bbS_{22}-z\bbI_{p-M} \right)^{-1}$. By applying the block matrix inversion formula to $m_n$, we can obtain
	\begin{align}\label{L12}
		L_{1}-L_{2}=-\frac{1}{2\pi i}\oint_{\mathcal C}f\left(z \right)\left(T_{1}-T_{2} \right) dz,
	\end{align}
	where 
	\begin{align*}
		T_{1} &=\mathrm{tr}\left( \bbS_{11}-z\bbI_{M}-\bbS_{12}\left( \bbS_{22}-z\bbI_{p-M}\right)^{-1}\bbS_{21} \right)^{-1},\\ 	
		T_{2} &=-\mathrm{tr}\left[\left( \bbS_{11}-z\bbI_{M}-\bbS_{12}\left(\bbS_{22}-z\bbI_{p-M} \right)^{-1}\bbS_{21} \right)^{-1}\bbS_{12}\left(\bbS_{22}-z\bbI_{p-M} \right)^{-2}\bbS_{21}\right].   
	\end{align*}
	Note that for any matrix $\bbZ$, 
	\begin{align*}
		\bbZ\left(\bbZ^{\ast}\bbZ-\lambda \bbI \right)^{-1}\bbZ^{\ast}=\bbI+\lambda\left(\bbZ\bbZ^{\ast}-\lambda \bbI \right)^{-1},
	\end{align*}	
	which, together with the notation $\bUps_n:=\frac{1}{n}\bbD_{1}^{\frac{1}{2}}\bbU_{1}^{\ast}\bbX\left(\frac{1}{n}\bbX^{\ast}\bbU_{2}\bbD_{2}\bbU_{2}^{*}\bbX-z\bbI_{n}\right) ^{-1}  \bbX^{\ast}\bbU_{1}\bbD_{1}^{\frac{1}{2}} $, implies that
	\begin{align*}
		 T_{1}&=-z^{-1}\mathrm{tr}\left(\bbI_{M}+\bUps_n \right)^{-1}\\
		T_{2} &=z^{-1}\mathrm{tr}\left[\left(\bbI_{M}+\bUps_n \right)^{-1}\bbS_{12}\left(\bbS_{22}-z\bbI_{p-M} \right)^{-2}\bbS_{21}\right]
	\end{align*}
	$ \underline{m}_{2n}= \underline{m}_{2n}(z)$ denotes the Stieltjes transform of $ F^{\frac{1}{n}\bbX^{\ast}\bbU_{2}\bbD_{2}\bbU_{2}^{*}\bbX} $. Thus,  we have that $\underline{m}_{2n}(z)-\underline{m}(z)=o_p(1)$ for any $z\in\mathcal{C}$. 
	From Theorem 3.1 of \citep{JiangB21G}, we know that
	\begin{align} 
		\frac{1}{n}\bbU_{1}^{\ast}\bbX\left(\frac{1}{n}\bbX^{\ast}\bbU_{2}\bbD_{2}\bbU_{2}^{*}\bbX-z\bbI_{n}\right) ^{-1}\bbX^{\ast}\bbU_{1}
		=\underline{m}_{2n}\left(z \right)\bbI_{M}+O_{p}(n^{-\frac{1}{2}}).
		\label{6}
	\end{align}
Thus, under Assumption \ref{ass3}, we obtain that
	\begin{align} 
	\bbD_{1}^{1/2}\left(\bbI_{M}+\bUps_n \right)^{-1}\bbD_{1}^{1/2}=\frac{1}{\underline{m} \left(z \right)}\bbI_{M}+o_p(1),
	\label{7}
	\end{align}
 which yields
 \begin{align}\label{T1op1}
  T_{1}=o_p(1).
\end{align}

	It follows that
	\begin{align} 
	&\bbD_{1}^{-1/2}\bbS_{12}\left(\bbS_{22}-z\bbI_{p-M} \right)^{-2}\bbS_{21}\bbD_{1}^{-1/2}\nonumber\\
		\nonumber=&\frac{1}{n}\mathrm{tr}\left[ \left(\bbS_{22}-z\bbI_{p-M} \right)^{-2}\bbS_{22}\right]\bbI_{M}+O_{p}(n^{-\frac{1}{2}}) \\
		\nonumber=&\frac{1}{n}\mathrm{tr}\left(\bbS_{22}-z\bbI_{p-M} \right)^{-1}\bbI_{M}+\frac{z}{n}\mathrm{tr}\left(\bbS_{22}-z\bbI_{p-M} \right)^{-2}\bbI_{M}+O_{p}(n^{-\frac{1}{2}})\\
		=& cm\left(z \right)\bbI_{M}+zcm'\left(z \right)\bbI_{M}+o_p(1)\nonumber\\
		=&\underline{m}\left(z \right)\bbI_{M}+z\underline{m}'\left(z \right)\bbI_{M}+o_p(1), \label{8} 
	\end{align}
	where the last equality is derived from $ \underline{m}=-\frac{1-c}{z}+cm\left(z \right)  $. Therefore, according to (\ref{7}) and (\ref{8}), we obtain
	\begin{align*}
		T_{2}=M\frac{\underline{m}\left(z \right)+z\underline{m}'\left(z \right)}{z\underline{m}\left(z \right)}+o_p(1),
	\end{align*}
	which, together with \eqref{L12} and \eqref{T1op1}, implies that
	\begin{align*}
		L_{1}-L_{2}=\frac{M}{2\pi i}\oint_{\mathcal C}f\left(z \right)\frac{\underline{m}(z)+z\underline{m}^{'}(z)}{z\underline{m}(z)}dz+o_p(1)=\frac{M}{2\pi i}\oint_{\mathcal C}f\left(z \right)\frac{\underline{m}^{'}(z)}{\underline{m}(z)}dz+o_p(1).
	\end{align*}
	Therefore, the proof of this lemma is complete.
\end{proof}
Define random vector $\boldsymbol\gamma_{k}=\left( \gamma_{kj} \right)' = \left( \sqrt{n} \frac{\lambda_{j}-\phi_{n}\left(\al_{k} \right) }{\phi_{n}\left(\al_{k} \right)},    j\in J_{k} \right)'$, where $ J_{k} $ is the indicator set of a packet of $ d_{k} $ consecutive sample eigenvalues. Then, we present the following lemma, which is borrowed from \cite{JiangB21G} and characterizes the limiting distribution of the spiked eigenvalues of the sample covariance matrix. 
\begin{lemma} (\cite{JiangB21G})\label{lemma4}
	 Under Assumptions \ref{ass1}-\ref{ass4}, \textit{ random vector} $ \boldsymbol\gamma_{k} $ \textit{converges weakly to the joint distribution of $ d_{k} $ eigenvalues of a Gaussian random matrix} $$ -\frac{1}{\theta_{k}}\left[\bgO_{\phi_{k}} \right]_{kk},   $$
	\textit{where} 
	$$\theta_{k}=\phi_{k}^{2}\underline{m}_{2}\left( \phi_{k}\right), ~~\underline{m}_{2}\left( \lambda\right)=\int\frac{1}{\left( \lambda-x\right) ^{2}}d\underline{F}^{c,H}\left( x\right)   $$ \textit{with} $ \underline{F}^{c,H} $ \textit{being the} LSD \textit{of matrix} $ n^{-1}\bbX^{\ast}\bbU_{2}\bbD_{2}\bbU_{2}^{*}\bbX,$ $ \phi_{k}=\al_{k}\left(1+c\int\frac{t}{\al_{k}-t}dH\left(t \right)  \right)  $.
	$ \left[\bgO_{\phi_{k}} \right]_{kk} $ \textit{is the} $ k $\textit{th diagonal block of matrix} $ \bgO_{\phi_{k}} $. \textit{The variances and covariances of the elements} $ \omega_{ij} $ \textit{of} $ \bgO_{\phi_{k}} $ \textit{are:}
	$$
	\operatorname{Cov}\left(\omega_{i_{1}, j_{1}}, \omega_{i_{2}, j_{2}}\right)=\left\{\begin{array}{cc}
		(\al_{x}+1) \theta_{k}+\beta_{x}\mathcal{U}_{ i i i i} \nu_{k}, & i_{1}=j_{1}=i_{2}=j_{2}=i \\
		\theta_{k}+\beta_{x}\mathcal{U}_{ i j i j} \nu_{k}, & i_{1}=i_{2}=i \neq j_{1}=j_{2}=j \\
		\beta_{x}\mathcal{U}_{ i_{1} j_{1} i_{2} j_{2}} \nu_{k}, & \text { other cases }
	\end{array}\right.
	$$
	\textit{where} $\beta_{x}\mathcal{U}_{ i_{1} j_{1} i_{2} j_{2}}=\sum_{t=1}^{p} \bar{u}_{t i_{1}} u_{t j_{1}} u_{t i_{2}} \bar{u}_{t j_{2}} \beta_{x}$, $\boldsymbol u_{i}=\left(u_{1 i}, \ldots, u_{p i}\right)^{\prime}$ \textit{are the} $i$ \textit{th column of the matrix} $\mathbf{U}_{1}$, $\nu_{k}=\phi_{k}^{2} \underline{m}^{2}\left(\phi_{k}\right)$.
\end{lemma}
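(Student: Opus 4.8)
The plan is to obtain Lemma~\ref{lemma4} as a specialization of the generalized spiked-eigenvalue CLT of \cite{JiangB21G}, so most of the effort goes into matching the two settings and checking the hypotheses. In their framework the non-spiked part of the population is governed by a limiting distribution $H$, which here is exactly the $H$ of Assumption~\ref{ass3} (the LSD of $\bGma\bGma^{*}$), while the spiked eigenvalues are $\al_{1}>\cdots>\al_{K}$ with multiplicities $d_{1},\dots,d_{K}$, now allowed to diverge with $n$. The first point to record is that the centering $\phi_{n}(\al_{k})$ in the definition of $\boldsymbol\gamma_{k}$ is precisely the finite-$n$ outlier location: the computation carried out in the proof of Lemma~\ref{lemma1} shows that $\underline m_{2n0}(\phi_{n}(\al_{k}))=-\al_{k}^{-1}$ identically in $n$, so centering at $\phi_{n}(\al_{k})$ rather than at $\phi_{k}$ removes any $\sqrt n$-order bias coming from replacing $(c,H)$ by $(c_{n},H_{n})$.

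Next I would write down the determinant equation for the outlier eigenvalues, exactly as in the proof of Lemma~\ref{lemma2}. Using the block form of $\bbB$ from Section~\ref{section 6}, the identity $\bbZ(\bbZ^{*}\bbZ-\lambda\bbI)^{-1}\bbZ^{*}=\bbI+\lambda(\bbZ\bbZ^{*}-\lambda\bbI)^{-1}$, and a Schur complement, a point $\lambda$ outside the support of $\underline F^{c,H}$ is an eigenvalue of $\bbB$ if and only if $\det(\bbD_{1}^{-1}+\bbQ_{n}(\lambda))=0$, where $\bbQ_{n}(\lambda)=\frac{1}{n}\bbU_{1}^{*}\bbX(\frac{1}{n}\bbX^{*}\bbU_{2}\bbD_{2}\bbU_{2}^{*}\bbX-\lambda\bbI_{n})^{-1}\bbX^{*}\bbU_{1}$ --- a matrix that, crucially, does not involve $\bbD_{1}$. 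By \eqref{6}, $\bbQ_{n}(\lambda)=\underline m_{2n0}(\lambda)\bbI_{M}+O_{p}(n^{-1/2})$, and the generalized four-moment theorem of \cite{JiangB21G} (which dispenses with Gaussian-like fourth moments) identifies the $\sqrt n$-scaled remainder as a Gaussian matrix; its mean and covariances are computed by a direct evaluation of the first two moments of normalized quadratic forms $\frac{1}{n}\boldsymbol u_{i}^{*}\bbX(\cdots)^{-1}\bbX^{*}\boldsymbol u_{j}$ in the columns of $\bbU_{1}$, which produces the parameters $\al_{x}$, $\beta_{x}$, $\mathcal U_{i_{1}j_{1}i_{2}j_{2}}=\sum_{t}\bar u_{ti_{1}}u_{tj_{1}}u_{ti_{2}}\bar u_{tj_{2}}$, together with $\theta_{k}=\phi_{k}^{2}\underline m_{2}(\phi_{k})$ and $\nu_{k}=\phi_{k}^{2}\underline m^{2}(\phi_{k})$, in the form recorded in the statement. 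Finally, Taylor expanding $\underline m_{2n0}$ around $\phi_{n}(\al_{k})$ (using $\underline m_{2n0}(\phi_{n}(\al_{k}))=-\al_{k}^{-1}$ and $\underline m_{2n0}'\to\underline m_{2}$), restricting the equation $\det(\bbD_{1}^{-1}+\bbQ_{n}(\lambda))=0$ to the $k$-th diagonal block, localizing near $\lambda=\phi_{n}(\al_{k})$, and rescaling by the appropriate power of $\sqrt n$, I expect to be reduced to
\[
\det\!\left(\theta_{k}\,\gamma_{kj}\,\bbI_{d_{k}}+\bigl[\bgO_{\phi_{k}}\bigr]_{kk}+o_{p}(1)\right)=0 ,
\]
so that $\boldsymbol\gamma_{k}$, i.e.\ the vector of the $d_{k}$ such roots, converges to the eigenvalues of $-\theta_{k}^{-1}[\bgO_{\phi_{k}}]_{kk}$.

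I expect the main obstacle to be the divergence $\al_{K}\to\infty$, i.e.\ making sure it does not break the localization. Concretely, one needs that $\phi_{k}=\phi(\al_{k})$ stays bounded away from the support of $\underline F^{c,H}$ (automatic here, since $\phi_{k}\to\infty$), that the $O_{p}(n^{-1/2})$ error in \eqref{6} and the Taylor remainder for $\underline m_{2n0}$ are uniform over a shrinking $\lambda$-neighbourhood of $\phi_{n}(\al_{k})$, and that $\theta_{k}$ and $\nu_{k}$ have finite nonzero limits --- they both tend to $1$, since $\underline m(\phi_{k})\sim-\phi_{k}^{-1}$ and $\underline m_{2}(\phi_{k})\sim\phi_{k}^{-2}$. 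The feature that makes all of this manageable, and that I would emphasize, is that $\bbQ_{n}$ --- hence the entire fluctuation matrix --- is free of $\bbD_{1}$, so the diverging spikes enter only through the deterministic quantities $\phi_{k},\theta_{k},\nu_{k}$; no genuinely new concentration estimate for unbounded dispersions is needed, and the argument of \cite{JiangB21G} carries over essentially unchanged.
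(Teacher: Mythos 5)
The paper offers no proof of this lemma---it is quoted directly from \cite{JiangB21G}---and your reconstruction follows precisely the route of that reference and of the fragment the paper itself reproduces in the proof of Lemma \ref{lemma5}: the outlier determinant equation $\det(\bbD_{1}^{-1}+\bbQ_{n}(\lambda))=0$ with the fluctuation matrix free of $\bbD_{1}$, its Gaussian limit via the generalized four moment theorem (yielding the stated covariances through $\al_{x}$, $\beta_{x}$, $\mathcal{U}_{i_{1}j_{1}i_{2}j_{2}}$), and localization at $\phi_{n}(\al_{k})$; your supporting checks ($\underline{m}_{2n0}(\phi_{n}(\al_{k}))=-\al_{k}^{-1}$ and $\theta_{k},\nu_{k}\to 1$) are also correct. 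The one slip is the phrase suggesting the multiplicities $d_{k}$ may diverge with $n$: Assumption \ref{ass3} keeps $M$ fixed, and it is only the eigenvalues $\al_{k}$ that are allowed to diverge.
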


Recall that $ \lambda_{j} $ is the eigenvalue of $ \bbB $, and $ \widetilde{\bgl}_{j} $ is the eigenvalue of $ \bbS_{22} $. The following lemma shows the independence between $ \sum_{j=1}^{M}f\left( \lambda_{j}\right) $ and $ \sum_{j=1}^{p-M}f\left(\widetilde{\bgl}_{j} \right) $.

\begin{lemma}\label{lemma5} Under Assumptions \ref{ass1}-\ref{ass4},  $ \sum_{j=1}^{M}f\left( \lambda_{j}\right) $ and $ \sum_{j=1}^{p-M}f\left(\widetilde{\bgl}_{j} \right) $  are asymptotically independent.
\end{lemma}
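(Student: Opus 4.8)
The plan is to prove asymptotic independence of the two linear spectral statistics by showing that the joint characteristic function factorizes asymptotically. Concretely, I would first recall from Lemma \ref{lemma4} that the fluctuations of the spiked eigenvalues $\lambda_1,\dots,\lambda_M$ are governed, after centering and scaling by $\phi_n(\al_k)$, by the diagonal blocks $[\bgO_{\phi_k}]_{kk}$ of a Gaussian matrix whose entries are quadratic forms built from the columns $\boldsymbol u_i$ of $\bbU_1$ and from $\bbX$. On the other hand, $\sum_{j=1}^{p-M}f(\widetilde\lambda_j)$ is a linear spectral statistic of $\bbS_{22}=\bbD_2^{1/2}\bbU_2^\ast\bbS_x\bbU_2\bbD_2^{1/2}$, whose fluctuations are (by the Bai--Silverstein-type machinery, cf. \cite{10.1214/aop/1078415845} and \cite{10.1214/14-AOS1292}) a linear functional of the martingale-difference decomposition of $\tr(\bbS_{22}-z\bbI)^{-1}$ along the rows of $\bbU_2^\ast\bbX$. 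The key structural observation is that, because $\bbU=(\bbU_1,\bbU_2)$ is unitary, the ``directions'' $\bbU_1^\ast\bbX$ and $\bbU_2^\ast\bbX$ are built from the same columns $\boldsymbol x_1,\dots,\boldsymbol x_n$ of $\bbX$ but probed along orthogonal subspaces; asymptotically these two families of quadratic/bilinear forms become jointly Gaussian with vanishing cross-covariance.

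The main steps I would carry out are as follows. First, reduce everything to the CLT regime: by Lemma \ref{lemma2} the bounded part $\sum_{j=M+1}^p f(\lambda_j)$ equals $\sum_{j=1}^{p-M}f(\widetilde\lambda_j)$ plus a deterministic integral plus $o_p(1)$, so it suffices to prove that $\sum_{j=1}^M f(\lambda_j)$ and $\sum_{j=1}^{p-M}f(\widetilde\lambda_j)$ are jointly asymptotically Gaussian with zero cross-covariance. Second, I would establish joint asymptotic normality of the pair $\big(\boldsymbol\gamma_1,\dots,\boldsymbol\gamma_K,\ \tr(\bbS_{22}-z_1\bbI)^{-1}-\E\tr(\cdots),\dots\big)$ by writing both as images of the same underlying array and running a single multivariate martingale CLT (the Lindeberg-type condition from Assumption \ref{ass1} and the truncation from Section 6.1 give the needed uniform bounds). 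Third, and this is the crux, I would compute the asymptotic cross-covariance between a spiked fluctuation $\gamma_{kj}$ (a quadratic form of the type $\frac1n\boldsymbol x_a^\ast\bbU_1^\ast(\cdots)\bbU_1\boldsymbol x_b$ with resolvents of $\bbS_{22}$-type matrices sandwiched in) and a martingale-difference term $\E_{\ell}-\E_{\ell-1}$ of $\tr(\bbS_{22}-z\bbI)^{-1}$. Using the independence of the columns of $\bbX$ and standard quadratic-form expansion lemmas, each such cross term involves traces of products of the form $\bbU_1^\ast(\text{function of }\bbU_2\bbD_2\bbU_2^\ast)\bbU_1$; by $\bbU_1^\ast\bbU_2=0$ these traces either vanish or are $O(n^{-1})$ times a bounded quantity, so the cross-covariance tends to $0$. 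Finally, combining Steps 2--3 with the Cramér--Wold device and the Cauchy-integral representations of both LSSs (as in Lemmas \ref{lemma1} and \ref{lemma2}) yields that the joint limit is a Gaussian vector with block-diagonal covariance, i.e.\ the two statistics are asymptotically independent.

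I expect the main obstacle to be Step 3: rigorously showing that the cross-covariance vanishes. The difficulty is that $\bbS_{22}$ and $\bbS_{11},\bbS_{12}$ are not independent — they share the random matrix $\bbX$ — so one cannot simply condition one family away. One must instead track, through the resolvent and quadratic-form expansions, exactly which combinations of the entries of $\bbU$ appear, and exploit the orthogonality relation $\bbU_1^\ast\bbU_2=\mathbf0$ together with the normalization $\|\bbU_1\|_{\mathrm{op}}\le 1$ to kill the leading-order contributions. A secondary technical nuisance is that, as noted after Theorem \ref{thm1}, quantities like $\Theta_{1,n},\Theta_{2,n}$ need not converge, so the argument has to be phrased in terms of characteristic functions evaluated along subsequences (or using tightness plus identification of all subsequential cross-covariances as $0$) rather than assuming a limiting covariance structure exists. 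The block-decomposition analysis of $\bbB$ from \cite{JiangB21G}, already invoked in the proof of Lemma \ref{lemma2}, supplies the precise estimates (the $O_p(n^{-1/2})$ bounds in \eqref{6}--\eqref{8}) that make this bookkeeping tractable.
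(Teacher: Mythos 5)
Your strategy (joint asymptotic normality of the two families via a single martingale CLT, vanishing cross-covariance, then Cram\'er--Wold) is a genuinely different route from the one the paper takes. The paper argues by \emph{conditioning}: following \cite{JiangB21G}, it treats $\bbU_{1}^{*}\bbX$ as asymptotically independent of $\bbU_{2}^{*}\bbX$ given the latter, observes that the conditional limiting law of the spiked fluctuations $\gamma_{kj}$ depends on $\bbU_{2}^{*}\bbX$ only through $\tr\bigl(\phi_{k}\bbI-\tfrac1n\bbX^{*}\bbU_{2}\bbD_{2}\bbU_{2}^{*}\bbX\bigr)^{-1}$, which converges to the deterministic quantity $\underline{m}_{2}(\phi_{k})$, and concludes that the conditional limit does not depend on the conditioning --- hence asymptotic independence --- without ever computing a cross-covariance. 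Your route is the one taken in the later literature (cf.\ \cite{zhang2022asymptotic}) and is arguably more self-contained, but it is also substantially heavier, and that is where the problem lies.

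The genuine gap is in your Step 3. You claim the cross terms reduce to traces of products of the form $\bbU_{1}^{*}(\cdot)\bbU_{1}$ sandwiching functions of $\bbU_{2}$-projected quantities, which ``vanish or are $O(n^{-1})$'' by $\bbU_{1}^{*}\bbU_{2}=0$. Orthogonality does kill the \emph{trace} terms: for $\bbP=\bbU_{1}\bbA\bbU_{1}^{*}$ and $\bbQ=\bbU_{2}\bbB\bbU_{2}^{*}$ one has $\tr(\bbP\bbQ)=0$. But the covariance of two quadratic forms $\boldsymbol{x}_{j}^{*}\bbP\boldsymbol{x}_{j}$ and $\boldsymbol{x}_{j}^{*}\bbQ\boldsymbol{x}_{j}$ in the same column also carries a second-moment term proportional to $\tr(\bbP\bbQ')$ (which involves $\bbU_{1}^{*}\bar{\bbU}_{2}$, not annihilated by $\bbU_{1}^{*}\bbU_{2}=0$ for complex $\bbU$) and a fourth-cumulant term proportional to $\beta_{x}\sum_{i}[\bbP]_{ii}[\bbQ]_{ii}$, a sum of \emph{products of diagonal entries} that orthogonality of the column spaces does nothing to suppress. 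Since Lemma \ref{lemma5} is stated under Assumptions \ref{ass1}--\ref{ass4} only (no diagonality of $\bbT^{*}\bbT$, no $\beta_{x}=0$, no $\alpha_{x}=0$), these contributions are generically present and must be shown to be negligible after summation over $j$ and the $n^{-1/2}$ normalization of the spiked part --- this is precisely the delicate bookkeeping that occupies a full separate paper in \cite{zhang2022asymptotic}, and your proposal does not supply it. As written, the crux step would not go through; you would either need to add Assumptions \ref{ass5}--\ref{ass6} (weakening the lemma) or carry out the full fourth-cumulant analysis, or else switch to the paper's conditioning argument, which bypasses the cross-covariance computation entirely.
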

\begin{proof} 
	It is sufficient to prove that for a given $ \sum_{j=1}^{p-M}f\left( \widetilde{\lambda}_{j}\right) $, the asymptotic limiting distribution of $ \sum_{j=1}^{M}f\left( \lambda_{j}\right) $ does not depend on the random part of $ \sum_{j=1}^{p-M}f\left( \widetilde{\lambda}_{j}\right) $, that is, it only depends on its limit. 
	
	First, we consider $ f(x)=x $.
	From the proof of Theorem 3.1 in \cite{JiangB21G}, we have that
	$$ 0=\left| \left[\bgO_{M}\left( \phi_{k}\right) \right] _{kk}+\mathrm{ lim}\gamma_{kj}\left\lbrace \phi_{k}^{2}\underline m_{2}(\phi_{k})\right\rbrace \bbI_{d_{k}}\right|,$$ 
	where $ \bgO_{M}\left( \phi_{k}\right) $ 
	\begin{align*}
=\frac{\phi_{k}}{\sqrt{n}}\left[ \mathrm{tr}\left\lbrace \left(  \phi_{k}\bbI-\frac{1}{n}\bbX^{*}\bbU_{2}\bbD_{2}\bbU_{2}^{*} \bbX\right) ^{-1}\right\rbrace \bbI-\bbU_{1}^{*}\bbX\left(\phi_{k}\bbI-\frac{1}{n}\bbX^{*}\bbU_{2}\bbD_{2}\bbU_{2}^{*} \bbX\right) ^{-1}\bbX^{*}\bbU_{1}\right],
\end{align*}
and
 $\underline m_{2}(\phi_{k}) $ is the limit of $ \mathrm{tr} \left(  \phi_{k}\bbI-\frac{1}{n}\bbX^{*}\bbU_{2}\bbD_{2}\bbU_{2}^{*} \bbX\right) ^{-2}$. Then, we know that $ \gamma_{kj} $ has the same asymptotic distribution with eigenvalues of
	$ -\frac{\left[\bgO_{M}\left( \phi_{k}\right) \right] _{kk}}{\phi_{k}^{2}\underline m_{2}(\phi_{k})} $ in order. Given $ \bbU_{2}^{*}\bbX $, from \cite{JiangB21G}, we could suppose that $ \bbU_{2}^{*} \bbX $ and $  \bbU_{1}^{*} \bbX $ are independent. Then, the limiting distribution of $ \gamma_{kj}  $ only depends on the limit of $\mathrm{tr}\left(  \phi_{k}\bbI-\frac{1}{n}\bbX^{*}\bbU_{2}\bbD_{2}\bbU_{2}^{*} \bbX\right) ^{-1} $, that is, $ \underline{m}_{2}(\phi_{k}) $, and has nothing to do with the random part. Therefore, the independence between $ \sum_{j=1}^{M}f\left( \lambda_{j}\right) $  and $ \sum_{j=1}^{p-M}f\left( \widetilde{\lambda}_{j}\right) $ is obtained when $ f(x)=x. $ 
	
	When $ f(x)\neq x $, 
	by using the Newton-Leibniz formula, we have 
	\begin{align}
		&\nonumber\sum_{j=1}^{M}f\left(\lambda_{j}\right)-\sum_{k=1}^{K}d_{k}f\left(\phi_{n}\left(\bbalp_{k} \right) \right)
		=\nonumber\sum_{k=1}^{K}\sum_{ j\in J_{k}}(f\left(\lambda_{j}\right) - f\left(\phi_{n}\left(\bbalp_{k} \right) \right))  \\
		&=\nonumber\sum_{k=1}^{K}\sum_{j\in J_{k}}\int_{0}^{\frac{\phi_{n}\left(\bbalp_{k} \right)}{\sqrt{n}}\gamma_{kj}}f'\left(t+\phi_{n}\left(\bbalp_{k} \right) \right)dt\\
		&=\nonumber\sum_{k=1}^{K}\sum_{j\in J_{k}}\int_{0}^{1}\frac{\phi_{n}\left(\bbalp_{k} \right)}{\sqrt{n}}\gamma_{kj}\frac{f'\left(\phi_{n}\left(\bbalp_{k} \right)\left(1+\frac{\gamma_{kj}}{\sqrt{n}}s\right)  \right)}{f'\left(\phi_{n}\left(\bbalp_{k} \right)\right) }f'\left( \phi_{n}\left(\bbalp_{k} \right)\right)ds\\
		&\rightarrow\sum_{k=1}^{K}\sum_{j\in J_{k}}\int_{0}^{1}\gamma_{kj}\varpi^{k} ds
		=\sum_{k=1}^{K}\sum_{j\in J_{k}}\varpi^{k}\gamma_{kj},    \label{21}
	\end{align}
	where \eqref{21} is true due to Assumption \ref{ass4}, and $ \varpi^{k}=\lim\dfrac{\phi_{n}(\al_{k})}{\sqrt{n}}f^{\prime}(\phi_{n}(\al_{k}) )$. Thus, we turn it into a function of $ \gamma_{kj} $. Since we have proven above that given $ \sum_{j=1}^{p-M}f\left(\widetilde{\lambda}_{j}\right) $, the limiting distribution of $ \gamma_{kj} $ is only concerned with the limit of $ \sum_{j=1}^{p-M}f\left( \widetilde{\lambda}_{j}\right) $, as is $ \sum_{k=1}^{K}\sum_{j\in J_{k}}\varpi^{k}\gamma_{kj} $, accordingly, we can conclude that $ \sum_{j=1}^{M}f\left( \lambda_{j}\right) $  and $ \sum_{j=1}^{p-M}f\left( \widetilde{\lambda}_{j}\right) $ are asymptotically independent. The proof is complete.
\end{proof}

The following lemma derives the asymptotic distribution of the LSS generated from submatrix $ \bbS_{22} $.}

\begin{lemma}\label{lemma3}
	Define $ Q_{1}=\sum_{j=1}^{p-M}f_{1}(\widetilde{\lambda}_{j}) -(p-M)\int f_{1}(x)dF^{c_{nM},H_{2n}}$; then, under Assumptions \ref{ass1}-\ref{ass4}, we have $$ \kappa_{1}^{-1}\left(Q_{1}-\mu_{1} \right)\stackrel{d}\longrightarrow N\left(0,1 \right)  $$
	with mean function
	\begin{align*}
	\mu_{1}&=-\frac{\alpha_{x}}{2 \pi i}\cdot\oint_{\mathcal{C}}f_{1}(z)\frac{  c_{nM} \int \underline{m}_{2n0}^{3}(z)t^{2}\left(1+t \underline{m}_{2n0}(z)\right)^{-3} d H_{2n}(t)}{\left(1-c_{nM} \int \frac{\underline{m}_{2n0}^{2}(z) t^{2}}{\left(1+t \underline{m}_{2n0}(z)\right)^{2}} d H_{2n}(t)\right)\left(1-\alpha_{x} c_{nM} \int \frac{\underline{m}_{2n0}^{2}(z) t^{2}}{\left(1+t \underline{m}_{2n0}(z)\right)^{2}} d H_{2n}(t)\right) }dz \\
		&-\frac{\beta_{x}}{2 \pi i} \cdot \oint_{\mathcal{C}} f_{1}(z) \frac{c_{nM} \int \underline{m}_{2n0}^{3}(z) t^{2}\left(1+t \underline{m}_{2n0}(z)\right)^{-3} d H_{2n}(t)}{1-c_{nM} \int \underline{m}_{2n0}^{2}(z) t^{2}\left(1+t \underline{m}_{2n0}(z)\right)^{-2} d H_{2n}(t)} dz,  	
	\end{align*}
	and the covariance function is   
	\begin{align*}
		\kappa_{1}^{2}=-\frac{1}{4\pi^{2}}\oint_{\mathcal{C}_{1}}\oint_{\mathcal{C}_{2}}f_{1}\left(z_{1} \right)f_{1}\left(z_{2} \right)\vartheta_{n}^{2}dz_{1}dz_{2},
	\end{align*}
where 
$\vartheta_{n}^{2}=\Theta_{0,n}(z_{1},z_{2})+\al_{x}\Theta_{1,n}(z_{1},z_{2})+\beta_{x}\Theta_{2,n}(z_{1},z_{2}),$ 
\begin{align*}
	\Theta_{0,n}(z_{1},z_{2})&=\dfrac{\underline{m}_{2n0}^{\prime}(z_{1}) \underline{m}_{2n0}^{\prime}(z_{2})  }{(\underline{m}_{2n0}(z_{1})-\underline{m}_{2n0}(z_{2}))^{2}  }-\dfrac{1}{(z_{1}-z_{2})^{2}},\\
	\Theta_{1,n}(z_{1},z_{2})&=\frac{\partial}{\partial z_{2}}\left\lbrace \dfrac{\partial \mathcal{A}_{n}(z_{1},z_{2})}{\partial z_{1}}\dfrac{1}{1-\al_{x}\mathcal{A}_{n}(z_{1},z_{2})} \right\rbrace ,\\
	\mathcal{A}_{n}(z_{1},z_{2})&=\dfrac{z_{1}z_{2}}{n}\underline{m}_{2n0}^{\prime}(z_{1}) \underline{m}_{2n0}^{\prime}(z_{2})\mathrm{tr}{\bGma^{*}\bbP_{n}(z_{1})\bGma\bGma^{\prime}\bbP_{n}(z_{2})^{\prime} \bar{\bGma}},\\
	\Theta_{2,n}(z_{1},z_{2})&=\dfrac{z_{1}^{2}z_{2}^{2}\underline{m}_{2n0}^{\prime}(z_{1}) \underline{m}_{2n0}^{\prime}(z_{2})}{n}\sum_{i=1}^{p}\left[ \bGma^{*}\bbP_{n}^{2}(z_{1})\bGma\right] _{ii}\left[ \bGma^{*}\bbP_{n}^{2}(z_{2})\bGma\right] _{ii},
\end{align*}
and the definitions of $ \bbP_{n} $, $ \bGma $, and $ \underline{m}_{2n0} $ are defined in Section \ref{section 3}.
\end{lemma}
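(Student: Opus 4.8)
\textbf{Proof proposal for Lemma \ref{lemma3}.}

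The plan is to reduce the statement to the already-known Bai–Silverstein–type CLT (specifically the version of \cite{10.1214/14-AOS1292} that applies under Assumptions \ref{ass1}--\ref{ass4} to a sample covariance matrix whose population is \emph{bounded}) applied to the matrix $\bbS_{22}=\bbD_2^{1/2}\bbU_2^{\ast}\bbS_x\bbU_2\bbD_2^{1/2}$. The crucial observation is that $\bbS_{22}$ is \emph{not} itself a clean sample covariance matrix of the form $\tfrac1n\bbT_2\bbY\bbY^{\ast}\bbT_2^{\ast}$ with $\bbY$ having i.i.d.\ entries, because the columns of $\bbU_2^{\ast}\bbX$ are $(p-M)$-dimensional but $\bbX$ has $p$ rows; equivalently, $\bbU_2$ is a $p\times(p-M)$ sub-block of a unitary matrix, so $\bbU_2\bbU_2^{\ast}\ne\bbI_p$. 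Thus the first step is to write $\bbU_2^{\ast}\bbX$ as (in distribution, after conditioning or via rotational invariance of i.i.d.\ Gaussian-like moments up to controlled error) a $(p-M)\times n$ matrix $\widetilde\bbX$ with i.i.d.\ standardized entries, so that $\bbS_{22}$ has the same non-zero spectrum as $\tfrac1n\bbD_2^{1/2}\widetilde\bbX\widetilde\bbX^{\ast}\bbD_2^{1/2}$, a genuine sample covariance matrix with population spectral distribution $H_{2n}=F^{\bbD_2}$ (bounded by Assumption \ref{ass3}) and aspect ratio $c_{nM}=(p-M)/n$. This reduction must be done carefully because the entries of $\bbU_2^{\ast}\bbX$ are not exactly i.i.d.; one controls the discrepancy by the same truncation/renormalization already performed in Section 6.1 together with the fact that the fourth-cumulant and second-moment corrections are exactly the terms $\beta_x\mathcal U_{i_1j_1i_2j_2}$, $\alpha_x$ appearing in the statement — this is where $\bGma=\bbV_2\bbD_2^{1/2}\bbU_2^{\ast}$ and the projector structure enter, and it is what forces the $\Theta_{1,n},\Theta_{2,n}$ (and the $\alpha_x,\beta_x$ pieces of $\mu_1$) to carry the non-universal, possibly non-convergent contributions.

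Concretely, the steps I would carry out are: (i) express $Q_1=-\tfrac{1}{2\pi i}\oint_{\mathcal C}f_1(z)\,p_{nM}\big(m_{2n}(z)-m_{2n0}(z)\big)\,dz$ via the Cauchy integral formula, where $p_{nM}=p-M$, reducing the CLT for $Q_1$ to a CLT for the linear functional $z\mapsto p_{nM}(m_{2n}(z)-m_{2n0}(z))$ of the resolvent of $\bbS_{22}$ along the contour $\mathcal C$; (ii) decompose $m_{2n}(z)-\E m_{2n}(z)$ into a martingale-difference sum using $\E_j$ (the filtration generated by $\boldsymbol r_1,\dots,\boldsymbol r_j$, already introduced in Section \ref{section 2}), and compute the limiting bracket process — this yields the covariance kernel $\vartheta_n^2$, with $\Theta_{0,n}$ the "universal" Marchenko–Pastur part (written in Stieltjes-transform form using $\underline m_{2n0}$), $\Theta_{1,n}$ the correction from $\alpha_x\ne0$ (the $\bbT$-real / $\E x_{ij}^2\ne0$ effect, expressed through $\mathcal A_n$ and the quadratic form $\tr\bGma^{\ast}\bbP_n(z_1)\bGma\bGma'\bbP_n(z_2)'\bar\bGma$), and $\Theta_{2,n}$ the fourth-cumulant correction $\propto\beta_x\sum_i[\bGma^{\ast}\bbP_n^2(z_1)\bGma]_{ii}[\bGma^{\ast}\bbP_n^2(z_2)\bGma]_{ii}$; (iii) compute the limiting \emph{mean} $\E m_{2n}(z)-m_{2n0}(z)$ to order $1/n$, identifying the two integral terms of $\mu_1$ as the $\alpha_x$- and $\beta_x$-contributions respectively — these come from the standard $b_n(z)$-expansion (the quantity $b_n(z)$ is defined in Section \ref{section 3}) of $\E\tr(\bbB-z\bbI)^{-1}$ restricted to the $\bbS_{22}$-block; (iv) verify tightness of the process $z\mapsto p_{nM}(m_{2n}(z)-\E m_{2n}(z))$ on $\mathcal C$ so the finite-dimensional CLT upgrades to process convergence and the contour integral passes to the limit; (v) since $\vartheta_n^2$ genuinely depends on $n$ (the $\Theta_{1,n},\Theta_{2,n}$ need not converge, as the statement warns), phrase the conclusion as a \emph{self-normalized} CLT $\kappa_1^{-1}(Q_1-\mu_1)\stackrel{d}{\to}N(0,1)$, i.e.\ apply a triangular-array central limit theorem (Lindeberg/Lyapunov for martingale differences) where the normalization $\kappa_1$ is allowed to vary with $n$, and check the Lindeberg condition using the truncation bound $|x_{ij}|<\eta_n\sqrt n$ from Section 6.1.

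The main obstacle I expect is step (i)–(ii) combined: handling the fact that $\bbS_{22}$ arises from the \emph{non-orthogonal} compression $\bbU_2^{\ast}$ of an i.i.d.\ matrix, so that the standard decomposition $\bbB-z\bbI-\boldsymbol r_j\boldsymbol r_j^{\ast}=\bbA_j$ with $\boldsymbol r_j=\tfrac1{\sqrt n}\bGma\boldsymbol x_j$ couples the $\bbS_{22}$-block to the off-diagonal blocks $\bbS_{12},\bbS_{21}$ and to the spiked part. Controlling this coupling — showing that replacing the full resolvent by the $\bbS_{22}$-resolvent only changes things by $o_p(1)$ in the relevant linear functionals, uniformly on $\mathcal C$ — is exactly the "bias measured in probability" issue the introduction flags as the paper's technical novelty, and it is where Lemma \ref{lemma1} (the deterministic-equivalent identity $m_{1n0}=m_{2n0}$), Lemma \ref{lemma2} (the $o_p(1)$ correction between $\sum_{j>M}f(\lambda_j)$ and $\sum f(\widetilde\lambda_j)$), and the rigidity estimate \eqref{6} from \cite{JiangB21G} all get used. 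A secondary technical point is keeping all error terms genuinely $o_p(1)$ rather than $o(1)$ in expectation, since the divergence of $\bbD_1$ (the spiked part) precludes some of the usual a.s.\ bounds; this is why the lemma is stated with convergence in probability and why the proof must lean on concentration (e.g.\ Burkholder and the Lindeberg-type moment control from Assumption \ref{ass1}) rather than on fourth-moment a.s.\ arguments.
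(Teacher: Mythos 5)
Your plan is a from-scratch re-derivation of a Bai--Silverstein-type CLT for $\bbS_{22}$, whereas the paper's proof of this lemma is essentially a two-citation argument, and your route contains one step that would actually fail. The key observation the paper uses is that $\bbS_{22}=\bbD_2^{1/2}\bbU_2^{\ast}\bbS_x\bbU_2\bbD_2^{1/2}$ shares its non-zero spectrum with $\tfrac1n\bGma\bbX\bbX^{\ast}\bGma^{\ast}$, where $\bGma=\bbV_2\bbD_2^{1/2}\bbU_2^{\ast}$ is a fixed $p\times p$ deterministic matrix of \emph{bounded} spectral norm (rank $p-M$). The CLT of \cite{10.1214/14-AOS1292} applies verbatim to such a matrix with general deterministic shape $\bGma$, yielding $(\kappa_1^0)^{-1}(Q_1-\mu_1)\stackrel{d}{\to}N(0,1)$ with a variance $(\vartheta_n^0)^2$ written through $b_n(z)$ and the resolvents $\bbA_j^{-1}$; the paper then invokes \cite{najim2016gaussian} to replace $(\vartheta_n^0)^2$ by the deterministic equivalent $\vartheta_n^2=\Theta_{0,n}+\alpha_x\Theta_{1,n}+\beta_x\Theta_{2,n}$. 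Your first reduction --- rewriting $\bbU_2^{\ast}\bbX$ as an i.i.d.\ $(p-M)\times n$ matrix ``via rotational invariance'' --- is both unnecessary and invalid under Assumption \ref{ass1}: for non-Gaussian entries there is no rotational invariance, and $\bbU_2^{\ast}\bbX$ genuinely does not have i.i.d.\ entries. The whole point of keeping $\bGma$ (rather than compressing to $p-M$ rows) is that the existing CLT for arbitrary deterministic $\bbT$ absorbs this issue, with the non-universal corrections emerging automatically as the $\bGma$-dependent terms in $\mu_1$, $\Theta_{1,n}$, $\Theta_{2,n}$.

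Two further points. First, in your step (ii) you assert that computing the martingale bracket ``yields the covariance kernel $\vartheta_n^2$''; it does not --- the bracket is $(\vartheta_n^0)^2$, and passing from it to the closed-form $\Theta_{j,n}$ expressions (valid even when they do not converge) is a separate, nontrivial result of Najim and Yao that must be cited or reproved. Second, the ``main obstacle'' you identify --- the coupling of the $\bbS_{22}$-resolvent to $\bbS_{12},\bbS_{21}$ and the spiked block through $\boldsymbol r_j=\tfrac1{\sqrt n}\bGma\boldsymbol x_j$ --- does not arise in this lemma at all: $Q_1$ is a functional of the standalone matrix $\bbS_{22}$, which involves only the bounded part of the population. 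That decoupling difficulty is the content of Lemmas \ref{lemma2} and \ref{lemma5}, not of Lemma \ref{lemma3}.
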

\begin{proof}
	From \cite{10.1214/14-AOS1292}, we have that
under Assumptions \ref{ass1}- \ref{ass4}, the random variable  $ \left( \kappa_{1}^{0}\right) ^{-1}\left( Q_{1}-\mu_{1}\right) \stackrel{d}\longrightarrow N\left(0,1 \right), $ with mean function
\begin{align*}
	\mu_{1}&=-\frac{\alpha_{x}}{2 \pi i}\cdot\oint_{\mathcal{C}}\frac{ f_{1}(z) c_{nM} \int \underline{m}_{2n0}^{3}(z)t^{2}\left(1+t \underline{m}_{2n0}(z)\right)^{-3} d H_{2n}(t)}{\left(1-c_{nM} \int \frac{\underline{m}_{2n0}^{2}(z) t^{2}}{\left(1+t \underline{m}_{2n0}(z)\right)^{2}} d H_{2n}(t)\right)\left(1-\alpha_{x} c_{nM} \int \frac{\underline{m}_{2n0}^{2}(z) t^{2}}{\left(1+t \underline{m}_{2n0}(z)\right)^{2}} d H_{2n}(t)\right) }dz \\
	&-\frac{\beta_{x}}{2 \pi i} \cdot \oint_{\mathcal{C}} \frac{f_{1}(z) c_{nM} \int \underline{m}_{2n0}^{3}(z) t^{2}\left(1+t \underline{m}_{2n0}(z)\right)^{-3} d H_{2n}(t)}{1-c_{nM} \int \underline{m}_{2n0}^{2}(z) t^{2}\left(1+t \underline{m}_{2n0}(z)\right)^{-2} d H_{2n}(t)} dz,  	
\end{align*}
and the covariance function is  \begin{align*}
			\left( \kappa_{1}^{0}\right)^{2}=&-\frac{1}{4\pi^{2}}\oint_{\mathcal{C}_{1}}\oint_{\mathcal{C}_{2}}f_{1}\left(z_{1} \right)f_{1}\left(z_{2} \right)(\vartheta_{n}^{0})^{2}dz_{1}dz_{2}, 
		\end{align*} 
		where
		\begin{align*}
			(\vartheta_{n}^{0})^{2}=&\frac{b_{n}\left(z_{1}\right) b_{n}\left(z_{2}\right)}{n^{2}} \sum_{j=1}^{n} \operatorname{tr} \mathbb{E}_{j} \bGma\bGma^{*} \mathbf{A}_{j}^{-1}\left(z_{1}\right) \mathbb{E}_{j}\left(\bGma\bGma^{*} \mathbf{A}_{j}^{-1}\left(z_{2}\right)\right)\\&+\frac{\alpha_{x} b_{n}\left(z_{1}\right) b_{n}\left(z_{2}\right)}{n^{2}} \sum_{j=1}^{n} \operatorname{tr} \mathbb{E}_{j} \bGma^{*} \mathbf{A}_{j}^{-1}\left(z_{1}\right)\bGma
			\mathbb{E}_{j}\left(\bGma^{'}\left(\mathbf{A}_{j}^{\prime}\right)^{-1}\left(z_{2}\right)\bar{\bGma}\right)\\
			&+
			\frac{\beta_{x} b_{n}\left(z_{1}\right) b_{n}\left(z_{2}\right)}{n^{2}} \sum_{j=1}^{n} \sum_{i=1}^{p} \boldsymbol{e}_{i}^{\prime}  {\bGma}^{*} \mathbf{A}_{j}^{-1}\left(z_{1}\right) {\bGma} \boldsymbol{e}_{i} \cdot \boldsymbol{e}_{i}^{\prime} {\bGma}^{*} \mathbf{A}_{j}^{-1}\left(z_{2}\right) {\bGma} \boldsymbol{e}_{i},
		\end{align*}
		where $ b_{n}\left(z \right)=\frac{1}{1+n^{-1}\mathbb{E}\rtr\bGma\bGma^{*}\bbA_{j}^{-1}\left(z \right) }.$ The notation $ \bbA_{j}, \boldsymbol{e_{i}} $ is defined in Section \ref{section 2}. 
Moreover \cite{najim2016gaussian} provided an estimation $ \vartheta_{n}^{2} $ for $ (\vartheta_{n}^{0})^{2} $ and proved that $ (\vartheta_{n}^{0})^{2} $ is close to $ \vartheta_{n}^{2} $ in the Lévy--Prohorov distance, where 
$\vartheta_{n}^{2}=\Theta_{0,n}(z_{1},z_{2})+\al_{x}\Theta_{1,n}(z_{1},z_{2})+\beta_{x}\Theta_{2,n}(z_{1},z_{2}),$ 
\begin{align*}
	\Theta_{0,n}(z_{1},z_{2})&=\dfrac{\underline{m}_{2n0}^{\prime}(z_{1}) \underline{m}_{2n0}^{\prime}(z_{2})  }{(\underline{m}_{2n0}(z_{1})-\underline{m}_{2n0}(z_{2}) )^{2} }-\dfrac{1}{(z_{1}-z_{2})^{2}},\\
	\Theta_{1,n}(z_{1},z_{2})&=\frac{\partial}{\partial z_{2}}\left\lbrace \dfrac{\partial \mathcal{A}_{n}(z_{1},z_{2})}{\partial z_{1}}\dfrac{1}{1-\al_{x}\mathcal{A}_{n}(z_{1},z_{2})} \right\rbrace ,\\
	\mathcal{A}_{n}(z_{1},z_{2})&=\dfrac{z_{1}z_{2}}{n}\underline{m}_{2n0}^{\prime}(z_{1}) \underline{m}_{2n0}^{\prime}(z_{2})\mathrm{tr}{\bGma^{*}\bbP_{n}(z_{1})\bGma\bGma^{\prime}\bbP_{n}(z_{2})^{\prime} \bar{\bGma}},\\
	\Theta_{2,n}(z_{1},z_{2})&=\dfrac{z_{1}^{2}z_{2}^{2}\underline{m}_{2n0}^{\prime}(z_{1}) \underline{m}_{2n0}^{\prime}(z_{2})}{n}\sum_{i=1}^{p}\left[ \bGma^{*}\bbP_{n}^{2}(z_{1})\bGma\right] _{ii}\left[ \bGma^{*}\bbP_{n}^{2}(z_{2})\bGma\right] _{ii},
\end{align*}
The definitions of $ \bbP_{n} $, $ \bGma $, and $ \underline{m}_{2n0} $ are defined in Section \ref{section 3}. Notably, if $ \bGma $ is not real, 
the convergence of $ \Theta_{1,n}(z_{1},z_{2}) $ is not granted. However, if $ \bGma $ and entries $ x_{ij} $ are real, that is, $ \al_{x}=1 $, then it can be easily proven that $ \Theta_{0,n}(z_{1},z_{2})= \Theta_{1,n}(z_{1},z_{2}) $. Similarly, the convergence of $ \Theta_{2,n}(z_{1},z_{2}) $ depends on the assumption that $ \bGma^{*}\bGma $ is diagonal; thus, under Assumptions \ref{ass1}-\ref{ass4}, $ \Theta_{1,n}(z_{1},z_{2}) $ and $ \Theta_{2,n}(z_{1},z_{2}) $ may have no limits.

Thus, the covariance term $ \left( \kappa_{1}^{0}\right) ^{2} $ is estimable, and the estimation is $ \kappa_{1}^{2} $, with $$  \kappa_{1}^{2}=  -\frac{1}{4\pi^{2}}\oint_{\mathcal{C}_{1}}\oint_{\mathcal{C}_{2}}f_{1}\left(z_{1} \right)f_{1}\left(z_{2} \right)\vartheta_{n}^{2}dz_{1}dz_{2}. $$
 Therefore, the proof is finished.
\end{proof}

 \subsection{Proof of Theorem \ref{thm1}}



The proof of Theorem \ref{thm1} builds on the decomposition analysis of the LSSs and is divided into part (\uppercase\expandafter{\romannumeral1}) $ \sum_{j=1}^{M}f\left( \bgl_{j}\right) $ and part (\uppercase\expandafter{\romannumeral2}) $ \sum_{j=M+1}^{p}f\left( \bgl_{j}\right) $. Enlightened by the BST in \cite{10.1214/aop/1078415845}, we have
\begin{align*}
	&\sum_{j=1}^{p}f\left( \bgl_{j}\right)-p\int f(x)dF^{c_{n},H_{n}} \\
	&=\sum_{j=1}^{M}f\left( \bgl_{j}\right)+\sum_{j=M+1}^{p}f\left( \bgl_{j}\right)-p\int f(x)dF^{c_{n},H_{n}}\\
	&=\sum_{j=1}^{M}f\left( \bgl_{j}\right)+\sum_{j=1}^{p-M}f\left( \widetilde{\bgl}_{j}\right)-(p-M)\int f(x)dF^{c_{nM},H_{2n}}+\sum_{j=M+1}^{p}f\left( \bgl_{j}\right)-\sum_{j=1}^{p-M}f\left( \widetilde{\bgl}_{j}\right)\\
	&+(p-M)\int f(x)dF^{c_{nM},H_{2n}}-p\int f(x)dF^{c_{n},H_{n}}.
\end{align*}
Since Lemma \ref{lemma1} has shown the difference between $ (p-M)\int f(x)dF^{c_{nM},H_{2n}}$ and $p\int f(x)dF^{c_{n},H_{n}}$ is 0. 
Moreover, in Lemma \ref{lemma2} we have proved $$ \sum_{j=M+1}^{p}f\left( \bgl_{j}\right)-\sum_{j=1}^{p-M}f\left( \widetilde{\bgl}_{j}\right)=\frac{M}{2\pi i}\oint_{C}f\left(z \right)\frac{\underline{m}^{'}(z)}{\underline{m}(z)}dz+o_{p}(1). $$
It follows that
\begin{align*}
	&\sum_{j=1}^{p}f\left( \bgl_{j}\right)-p\int f(x)dF^{c_{n},H_{n}}\\
	=&\sum_{j=1}^{M}f\left( \bgl_{j}\right)+\sum_{j=1}^{p-M}f\left( \widetilde{\bgl}_{j}\right)
	-(p-M)\int f(x)dF^{c_{nM},H_{2n}}+\frac{M}{2\pi i}\oint_{C}f\left(z \right)\frac{\underline{m}^{'}(z)}{\underline{m}(z)}dz+ o_{p}(1),
\end{align*}
which yields
\begin{align}
	&\sum_{j=1}^{p}f\left( \bgl_{j}\right)-p\int f(x)dF^{c_{n},H_{n}}-\sum_{k=1}^{K}d_{k}f\left( \phi_{n}\left(\al_{k} \right) \right)-\frac{M}{2\pi i}\oint_{C}f\left(z \right)\frac{\underline{m}^{'}(z)}{\underline{m}(z)}dz\label{101}\\
	=&\sum_{j=1}^{M}f\left( \bgl_{j}\right)-\sum_{k=1}^{K}d_{k}f\left( \phi_{n}\left(\al_{k} \right) \right)+\sum_{j=1}^{p-M}f\left( \widetilde{\bgl}_{j}\right)-(p-M)\int f(x)dF^{c_{nM},H_{2n}}+ o_{p}(1). \label{100}
\end{align}

The analysis below is executed by dividing (\ref{100}) into two parts: (\uppercase\expandafter{\romannumeral1}) $ \sum_{j=1}^{M}f\left( \bgl_{j}\right)-\sum_{k=1}^{K}d_{k}f\left( \phi_{n}\left(\al_{k} \right) \right) $ and (\uppercase\expandafter{\romannumeral2}) $ \sum_{j=1}^{p-M}f\left( \widetilde{\bgl}_{j}\right)-(p-M)\int f(x)dF^{c_{nM},H_{2n}} $, where we ignore the impact of $ o_{p}(1) $ on the asymptotic distribution. Since we have derived the asymptotic distribution of part (\uppercase\expandafter{\romannumeral2}) in Lemma \ref{lemma3}, we only need to consider the asymptotic distribution of part (\uppercase\expandafter{\romannumeral1}) $ \sum_{j=1}^{M}f\left(\lambda_{j}\right)-\sum_{k=1}^{K}d_{k}f\left(\phi_{n}\left(\bbalp_{k} \right) \right) $. From the proof of Lemma \ref{lemma5}, $ \sum_{j=1}^{M}f\left( \bgl_{j}\right)-\sum_{k=1}^{K}d_{k}f\left(\phi_{n}\left(\bbalp_{k} \right)\right)  $ has the same limiting distribution as $ \sum_{k=1}^{K}\frac{\phi_{n}\left(\bbalp_{k} \right)}{\sqrt{n}}f'\left( \phi_{n}\left(\bbalp_{k} \right)\right)\sum_{j\in J_{k}}\gamma_{kj}  $. From Lemma \ref{lemma4}, we have $ \left(\gamma_{kj}, j\in J_{k} \right)'\stackrel{d}\rightarrow-\frac{1}{\theta_{k}}\left[\bgO_{\phi_{k}} \right]_{kk}  $, so $ \sum_{j\in J_{k}}\gamma_{kj}\stackrel{d}\rightarrow -\frac{1}{\theta_{k}}\mathrm {tr}\left[\bgO_{\phi_{k}} \right]_{kk}    $. Recall that $ \omega_{ij} $ is the element of $ \bgO_{\phi_{k}} $, and $ \mathrm {tr}\left[\bgO_{\phi_{k}} \right]_{kk} $ is the summation of the diagonal element, that is, $ \sum_{j\in J_{k}} \omega_{jj} $. Because the diagonal elements are i.i.d.,  $ \mathbb{E}\left(\sum_{j\in J_{k}} \omega_{jj}\right) =0  $, $ \mathrm{Var}\left(\sum_{j\in J_{k}} \omega_{jj}\right)=\sum_{j\in J_{k}}\mathrm{Var}\left(\omega_{jj} \right)+ \sum_{j_{1}\neq j_{2}}\mathrm{cov}\left(\omega_{j_{1}j_{1}}, \omega_{j_{2}j_{2}} \right)= \sum_{j\in J_{k}}\left( \left(\al_{x}+1 \right)\theta_{k}+\beta_{x}\mathcal{U}_{jjjj}\nu_{k}\right) +\sum_{j_{1}\neq j_{2}}\beta_{x}\mathcal{U}_{j_{1}j_{1}j_{2}j_{2}}\nu_{k}.  $ 

Therefore,
from Lemma \ref{lemma4}, we have that the asymptotic distribution of $ \sum_{j\in J_{k}}\gamma_{kj} $ is a Gaussian distribution with $$
\mathbb{E}\sum_{j\in J_{k}}\gamma_{kj}=0,$$ $$
s_{k}^{2}\triangleq \mathrm{Var}\left(\sum_{j\in J_{k}}\gamma_{kj}\right)=\frac{ \sum_{j\in J_{k}}\left( \left(\al_{x}+1 \right)\theta_{k}+\beta_{x}\mathcal{U}_{jjjj}\nu_{k}\right) +\sum_{j_{1}\neq j_{2}}\beta_{x}\mathcal{U}_{j_{1}j_{1}, j_{2}j_{2}}\nu_{k}}{\theta_{k}^{2}}, $$ and then, we directly derive that the mean function of $ \sum_{k=1}^{K}\frac{\phi_{n}\left(\bbalp_{k} \right)}{\sqrt{n}}f'\left( \phi_{n}\left(\bbalp_{k} \right)\right)\sum_{j\in J_{k}}\gamma_{kj}  $ is 0 and that its covariance function is
$$
\mathrm{Var}\left( Y_{f_{1}} \right)=\sum_{k=1}^{K}\frac{\phi_{n}^{2}\left(\bbalp_{k} \right)}{n}\left( f_{1}'\left( \phi_{n}\left(\bbalp_{k} \right)\right)\right) ^{2}s_{k}^{2}. 
$$

Finally, we focus on the asymptotic distribution of equation (\ref{100}). Because of Lemma \ref{lemma5}, the two LSSs are asymptotically independent; thus, the random variable 
$$ \varsigma_{1}^{-1}\left(Y_{1}-\mathbb{E}Y_{1} \right)\stackrel{d}\longrightarrow N\left(0,1 \right)  $$
with mean function $\mathbb{E}Y_{1}=\mu_{1}$ being
\begin{align*}
	&-\frac{\alpha_{x}}{2 \pi i}\cdot\oint_{\mathcal{C}}f_{1}(z)\frac{  c_{nM} \int \underline{m}_{2n0}^{3}(z)t^{2}\left(1+t \underline{m}_{2n0}(z)\right)^{-3} d H_{2n}(t)}{\left(1-c_{nM} \int \frac{\underline{m}_{2n0}^{2}(z) t^{2}}{\left(1+t \underline{m}_{2n0}(z)\right)^{2}} d H_{2n}(t)\right)\left(1-\alpha_{x} c_{nM} \int \frac{\underline{m}_{2n0}^{2}(z) t^{2}}{\left(1+t \underline{m}_{2n0}(z)\right)^{2}} d H_{2n}(t)\right) }dz \\
	&-\frac{\beta_{x}}{2 \pi i} \cdot \oint_{\mathcal{C}} f_{1}(z) \frac{c_{nM} \int \underline{m}_{2n0}^{3}(z) t^{2}\left(1+t \underline{m}_{2n0}(z)\right)^{-3} d H_{2n}(t)}{1-c_{nM} \int \underline{m}_{2n0}^{2}(z) t^{2}\left(1+t \underline{m}_{2n0}(z)\right)^{-2} d H_{2n}(t)} dz,  	
\end{align*}
and covariance function $\varsigma_{1}^{2}$ being 
\begin{align*}
	\sum_{k=1}^{K}\frac{\phi_{n}^{2}\left(\al_{k} \right)}{n}\left( f_{1}'\left(\phi_{n}\left(\al_{k} \right)\right)  \right)^{2}s_{k}^{2} -\frac{1}{4\pi^{2}}\oint_{\mathcal{C}_{1}}\oint_{\mathcal{C}_{2}}f_{1}\left(z_{1} \right)f_{1}\left(z_{2} \right)(\vartheta_{n})^{2}dz_{1}dz_{2},
\end{align*}
where $ \vartheta_{n}^{2} $ is defined in Lemma \ref{lemma3}. Therefore, the proof is finished.

\subsection{Proof of Theorem \ref{thm2}}
Similar to the proof of Theorem \ref{thm1}, we divide the LSSs into two parts. Different from the above analysis, in this section, we focus on the multidimensional case under Assumptions \ref{ass1} -\ref{ass6}. Recall that we defined
$$ G_{n}\left( x\right)=p\left[F^{\bbB}\left(x \right)-F^{c_{n},H_{n}}\left(x \right)   \right],  $$ 
$$ Y_{l}=  \int f_{l}\left(x \right)dG_{n}\left( x\right)-\sum_{k=1}^{K}d_{k}\left(\phi_{n}\left(\al_{k} \right)  \right)-\frac{M}{2\pi i}\oint_{\mathcal C}f_{l}\left(z \right)\frac{\underline{m}'(z)}{\underline{m}(z)}dz. $$
Because of equation (\ref{100}), the random vector $\left(  Y_{1},\dots, Y_{h} \right)  $ shares the same asymptotic distribution with the summation of two random vectors  $$ \left( \sum_{k=1}^{K}\frac{\phi_{n}\left(\bbalp_{k} \right)}{\sqrt{n}}f_{1}'\left( \phi_{n}\left(\bbalp_{k} \right)\right)\sum_{j\in J_{k}}\gamma_{kj} ,\dots,\sum_{k=1}^{K}\frac{\phi_{n}\left(\bbalp_{k} \right)}{\sqrt{n}}f_{h}'\left( \phi_{n}\left(\bbalp_{k} \right)\right)\sum_{j\in J_{k}}\gamma_{kj}  \right)$$ and  $$\left(   \sum_{j=1}^{p-M}f_{1}\left( \widetilde{\bgl}_{j}\right)-(p-M)\int f_{1}(x)dF^{c_{nM},H_{2n}} ,\dots, \sum_{j=1}^{p-M}f_{h}\left( \widetilde{\bgl}_{j}\right)-(p-M)\int f_{h}(x)dF^{c_{nM},H_{2n}} \right).    $$ 

First, we focus on the first random vector. Similar to the proof of Theorem \ref{thm1}, we derive that the mean function of the first random vector is 0 and that the covariance function is
$$
\mathrm{Cov}\left( Y_{f_{s}},Y_{f_{t}} \right)=\sum_{k=1}^{K}\frac{\phi_{n}^{2}\left(\bbalp_{k} \right)}{n} f_{s}'\left( \phi_{n}\left(\bbalp_{k} \right)\right)f_{t}'\left( \phi_{n}\left(\bbalp_{k} \right)\right)s_{k}^{2}\rightarrow \sum_{k=1}^{K}\varpi_{s}^{k}\varpi_{t}^{k}s_{k}^{2}, \quad n\rightarrow \infty,
$$
where $  \dfrac{\phi_{n}(\al_{k})}{\sqrt{n}}f_{l}^{\prime}(\phi_{n}(\al_{k}) )\rightarrow\varpi_{l}^{k}$ as $ n\rightarrow\infty $. 
Moreover, the asymptotic distribution of the second random vector is derived in \cite{10.1214/14-AOS1292}. Because of Lemma \ref{lemma5}, two random vectors are asymptotically independent; thus, the random vector
$$ \left( Y_{1}-\mathbb{E}Y_{1},\dots,Y_{h}-\mathbb{E}Y_{h}\right)'\stackrel{d}{\rightarrow}N_{h}\left(0, \bgO\right),   $$ 
with mean function $ \mathbb{E}Y_{l} $ is the same as $ \mu_{l} $, and the covariance matrix is $ \bgO $ with its entries
\begin{align*}
	\omega_{st}=\sum_{k=1}^{K}\varpi_{s}^{k}\varpi_{t}^{k}s_{k}^{2}+ \kappa_{st},
\end{align*} 
where \begin{align*}
	\kappa_{st}
	&=-\frac{1}{4 \pi^{2}} \oint_{\mathcal{C}_{1}} \oint_{\mathcal{C}_{2}} \frac{f_{s}\left(z_{1}\right) f_{t}\left(z_{2}\right)}{\left(\underline{m} {\left.\left(z_{1}\right)-\underline{m}\left(z_{2}\right)\right)^{2}}\right.} d \underline{m}\left(z_{1}\right) d \underline{m}\left(z_{2}\right) 
	-\frac{c \beta_{x}}{4 \pi^{2}} \oint_{\mathcal{C}_{1}} \oint_{\mathcal{C}_{2}} f_{s}\left(z_{1}\right) f_{t}\left(z_{2}\right)\\
	&\left[\int \frac{t}{\left(\underline{m}\left(z_{1}\right) t+1\right)^{2}}\right.
	\left.\times \frac{t}{\left(\underline{m}\left(z_{2}\right) t+1\right)^{2}} d H(t)\right] d \underline{m}\left(z_{1}\right) d \underline{m}\left(z_{2}\right)\\
	&\quad-\frac{1}{4 \pi^{2}} \oint_{\mathcal{C}_{1}} \oint_{\mathcal{C}_{2}} f_{s}\left(z_{1}\right) f_{t}\left(z_{2}\right)\left[\frac{\partial^{2}}{\partial z_{1} \partial z_{2}} \log \left(1-a\left(z_{1}, z_{2}\right)\right)\right] d z_{1} d z_{2},\\
	a\left(z_{1}, z_{2}\right)&=\alpha_{x}\left(1+\frac{\underline{m}\left(z_{1}\right) \underline{m}\left(z_{2}\right)\left(z_{1}-z_{2}\right)}{\underline{m}\left(z_{2}\right)-\underline{m}\left(z_{1}\right)}\right).
\end{align*} 
Then, we obtain the random vector $$ \left( \frac{Y_{1}-\mathbb{E}Y_{1}}{\sqrt{\sigma_{1}^{2}}},\dots,\frac{Y_{h}-\mathbb{E}Y_{h}}{\sqrt{\sigma_{h}^{2}}}\right)'\stackrel{d}{\rightarrow}N_{h}\left(0, \bPsi \right), 
$$ which has a mean function that is the same as that in Theorem \ref{thm1}, and variance function 
\begin{align*}
	\sigma_{l}^{2}&=\sum_{k=1}^{K}\frac{\phi_{n}^{2}(\al_{k})}{n}(f_{l}^{\prime }(\phi_{n}(\al_{k})))^{2} s_{k}^{2}+\kappa_{ll}, \quad l=1,\dots,h,
\end{align*} 
and the covariance matrix $  \bPsi =\left( \psi_{st}\right) _{h\times h} $ is the correlation coefficient matrix of random vector $ (Y_{1},\dots,Y_{h})^{\prime} $  with its entries
\begin{align*}
	\psi_{st}=\frac{\sum_{k=1}^{K}\varpi_{s}^{k}\varpi_{t}^{k}s_{k}^{2}+ \kappa_{st}}{\sqrt{\sum_{k=1}^{K}(\varpi_{s}^{k})^{2}s_{k}^{2}+ \kappa_{ss}}\sqrt{\sum_{k=1}^{K}(\varpi_{t}^{k})^{2}s_{k}^{2}+ \kappa_{tt}}},
\end{align*} 
Note that renormalization is necessary to guarantee that elements in the correlation coefficient matrix $ \bPsi $ are limited. Therefore, the proof is finished.

\subsection{Proof of Theorem \ref{thm3}}

The result under $ H_{0} $ is a direct result of applying Theorem 4.1 in \cite{10.1214/14-AOS1292}  using the substitution principle. Therefore, we omit the proof here. Next, we focus on the result under $ H_{1}. $

Recall that $$ G_{n}\left( x\right)=p\left[F^{\bbB}\left(x \right)-F^{c_{n},H_{n}}\left(x \right)   \right],  $$ $$  Y= \int f_{L}\left(x \right)dG_{n}\left( x\right)-\sum_{k=1}^{K}d_{k}f_{L}\left(\phi_{n}\left(\al_{k} \right)  \right)-\frac{M}{2\pi i}\oint_{\mathcal C}f_{L}\left(z \right)\frac{\underline{m}'(z)}{\underline{m}(z)}dz,$$
when $ f_{L}(x)=x-\log x-1. $ After some calculations, we obtain
\begin{gather}
	\nonumber\int f_{L}\left(x \right)dG_{n}\left( x\right)=\mathrm{tr}\bbB-\log \left|\bbB \right|-p-p\int f_{L}(x)dF^{c_{n},H_{n}}(x)=L-p\int f_{L}(x)dF^{c_{n},H_{n}}(x),\\
	p\int f_{L}(x)dF^{c_{n},H_{n}}(x)=(p-M)(1-\frac{c_{nM}-1}{c_{nM}} \log \left(1-c_{nM}\right)),\label{32}\\	
	\nonumber\sum_{k=1}^{K}d_{k}f_{L}\left(\phi_{n}\left(\al_{k} \right)  \right)=\sum_{k=1}^{K}d_{k}\left(\phi_{n}\left(\al_{k} \right)-\log \phi_{n}\left(\al_{k} \right)-1 \right),\\
	\frac{M}{2\pi i}\oint_{\mathcal C}f_{L}\left(z \right)\frac{\underline{m}'(z)}{\underline{m}(z)}dz= -M(c_{n}+\log(1-c_{n})),\label{31},
\end{gather} 
where (\ref{32}) is obtained from Lemma \ref{lemma1} and \cite{10.1214/09-AOS694}. For consistency, we present the proof of (\ref{31}) in Appendix \ref{appB}.
According to Theorem \ref{thm1}, when $ f_{L}(x)=x-\log x-1 $, we have
\begin{align*}
 	 \frac{L-p\int f_{L}(x)dF^{c_{n},H_{n}}(x)-\breve{\mu}_{L}}{ \sqrt{\breve{\varsigma}_{L}^{2}}}\stackrel{d}{\rightarrow}N(0,1),
\end{align*}	where the mean function is $\breve{\mu}_{L}=-\frac{\log \left(1-c_{nM}\right)}{2}\al_{x}+\frac{c_{nM}}{2} \beta_{x} 
 +\sum_{k=1}^{K}d_{k}\left( \phi_{n}\left({\al}_{k} \right)-\log\phi_{n}\left({\al}_{k} \right)-1 \right)-M(c_{n}+\log(1-c_{n}))$, and the covariance function $ \breve{\varsigma}_{L}^{2} $ is $\frac{\al_{x}+1}{2} (-2 \log \left(1-c_{nM}\right)-2 c_{nM})+\sum_{k=1}^{K}d_{k}\frac{2\left( \phi_{n}\left(\al_{k} \right)-1\right)^{4}}{n\phi_{n}^{2}\left(\al_{k} \right) } $. 

 Now, we consider the power of the hypothesis. Let $ a $ be the size of the hypothesis in Section \ref{section 5}; $ z_{a} $ is the upper $ a\% $ quantile of the standard Gaussian distribution $ \Phi$. Since
$$ P_{H_{0}}\left(\left| L\right| >w \right)=a=P_{H_{0}}\left(\frac{\left| L\right| -L_{0}}{\varsigma_{L}} >\frac{w-L_{0}}{\varsigma_{L}}\right),   $$ 
here, $ w $ is a threshold of the critical region $ L_{0}=p\int f_{L}(x)dF^{c_{n},H_{n}}+\mu_{L} $. Then, we have $ w=\varsigma_{L}z_{a}+L_{0}. $ For brevity, we use the notation $ L_{1}=(p-M)\int f_{L}(x)dF^{c_{nM},H_{2n}}   +\breve{\mu}_{L}. $ Therefore, the power of the hypothesis is
\begin{align*}
	P_{H_{1}}\left(\left| L\right| >w \right)&=P_{H_{1}}\left(\frac{\left| L\right| -L_{1}}{\breve{\varsigma}_{L}} >\frac{w-L_{1}}{\breve{\varsigma}_{L}}\right)\\
	&=\Phi\left(\frac{L_{1}-L_{0}-\varsigma_{L}z_{a}}{\breve{\varsigma}_{L}} \right)= \Phi\left( \frac{L_{1}-L_{0}}{\breve{\varsigma}_{L}}-z_{a}\frac{\varsigma_{L}}{\breve{\varsigma}_{L}}\right).
\end{align*}
When $ M=1 $, 
\begin{align*} L_{1}-L_{0}&=(p-1)(1-\frac{c_{n1}-1}{c_{n1}}\log(1-c_{n1}))-p(1-\frac{c_{n}-1}{c_{n}}\log(1-c_{n}))+\\&(\phi_{n}(\al_{1})-\log \phi_{n}(\al_{1})-1)-(c_{n}+\log(1-c_{n})  )-\frac{\log(1-c_{n1})}{2}\al_{x}+\frac{\log(1-c_{n})}{2}\al_{x}\\&+\frac{c_{n1}}{2}\beta_{x}-\frac{c_{n}}{2}\beta_{x},  
\end{align*}
after some elementary calculations, $ \phi(\al_{1})=\al_{1}+\frac{c\al_{1}}{\al_{1}-1}, $  we obtain as $ n\rightarrow \infty, $ $$ L_{1}-L_{0}\rightarrow-c+(\phi_{1}-\log \phi_{1}-1 ), $$
$$ \varsigma_{L}\rightarrow\sqrt{\frac{\al_{x}+1}{2} (-2\log(1-c)-2c)},   $$
$$ \breve{\varsigma}_{L}\rightarrow\sqrt{\frac{\al_{x}+1}{2} (-2\log(1-c)-2c)  +\frac{2}{n}\frac{(\phi_{1}-1)^{4}}{\phi_{1}^{2}}}.     $$
Therefore, the asymptotic power of LRT is $$  \Phi( \frac{ -c+  \left( \phi_{1}-\log\phi_{1}-1 \right)-z_{a}\sqrt{\frac{\al_{x}+1}{2} (-2\log(1-c)-2c) }  }{ \sqrt{\frac{\al_{x}+1}{2} (-2\log(1-c)-2c)  +\frac{2}{n}\frac{(\phi_{1}-1)^{4}}{\phi_{1}^{2}}}   }   )  .$$
Thus, the proof of Theorem \ref{thm3} is finished. 
\subsection{Proof of Theorem \ref{thm4}}
First, we focus on the results under $ H_{0} $. From Remark \ref{simple result}, we have
\begin{align}
I_{1}(f_{W})&=c,\label{I1}	\\
I_{2}(f_{W})&=c, \label{I2}\\
J_{1}(f_{W},f_{W})&=4c^{3}+2c^{2},\label{J1}\\
J_{2}(f_{W},f_{W})&=4c^{3},\label{J2},
\end{align}	
which then yields
\begin{align*}
	\mu_{W}&=\al_{x}I_{1}(f_{W})+\beta_{x}I_{2}(f_{W})=\al_{x}c+\beta_{x}c,\\
	\varsigma_{W}^{2}&=(\al_{x}+1)J_{1}(f_{W},f_{W})+\beta_{x}J_{2}(f_{W},f_{W})= (\al_{x}+1)(4c^{3}+2c^{2})+4\beta_{x}c^{3}.
\end{align*}
The results are still valid if $ c $ is replaced by $ c_{n} $. Moreover, the center term
 \begin{align}\label{center term}
	\int f_{W}(x) dF^{c_{n},H_{n}} = c_{n},	
\end{align}	
is a direct result of Lemma 2.2 in \cite{wang2013sphericity}.
The proofs of (\ref{I1}), (\ref{I2}), (\ref{J1}) and (\ref{J2}) are presented in Appendix \ref{appB}. Therefore, from \cite{10.1214/14-AOS1292} or \cite{wang2013sphericity}, we have
\begin{align*}
	\frac{W-p\int f_{W}(x)dF^{c_{n},H_{n}}-\mu_{W}  }{\sqrt{\varsigma_{W}^{2}}}\stackrel{d}{\longrightarrow} N(0,1).
\end{align*}
Then, we focus on the results under $ H_{1} $. Note that
\begin{align*}
Y= \int f_{W}\left(x \right)dG_{n}\left( x\right)-\sum_{k=1}^{K}d_{k}f_{W}\left(\phi_{n}\left(\al_{k} \right)  \right)-\frac{M}{2\pi i}\oint_{\mathcal C}f_{W}\left(z \right)\frac{\underline{m}'(z)}{\underline{m}(z)}dz.
\end{align*}	
After some calculations, we obtain
\begin{gather}
	\nonumber\int f_{W}\left(x \right)dG_{n}\left( x\right)=\mathrm{tr}(\bbB-\bbI_{p})^{2}-p\int f_{W}(x)dF^{c_{n},H_{n}}=W-p\int f_{W}(x)dF^{c_{n},H_{n}},\\
	p\int f_{W}(x)dF^{c_{n},H_{n}}=(p-M)\int f_{W}(x)dF^{c_{nM},H_{2n}}=(p-M)c_{nM}\\
	\nonumber\sum_{k=1}^{K}d_{k}f_{W}\left(\phi_{n}\left(\al_{k} \right)  \right)=\sum_{k=1}^{K}d_{k}\left(\phi_{n}^{2}\left(\al_{k} \right)-2\phi_{n}\left(\al_{k} \right)+1 \right),\\
	\frac{M}{2\pi i}\oint_{\mathcal C}f_{W}\left(z \right)\frac{\underline{m}'(z)}{\underline{m}(z)}dz= -M c_{n}^{2}
	\label{l_{n}}
\end{gather} 
For consistency, we present the proof of (\ref{l_{n}}) in Appendix \ref{appB}. Therefore, from Theorem \ref{thm1}, we have
\begin{align*}
	\dfrac{W-(p-M)\breve{\ell}_{W}-\breve{\mu}_{W}}{\sqrt{\breve{\varsigma}^{2}_{W}}}\stackrel{d}{\longrightarrow} N(0,1),
\end{align*}	
where 
\begin{align*}
	\breve{\ell}_{W}&=c_{nM},~~~
	\breve{\mu}_{W}=\al_{x}c_{nM}+\beta_{x}c_{nM}+\sum_{k=1}^{K}d_{k}\left( \phi_{n}^{2}\left({\al}_{k} \right)-2\phi_{n}\left({\al}_{k} \right)+1 \right)-M c_{n}^{2}, \\
\breve{\varsigma}^{2}_{W}&= (\al_{x}+1)(4c_{nM}^{3}+2c_{nM}^{2})+4\beta_{x}c_{nM}^{3}+\sum_{k=1}^{K}d_{k}\frac{8\left( \phi_{n}\left(\al_{k} \right)-1\right)^{4}}{n}.
\end{align*}
Moreover, the power analysis for NT is similar to that for the LR; thus, we omit the detailed proof here.
Therefore, the proof of Theorem \ref{thm4} is complete.

\begin{appendix}

\section{Some derivations and calculations}\label{appB}

This section contains proof of formulas stated in the proof of Theorems \ref{thm3} and \ref{thm4}, and we begin by deriving formula (\ref{31}).
 First, we consider $ \oint_{\mathcal C}f_{L}\left(z \right)\frac{\underline{m}'(z)}{\underline{m}(z)}dz $. 
\begin{align}
	&\nonumber\oint_{\mathcal C}f_{L}\left(z \right)\frac{\underline{m}'(z)}{\underline{m}(z)}dz
	=\nonumber\oint_{\mathcal C}f_{L}\left(z \right)d\log\underline{m}\left(z \right)=\nonumber-\oint_{\mathcal C}f_{L}^{'}\left(z \right)\log\underline{m}\left(z \right)dz \\
	=&\nonumber\int_{a(c)}^{b(c)}f_{L}^{'}\left(z \right)\left[\log\underline{m}(x+i\varepsilon)-\log\underline{m}(x-i\varepsilon) \right]dx\\
	=&2i 	\int_{a(c)}^{b(c)}f_{L}^{'}\left(z \right) \Im \log\underline{m}(x+i\varepsilon)dx \label{26}
\end{align}	
Here, $a(c)=(1-\sqrt{c})^{2}  $ and $b(c)=(1+\sqrt{c})^{2}  $. Since $$\underline{m}\left(z \right)=-\frac{1-c}{z}+cm\left(z \right),  $$
under $ H_{1} $, we have $$ \underline{m}\left(z \right)=\frac{-(z+1-c)+\sqrt{(z-1-c)^{2}-4c}}{2z} . $$
As $z \rightarrow x \in$ $[a(c), b(c)]$, we obtain
$$ \underline{m}\left(x \right)= \frac{-(x+1-c)+\sqrt{4c-(x-1-c)^{2}}i}{2x}.$$
Therefore,
\begin{align*}
	&\int_{a(c)}^{b(c)}f_{L}^{'}\left(z \right) \Im \log\underline{m}(x+i\varepsilon)dx\\
	&=\int_{a(c)}^{b(c)} f_{L}^{'}(x)\tan ^{-1}\left(\frac{\sqrt{4 c-(x-1-c)^{2}}}{-(x+1-c)}\right) d x\\
	&=\left[\left.\tan ^{-1}\left(  \frac{\sqrt{4 c-(x-1-c)^{2}}}{-(x+1-c)}\right)  f_{L}(x)\right|_{a(c)} ^{b(c)}-\int_{a(c)}^{b(c)} f_{L}(x) d \tan ^{-1}\left(\frac{\sqrt{4 c-(x-1-c)^{2}}}{-(x+1-c)}\right)\right].
\end{align*}
It is easy to verify that the first term is $ 0 $, and we now focus on the second term,
\begin{align}
	&\nonumber\int_{a(c)}^{b(c)} f_{L}(x) d \tan ^{-1}\left(\frac{\sqrt{4 c-(x-1-c)^{2}}}{-(x+1-c)}\right)\\
	&=\int_{a(c)}^{b(c)} \frac{\left(x-\log x-1 \right)}{1+\frac{4c-(x-1-c)^{2}}{(x+1-c)^{2}}}\cdot\frac{\sqrt{4c-(x-1-c)^{2}}+\frac{(x-1-c)(x+1-c)}{\sqrt{4c-(x-1-c)^{2}}}}{(x+1-c)^{2}}dx.\label{27}
\end{align}
By substituting $ x=1+c-2\sqrt{c}\cos(\theta) $, we obtain
\begin{align}
	(\ref{27})&=\nonumber\frac{1}{2}\int_{0}^{2\pi}\left(1+c-2\sqrt{c}\cos(\theta)-\log\left(1+c-2\sqrt{c}\cos(\theta) \right)-1  \right)\frac{c-\sqrt{c}\cos(\theta) }{1+c-2\sqrt{c}\cos(\theta)} d\theta\\
	&=\nonumber\frac{1}{2}\int_{0}^{2\pi}\left[1-\frac{\log\left(1+c-2\sqrt{c}\cos(\theta) \right)+1}{1+c-2\sqrt{c}\cos(\theta)}  \right] \left(c-\sqrt{c}\cos(\theta) \right)d\theta\\
	&=\frac{1}{2}\int_{0}^{2\pi}\left(c-\sqrt{c}\cos(\theta) \right)d\theta-\frac{1}{2}\int_{0}^{2\pi}\frac{\log\left(1+c-2\sqrt{c}\cos(\theta) \right)}{1+c-2\sqrt{c}\cos(\theta)}\left( c-\sqrt{c}\cos(\theta)\right)  d\theta-\label{28}\\
	&\nonumber\frac{1}{2}\int_{0}^{2\pi}\frac{c-\sqrt{c}\cos(\theta)}{1+c-2\sqrt{c}\cos(\theta)}d\theta
\end{align}
It is easy to obtain that the first term of (\ref{28}) is $ \pi c $; then, we consider the second term. By substituting $ \cos\theta=\frac{z+z^{-1}}{2} $, we turn it into a contour integral on $ \left|z \right|=1  $
\begin{align*}
	&\frac{1}{2}\int_{0}^{2\pi}\frac{\log\left(1+c-2\sqrt{c}\cos(\theta) \right)}{1+c-2\sqrt{c}\cos(\theta)}\left( c-\sqrt{c}\cos\theta\right)  d\theta\\
	&=\frac{1}{2} \oint_{\left| z\right| =1} \log |1-\sqrt{c} z|^{2} \cdot \frac{c-\sqrt{c} \frac{z+z^{-1}}{2}}{1+c-2\sqrt{ c} \cdot \frac{z+z^{-1}}{2}} \frac{d z}{i z}\\
	&=\frac{1}{4 i} \oint_{\left| z\right| =1} \log |1-\sqrt{c} z|^{2} \cdot \frac{2c z-\sqrt{c}\left(z^{2}+1\right)}{(z-\sqrt{c})(-\sqrt{c}z+1)z} d z
\end{align*}
 When $ c<1 $, $ 0 $ and $\sqrt{c}$ are poles, by using the residue theorem, the integral is $ -\pi \log(1-c)$. The same argument also holds for the third term, and the integral is $ 0 $ after some calculation.

Therefore, $$ \frac{M}{2\pi i}\oint_{\mathcal C}f_{L}\left(z \right)\frac{\underline{m}'(z)}{\underline{m}(z)}dz= -M(c+\log(1-c)), $$ 
and the result is still valid if $ c $ is replaces $ c_{n} $; therefore, formula (\ref{31}) holds.

Now, we prove (\ref{l_{n}}). 
Since $ z=-\frac{1}{\underline{m}}+\frac{c}{1+\underline{m}} $, we have, for $ c>1 $,
\begin{align*}
	\oint_{\mathcal C}f_{W}\left(z \right)\frac{\underline{m}'(z)}{\underline{m}(z)}dz=\oint_{\mathcal C_{1}}f_{W}\left(z \right)\frac{\underline{m}'(z)}{\underline{m}(z)}dz+\oint_{\mathcal C_{2}}f_{W}\left(z \right)\frac{\underline{m}'(z)}{\underline{m}(z)}dz,
\end{align*}
where $\mathcal C_{1}  $ is a contour that includes the interval $ ( (1-\sqrt{c})^{2}, (1+\sqrt{c})^{2}) $, and $\mathcal C_{2}  $ is a contour that includes the origin. Using $ \mathcal C_{\underline{m}} $ to denote the contour of $ \underline{m} $, we obtain
\begin{align*}
	&\oint_{\mathcal C_{1}}f_{W}\left(z \right)\frac{\underline{m}'(z)}{\underline{m}(z)}dz=\oint_{\mathcal C_{\underline{m}}}(-\frac{1}{\underline{m}}+\frac{c}{1+\underline{m}}-1)^{2}\frac{\underline{m}'(z)}{\underline{m}(z)}\frac{dz}{d\underline{m}}d\underline{m}\\
	=&\oint_{\mathcal C_{\underline{m}}}(-\frac{1+\underline{m}}{\underline{m}}+\frac{c}{1+\underline{m}})^{2}\frac{1}{\underline{m}}d\underline{m}=\oint_{\mathcal C_{\underline{m}}}( \frac{(1+\underline{m})^{2}}{\underline{m}^{3}}+\frac{c^{2}}{(1+\underline{m})^{2}\underline{m}}-\frac{2c}{\underline{m}^{2}}  ) d\underline{m}
\end{align*}	 
Since the $ z $ contour cannot enclose the origin, neither can the resulting $ \underline{m} $ contour. Thus, the only pole is $ -1 $, the residue is $ -c^{2} $ by residue theorem, and we obtain the integral as $ -2\pi ic^{2}. $ 

Then, we focus on the second integral $ \oint_{\mathcal C_{2}}f_{W}\left(z \right)\frac{\underline{m}'(z)}{\underline{m}(z)}dz. $ When $ z=0, $ we obtain $ \underline{m}=\frac{1}{c-1} $; since $ c>1,$ $ \frac{1}{c-1}>0. $ Both $ \underline{m}=0 $ and $ \underline{m}=-1 $ are not in the contour. Thus, the integrand $ ( \frac{(1+\underline{m})^{2}}{\underline{m}^{3}}+\frac{c^{2}}{(1+\underline{m})^{2}\underline{m}}-\frac{2c}{\underline{m}^{2}}  ) $ is analytic in the contour. The integral is $ 0 $. 
Therefore, when $ c>1 $,  $ \frac{M}{2\pi i}\oint_{\mathcal C}f_{W}\left(z \right)\frac{\underline{m}'(z)}{\underline{m}(z)}dz=-M c^{2}$. When $ c<1, $, the contour integral $ \oint_{\mathcal C}f_{W}\left(z \right)\frac{\underline{m}'(z)}{\underline{m}(z)}dz $ reduces to $ \oint_{\mathcal C_{1}}f_{W}\left(z \right)\frac{\underline{m}'(z)}{\underline{m}(z)}dz $, and the result is also the same as above. When $ c=1 $, the result is still true by continuity in $ c $. The results above are still valid if $ c $  replaces $ c_{n}. $ Therefore, the proof of (\ref{l_{n}}) is complete.

We now detail the calculations of (\ref{I1}), (\ref{I2}), (\ref{J1}), and (\ref{J2}). They are all based on the formula provided in Remark \ref{simple result} and repeated use of residue theorem.\\
\textit{Proof of (\ref{I1})}:
\begin{align*}
	I_{1}(f_{W})=&\lim _{r \downarrow 1} \frac{1}{2 \pi i} \oint_{|z|=1} \left(\left|1+\sqrt{c} z\right|^{2}-1\right)^{2}\left[\frac{z}{z^{2}-r^{-2}}-\frac{1}{z}\right] d z,\\
	=&\lim _{r \downarrow 1} \frac{1}{2 \pi i} \oint_{|z|=1} \frac{(\sqrt{c}+cz+\sqrt{c}z^{2})^{2}}{z^{2}}\left[\frac{z}{z^{2}-r^{-2}}-\frac{1}{z}\right] d z,\\
	=&\lim _{r \downarrow 1} \frac{1}{2 \pi i} \oint_{|z|=1} \frac{(\sqrt{c}+cz+\sqrt{c}z^{2})^{2}}{z}\frac{1}{(z+\frac{1}{r})(z-\frac{1}{r})} d z\\
	&- \lim _{r \downarrow 1} \frac{1}{2 \pi i} \oint_{|z|=1} \frac{(\sqrt{c}+cz+\sqrt{c}z^{2})^{2}}{z^{3}}d z.
\end{align*}
In the first integral, the poles are $ 0 $, $ -\frac{1}{r} $ and $ \frac{1}{r} $. The residues are $ -r^{2}c, \dfrac{(\sqrt{c}-\frac{c}{r}+\frac{\sqrt{c}}{r^{2}})^{2}}{\frac{2}{r^{2}}}, and \dfrac{(\sqrt{c}+\frac{c}{r}+\frac{\sqrt{c}}{r^{2}})^{2}}{\frac{2}{r^{2}}}. $ In the second integral, the pole is $ 0 $, and the residue is $ c^{2}+2c. $ Then, by using residue theorem, the first part of $ I_{1}(f_{W}) $ is $ 3c+c^{2} $, and the second part is $ c^{2}+2c $; thus, $ I_{1}(f_{W})=c $.\\
\textit{Proof of (\ref{I2})}:
\begin{align*}
	I_{2}(f_{W})&=\frac{1}{2 \pi i} \oint_{|z|=1} \left(\left|1+\sqrt{c} z\right|^{2}-1\right)^{2} \frac{1}{z^{3}} d z,\\
	&=\frac{1}{2 \pi i} \oint_{|z|=1}  \frac{(\sqrt{c}+cz+\sqrt{c}z^{2})^{2}}{z^{5}} d z,
\end{align*}	
The pole is $ 0 $, and the residue is $ c $; then, by using residue theorem, we obtain $ I_{2}(f_{W})=c $.\\
\textit{Proof of (\ref{J1})}:
\begin{align*}
	J_{1}(f_{W}, f_{W})&=\lim _{r \downarrow 1} \frac{-1}{4 \pi^{2}} \oint_{\left|z_{1}\right|=1} \oint_{\left|z_{2}\right|=1} \frac{\left(\left|1+\sqrt{c} z_{1}\right|^{2}-1\right)^{2} \left(\left|1+\sqrt{c} z_{2}\right|^{2}-1\right)^{2}}{\left(z_{1}-r z_{2}\right)^{2}} d z_{1} d z_{2},\\
	&=\lim _{r \downarrow 1} \frac{-1}{4 \pi^{2}}  \oint_{\left|z_{2}\right|=1}\frac{(\sqrt{c}+cz_{2}+\sqrt{c}z_{2}^{2} )^{2}}{z_{2}^{2}}\oint_{\left|z_{1}\right|=1}\frac{(\sqrt{c}+cz_{1}+\sqrt{c}z_{1}^{2} )^{2}}{z_{1}^{2}(z_{1}-rz_{2})^{2}   }dz_{1}dz_{2}.
\end{align*}	
First, we focus on $ \oint_{\left|z_{1}\right|=1}\frac{(\sqrt{c}+cz_{1}+\sqrt{c}z_{1}^{2} )^{2}}{z_{1}^{2}(z_{1}-rz_{2})^{2}   }dz_{1} $. Since $ \left|z_{1} \right|=\left|rz_{2} \right|= \left|r \right|>1, $, the pole is only $ 0 $. By using the residue theorem, the integral is $2\pi i( \frac{2c^{\frac{3}{2}}   }{r^{2} }\frac{ (\sqrt{c}+cz_{2}+\sqrt{c}z_{2}^{2})^{2}}{z_{2}^{4}}+ \frac{2c}{r^{3} }\frac{ (\sqrt{c}+cz_{2}+\sqrt{c}z_{2}^{2})^{2}}{z_{2}^{5}})$. Therefore,
\begin{align*}
	J_{1}(f_{W}, f_{W})&=\lim _{r \downarrow 1} \frac{-1}{4 \pi^{2}} \oint_{\left|z_{2}\right|=1}2\pi i \frac{2c^{\frac{3}{2}}   }{r^{2} }\frac{ (\sqrt{c}+cz_{2}+\sqrt{c}z_{2}^{2})^{2}}{z_{2}^{4}}dz_{2}+\\
	&\lim _{r \downarrow 1} \frac{-1}{4 \pi^{2}} \oint_{\left|z_{1}\right|=1}2\pi i \frac{2c}{r^{3} }\frac{ (\sqrt{c}+cz_{2}+\sqrt{c}z_{2}^{2})^{2}}{z_{2}^{5}}dz_{2}.
\end{align*}	 
For the first integral above, the pole is $ 0 $, and the residue is $ 2c^{\frac{3}{2}}. $ Then, by using residue theorem, the integral is $ 2c^{\frac{3}{2}} $; thus, the first part of $ J_{1}(f_{W}, f_{W}) $ is $ 4c^{3} $. For the second integral, the pole is also $ 0 $, and the residue is $ c $. Similarly, the second integral is $ 2c^{2} $. Therefore,  $J_{1}(f_{W}, f_{W})= 4c^{3}+ 2c^{2}$.\\
\textit{Proof of (\ref{J2})}:
\begin{align*}
	J_{2}(f_{W}, f_{W})&=-\frac{1}{4 \pi^{2}} \oint_{\left|z_{1}\right|=1} \frac{(\left(\left|1+\sqrt{c} z_{1}\right|^{2}\right)-1)^{2}}{z_{1}^{2}} d z_{1} \oint_{\left|z_{2}\right|=1} \frac{(\left(\left|1+\sqrt{c} z_{2}\right|^{2}\right)-1)^{2}}{z_{2}^{2}} d z_{2}.
\end{align*}	
First, we calculate the first integral $ \oint_{\left|z_{1}\right|=1} \frac{(\left(\left|1+\sqrt{c} z_{1}\right|^{2}\right)-1)^{2}}{z_{1}^{2}} d z_{1} $. Since
\begin{align*}
	\oint_{\left|z_{1}\right|=1} \frac{(\left(\left|1+\sqrt{c} z_{1}\right|^{2}\right)-1)^{2}}{z_{1}^{2}} d z_{1}
	=\oint_{\left|z_{1}\right|=1} \frac{ (\sqrt{c}+cz_{1}+\sqrt{c}z_{1}^{2})^{2} }{z_{1}^{4}} d z_{1},
\end{align*}	
the pole is $ 0 $, and the residue is $ 2c^{\frac{3}{2}} $. Then, by using the residue theorem, the integral is $ 4\pi ic^{\frac{3}{2}} $. The same calculations also hold for the second integral and $ \oint_{\left|z_{2}\right|=1} \frac{(\left(\left|1+\sqrt{c} z_{2}\right|^{2}\right)-1)^{2}}{z_{2}^{2}} d z_{2}=4\pi ic^{\frac{3}{2}} $. Therefore, $ J_{2}(f_{W}, f_{W})=4c^{3} $. The proof is finished.


\end{appendix}

\end{document}